\newcommand{\PP}{{\mathcal P}}
\newcommand{\ze}{\zeta}
\newcommand{\zs}{\zeta(s)}
\newcommand{\GG}{{\mathcal G}}
\newcommand{\si}{\sigma}
\newcommand{\ff}{\varphi}
\newcommand{\G}{{\mathcal G}}
\newcommand{\A}{{\mathcal A}}
\newcommand{\Cm}{{\mathcal C}_{-}}
\newcommand{\II}{{\mathcal I}}
\newcommand{\Z}{{\mathcal Z}}
\newcommand{\RR}{{\mathbb R}}
\newcommand{\CC}{{\mathbb C}}
\newcommand{\NN}{{\mathbb N}}
\newcommand{\N}{{\mathcal N}}
\newcommand{\R}{{\mathcal R}}
\newcommand{\al}{{\alpha}}
\newcommand{\ve}{{\varepsilon}}
\newcommand{\diam}{{\rm diam\,}}
\newcommand{\de}{\delta}
\reversemarginpar \setlength{\marginparwidth}{2.5in}
\begin{document}

\title{A Riemann-von Mangoldt-type formula for the \\ distribution of Beurling primes}

\author{\large{Szil\' ard Gy. R\' ev\' esz}$^1$}

\affiliation[1]{Alfréd Rényi Institute of Mathematics, 13 Reáltanoda street Budapest, Hungary 1053}

\newcommand{\CorrespondingAuthorEmail}{revesz@renyi.hu}

\newcommand{\KeyWords}{Beurling zeta function, analytic
continuation, arithmetical semigroups, Beurling prime number
formula, zero of the Beurling zeta function, Riemann-von Mangoldt formula}

\newcommand{\PrimarySubjectClass}{11M41}
\newcommand{\SecondarySubjectClass}{11F66, 11M36, 30B50, 30C15}

\begin{abstract}
In this paper we work out a Riemann-von Mangoldt type formula for the summatory function $\psi(x):=\sum_{g\in\GG, |g|\le x} \Lambda_{\GG}(g)$, where $\GG$ is an arithmetical semigroup (a Beurling generalized system of integers) and $\Lambda_{\GG}$ is the corresponding von Mangoldt function attaining $\log|p|$  for $g=p^k$ with a prime element $p\in \GG$ and zero otherwise.
On the way towards this formula, we prove explicit estimates on the Beurling zeta function $\zeta_{\GG}$, belonging to $\GG$, to the number of zeroes of $\zeta_\GG$ in various regions, in particular within the critical strip where the analytic continuation exists, and to the magnitude of the logarithmic derivative of $\zeta_\GG$, under the sole additional assumption that Knopfmacher's Axiom A is satisfied. We also construct a technically useful broken line contour to which the technic of integral transformation can be well applied. The whole work serves as a first step towards a further study of the distribution of zeros of the Beurling zeta function, providing appropriate zero density and zero clustering estimates, to be presented in the continuation of this paper.
\end{abstract}

\maketitle



\section{Introduction}

Beurling's theory fits well to the study of several mathematical
structures. A vast field of applications of Beurling's theory is
nowadays called \emph{arithmetical semigroups}, which are
described in detail e.g. by Knopfmacher, \cite{Knopf}.

Here $\G$ is a unitary, commutative semigroup, with a countable set
of indecomposable generators, called the \emph{primes} of $\G$ and
denoted usually as $p\in\PP$, (with $\PP\subset \G$ the set of all
primes within $\G$), which freely generate the whole of $\G$: i.e.,
any element $g\in \G$ can be (essentially, i.e. up to order of
terms) uniquely written in the form $g=p_1^{k_1}\cdot \dots \cdot
p_m^{k_m}$: two (essentially) different such expressions are
necessarily different as elements of $\G$, while each element has
its (essentially) own unique prime decomposition.

Moreover, there is a \emph{norm} $|\cdot|~: \G\to \RR_{+}$ so that
the following hold. Firstly, the image of $\G$, $|\G|\subset
\RR_{+}$ is discrete, i.e. any finite interval of $\RR_{+}$ can contain
the norm of only a finite number of elements of $\G$; thus the
function
\begin{equation}\label{Ndef}
{\N}(x):=\# \{g\in \G~:~ |g| \leq x\}
\end{equation}
exists as a finite, nondecreasing, nonnegative integer valued
function on $\RR_{+}$.

Second, the norm is multiplicative, i.e. $|g\cdot h| = |g| \cdot
|h|$; it follows that for the unit element $e$ of $\G$ $|e|=1$, and
that all other elements $g \in \G$ have norms strictly larger than 1
(otherwise the different elements $g^m$ -- which are really
different by their different unique prime decomposition -- would
have a sequence of norms identically $1$ or converging to $0$).

There are many structures fitting into this general theory, e.g.
the category of all finite Abelian groups, the ideals ${\mathcal
I}_K$ of an algebraic number field $K$ over the rational numbers,
algebraic structures like e.g. semisimple rings, graphs of some
required properties, finite pseudometrizable topological spaces,
symmetric Riemannian manifolds, compact Lie groups etc., see
Knopfmacher's book, pages 11-22.

In this work we assume the so-called \emph{"Axiom A"} of
Knopfmacher see pages 73-79 of his fundamental book \cite{Knopf},
in its normalized form.

\begin{definition} It is said that ${\N}$ (or, loosely speaking, $\ze$)
satisfies \emph{Axiom A} -- more precisely, Axiom
$A(\kappa,\theta)$ with the suitable constants $\kappa>0$ and
$0<\theta<1$ -- if we have\footnote{The usual formulation uses the more natural version $\R(x):= \N(x)-\kappa x$. However, our version is more convenient with respect to the initial values at 1, as we here have $\R(1-0)=0$. All respective integrals of the form $\int_0$ will be understood as integrals from $1-0$, and thus we can avoid considering endpoint values in the partial integration formulae. Alternatively, we could have taken also $\N(x)$ left continuous, and integrals from 1 in the usual sense: also with this convention we would have $\R(1)=0$.} for the remainder term
\begin{align}\label{Athetacondi}\notag & \R(x):= \N(x)-\kappa (x-1)
\\
\textrm{the estimate} \notag
\\ & \left| \R(x) \right|  \leq A x^{\theta} \quad (\kappa, A > 0, ~ 0<\theta<1 ~ \textrm{constants}, ~ x \geq 1 ~ \textrm{arbitrary}).
\end{align}
\end{definition}

It is clear that under Axiom A the Beurling zeta function
\begin{equation}\label{zetadef}
\ze(s):=\ze_{\G}(s):=\int_1^{\infty} x^{-s} d\N(x) = \sum_{g\in\G} \frac{1}{|g|^s}
\end{equation}
admits a meromorphic, essentially analytic continuation
$\kappa\frac{1}{s-1}+\int_1^{\infty} x^{-s} d\R(x)$ up to $\Re s >\theta$
with only one, simple pole at 1.


The Beurling zeta function \eqref{zetadef} can be used to express
the generalized von Mangoldt function
\begin{equation}\label{vonMangoldtLambda}
\Lambda (g):=\Lambda_{\G}(g):=\begin{cases} \log|p| \quad \textrm{if}\quad g=p^k,
~ k\in\NN ~~\textrm{with some prime}~~ p\in\G\\
0 \quad \textrm{if}\quad g\in\G ~~\textrm{is not a prime power in} ~~\G
\end{cases}
\end{equation}
as coefficients of the logarithmic derivative of the zeta function
\begin{equation}\label{zetalogder}
-\frac{\zeta'}{\zeta}(s) = \sum_{g\in \G} \frac{\Lambda(g)}{|g|^s}.
\end{equation}

The Beurling theory of generalized primes is mainly concerned with
the analysis of the summatory function
\begin{equation}\label{psidef}
\psi(x):=\psi_{\G}(x):=\sum_{g\in \G,~|g|\leq x} \Lambda (g).
\end{equation}

Apart from generality and applicability to e.g. distribution of
prime ideals in number fields, the interest in these things were
greatly boosted by a construction of Diamond, Montgomery
and Vorhauer \cite{DMV}. They basically showed that under Axiom A
the Riemann hypothesis may still fail: moreover, nothing better
than the most classical zero-free region and error term of
\begin{equation}\label{classicalzerofree}
\zeta(s) \ne 0 \qquad \text{whenever}~~~ s=\sigma+it, ~~ \sigma >
1-\frac{c}{\log t},
\end{equation}
and
\begin{equation}\label{classicalerrorterm}
\psi(x)=x +O(x\exp(-c\sqrt{\log x})
\end{equation}
follows from \eqref{Athetacondi} at least if $\theta>1/2$.

Therefore, Vinogradov mean value theorems on trigonometric sums
and many other stuff are certainly irrelevant in this generality,
and for Beurling zeta functions a careful revival of the
combination of "ancient-classical" methods and "elementary"
arguments can only be implemented.

In the classical case of prime number distribution, as well as regarding some extensions to primes in arithmetical progressions and distribution of prime ideals in algebraic number fields, the connection between location and distribution of zeta-zeroes and oscillatory behavior in the remainder term of the prime number formula $\psi(x)\thicksim x$ is well understood. On the other hand in the generality of Beurling primes and zeta function, investigations so far were focused on mainly three directions. First, better and better, minimal conditions were sought in order to have a Chebyshev type formula $x\ll \psi(x) \ll x$, see e.g. \cite{Vindas12, DZ-13-2, DZ-13-3}. Understandably, as in the classical case, this relation requires only an analysis of the $\zeta$ function of Beurling in, and on the boundary of the convergence halfplane. Second, conditions for the prime number theorem to hold, were sought see e.g. \cite{Beur, K-98, DebruyneVindas-PNT, DZ-17,  {Zhang15-IJM}}. Again, this relies on the boundary behavior of $\zeta$ on the one-line $\si=1$. Third, rough (as compared to our knowledge in the prime number case) estimates and equivalences were worked out in the analysis of the connection between $\zeta$-zero distribution and error term behavior for $\psi(x)$ see e.g. \cite{H-5}, \cite{H-20}. Further, examples were constructed for arithmetical semigroups with very "regular" (such as satisfying the Riemann hypothesis RH and error estimates $\psi(x)=x+O(x^{1/2+\varepsilon})$) and very "irregular" (such as having no better zero-free regions than \eqref{classicalzerofree} and no better asymptotic error estimates than \eqref{classicalerrorterm}) behavior and zero- or prime distribution, see e.g. \cite{DMV}, \cite{Zhang7}, \cite{H-15}, \cite{BrouckeDebruyneVindas}. Here we must point out that the above citations are just examples, and are far from being a complete description of the otherwise formidable literature\footnote{E.g. a natural, but somewhat different direction, going back to Beurling himself, is the study of analogous questions in case the assumption of Axiom A is weakened to e.g. an asymptotic condition on $N(x)$ with a product of $x$ and a sum of powers of $\log x$, or sum of powers of $\log x$ perturbed by almost periodic polynomials in $\log x$, or $N(x)-cx$ periodic, see \cite{Beur}, \cite{Zhang93}, \cite{H-12}, \cite{RevB}.}. Throughout analysis of these directions as well as for much more information the reader may consult the monograph \cite{DZ-16}.

With this work we start a systematic analysis of the connection between these two aspects of a Beurling generalized number system or arithmetical semigroup. As starting point, here we work out a number of relatively standard initial estimates on the behavior of the Beurling zeta function $\zs$ and its zeroes, assuming only the above mentioned Axiom A. Although these results bring no surprises, and are known for more than a century for the classical Riemann zeta function, it seems that until now there is no systematic presentation of them for the general Beurling case. Most probably some, if not all, can be found in some form or in another in various papers, but for our long-range goal with this study it seemed to be rather inconvenient to try to reference out, one by one, in different forms and various versions, all these technical auxiliary results. Moreover, we put an emphasis on getting explicit, relatively sharp constants, so that in any further research, in particular if explicit, numerical results would be needed, the future researcher could draw from our efforts. With the single assumption formulated in Axiom A, we arrive at a Riemann-von Mangoldt type formula, which, in principle, explains very clearly, even if not explicitly, the connection between the location of zeta-zeros and the oscillation of the prime number formula. From here, one can better
explain the expectations, with which we consider that certain assumptions on one side correspond to respective properties on the other side. In particular, we can at least start to think of the question of Littlewood \cite{Littlewood}, placed into the general context of the Beurling theory: what amount of oscillation is "caused by having a certain zero" $\rho=\beta+i\gamma$ of $\zeta$, and how this oscillation can be demonstrated effectively, i.e. with explicit bounds approaching the expected order, suggested by the Riemann-von Mangoldt formula? This question, and to find some explicit bounds on $\psi(x)$ exploiting "the effect of that one hypothetical zeta-zero", was a notable problem first answered by Turán \cite{Turan1}. In many respects, the best (and essentially optimal) results in this direction are due to Pintz \cite{Pintz1} \cite{PintzProcStekl}, but various ramifications, equivalences and variants were proved by a number of people including the author \cite{RevAA}. Here we refrain from any further discussion of these related number theory developments.

The present paper is not a stand-alone paper. We intend to work out a detailed analysis of the connection between distribution of $\zeta$-zeros and oscillation of the (von Mangoldt type modified) prime counting function $\psi(x)$. In the continuation of this work \cite{Rev20} we present several definitely new results on the distribution of the zeros of the Beurling zeta function. The interested reader may have a look into \cite{Rev20} to see what our direct aims are in this regard. Then, based on the analysis in these two papers dealing with the Beurling zeta function only, we intend to turn to the analysis of oscillatory properties of $\psi(x)$ itself. A more detailed description of our "number theory" results we postpone to the introduction of these forthcoming papers, but let us emphasize that the calculus here, however cumbersome, is not totally pointless.

In particular, the concluding result of this work is a Riemann-von Mangoldt type formula. The Riemann-von Mangoldt formula is  expressing oscillation of the remainder term in the prime number theorem PNT (here for Beurling primes) by suitable partial sums of a generally divergent, yet for our purpose very intuitive series over the zeroes of $\zeta$. Although the proof of this is standard, known in the classical case from the XIXth century, it seems that authors on the Beurling generalized number systems discarded this formula, possibly because of the uncontrollable divergence properties of the series. Yet it is very intuitive if we try to read down the exact correspondence between the oscillation of the remainder term of the PNT on the one hand and the location and distribution of the zeros of the Beurling zeta function on the other hand. Although the results proved in the Beurling case did not reach the generality and precision which would be suggested by these formulae, we can point out that in the classical case, and in fact in somewhat wider generality, rather precise correspondences were already proved \cite{Pintz1}, \cite{PintzProcStekl}, \cite{RevAA}. Starting with this formula thus seems to be the right first step to work out similarly sharp correspondences in the generality of Beurling number systems.

\section{Basic properties of the Beurling $\zeta$}
\label{sec:basics}

The following basic lemmas are slightly elaborated forms of 4.2.6.
Proposition, 4.2.8. Proposition and 4.2.10. Corollary of
\cite{Knopf}. These form the most basic facts concerning the
Beurling zeta function. Still, we give a proof for explicit
handling of the arising constants in these estimates.
\begin{lemma}\label{l:zetaxs} Denote the "partial sums" (partial
Laplace transforms) of $\N|_{[1,X]}$ as $\ze_X$ for arbitrary $X\geq 1$:
\begin{equation}\label{zexdef}
\ze_X(s):=\int_1^X x^{-s} d\N(x).
\end{equation}
Then $\ze_X(s)$ is an entire function and for $\sigma:=\Re s
>\theta$ it admits
\begin{equation}\label{zxrewritten}
\ze_X(s)=
\begin{cases} \ze(s)-\frac{\kappa X^{1-s}}{s-1}-\int_X^\infty x^{-s}d\R(x)
& \textrm{for all} ~~~s\ne 1,  \\
\frac{\kappa }{s-1}-\frac{\kappa X^{1-s}}{s-1}+\int_1^X
x^{-s}d\R(x)
& \textrm{for all} ~~~s\ne 1, \\
\kappa \log X + \int_1^X \frac{d\R(x)}{x} & \textrm{for}~~~ s=1,
\end{cases}
\end{equation}
together with the estimate
\begin{equation}\label{zxesti}
\left|\ze_X(s) \right| \leq \ze_X(\sigma) \leq
\begin{cases} \min \left( \frac{\kappa X^{1-\sigma}}{1-\sigma}
+ \frac{A}{\sigma-\theta},~ \kappa X^{1-\sigma}\log X +
\frac{A}{\sigma-\theta}\right) &\textrm{if} \quad
\theta<\sigma < 1,
\\ \kappa \log X + \frac{A}{1-\theta}& \textrm{if} \qquad \sigma =1,
\\ \min\left( \frac{\sigma (A+\kappa)}{\sigma-1},~{\kappa}\log X + \frac{\sigma A}{\sigma-\theta}
\right) &\textrm{if}\quad
\sigma>1.
\end{cases}
\end{equation}
Moreover, the above remainder terms can be bounded as follows.
\begin{equation}\label{zxrlarge}
\left| \int_X^\infty x^{-s}d\R(x) \right| \leq A
\frac{|s|+\sigma-\theta}{\sigma-\theta} X^{\theta-\sigma}
\end{equation}
and
\begin{equation}\label{zxrlow}
\left| \int_1^X x^{-s}d\R(x) \right| \leq A \left(
|s|\frac{1-X^{\theta-\sigma}}{\sigma-\theta} +
X^{\theta-\sigma}\right) \leq A \min \left(
\frac{|s|}{\sigma-\theta},~|s| \log X + X^{\theta-\sigma} \right).
\end{equation}
\end{lemma}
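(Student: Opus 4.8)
The plan is to reduce everything to the single identity $d\N(x)=\kappa\,dx+d\R(x)$ furnished by Axiom A, together with repeated use of Stieltjes integration by parts and the bound $|\R(x)|\le Ax^{\theta}$. First I would note that, $|\G|$ being discrete, $\ze_X(s)=\sum_{g\in\G,\ |g|\le X}|g|^{-s}$ is a \emph{finite} linear combination of the entire functions $s\mapsto |g|^{-s}=e^{-s\log|g|}$, whence $\ze_X$ is entire; and since $|x^{-s}|=x^{-\sigma}$ while $d\N\ge 0$, the triangle inequality gives $|\ze_X(s)|\le\int_1^X x^{-\sigma}\,d\N(x)=\ze_X(\sigma)$, which is the first inequality of \eqref{zxesti}. (Note also $\ze_X(\sigma)\ge 1>0$, so absolute values may be dropped for real $s$.)

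For the representation \eqref{zxrewritten} I would substitute $d\N=\kappa\,dx+d\R$ and evaluate $\kappa\int_1^X x^{-s}\,dx$, which is $\frac{\kappa}{s-1}-\frac{\kappa X^{1-s}}{s-1}$ for $s\ne1$ and $\kappa\log X$ for $s=1$; this immediately yields the second and third lines. The first line then follows from the continuation formula $\ze(s)=\frac{\kappa}{s-1}+\int_1^\infty x^{-s}\,d\R(x)$ valid for $\sigma>\theta$, once one records the integration-by-parts identity $\int_1^X x^{-s}\,d\R(x)=X^{-s}\R(X)+s\int_1^X x^{-s-1}\R(x)\,dx$ (the lower endpoint term vanishing because $\R(1-0)=0$) and observes that as $X\to\infty$ the boundary term is $O(X^{\theta-\sigma})\to0$ while $\int_1^\infty x^{-s-1}\R(x)\,dx$ converges absolutely, since $|x^{-s-1}\R(x)|\le Ax^{\theta-\sigma-1}$ with exponent $<-1$; subtracting $\int_X^\infty$ from $\int_1^\infty$ produces the first line.

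The two remainder bounds fall straight out of the same integration-by-parts identity. For \eqref{zxrlow} I estimate $|X^{-s}\R(X)|\le AX^{\theta-\sigma}$ and $|s|\int_1^X x^{-\sigma-1}|\R(x)|\,dx\le |s|A\,\frac{1-X^{\theta-\sigma}}{\sigma-\theta}$, giving the first displayed expression; the subsequent $\min$ then follows from the elementary inequalities $1-X^{\theta-\sigma}\le(\sigma-\theta)\log X$ (a rewriting of $1-e^{-y}\le y$) and $1-X^{\theta-\sigma}\le 1$. For \eqref{zxrlarge} the same computation on $[X,\infty)$ has boundary term $-X^{-s}\R(X)$ and $\int_X^\infty x^{\theta-\sigma-1}\,dx=\frac{X^{\theta-\sigma}}{\sigma-\theta}$, giving $A X^{\theta-\sigma}\bigl(1+\frac{|s|}{\sigma-\theta}\bigr)$, i.e. the asserted bound.

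Finally, for the three ranges in \eqref{zxesti} I would plug the appropriate line of \eqref{zxrewritten} (with $s=\sigma$ real) into $\ze_X(\sigma)$ and bound the $\R$-integral by \eqref{zxrlow}. For $\theta<\sigma<1$ the second line gives $\ze_X(\sigma)=\frac{\kappa(X^{1-\sigma}-1)}{1-\sigma}+\int_1^X x^{-\sigma}\,d\R(x)$, and the two members of the $\min$ come from $\frac{X^{1-\sigma}-1}{1-\sigma}\le\min\bigl(\frac{X^{1-\sigma}}{1-\sigma},\,X^{1-\sigma}\log X\bigr)$ (the second part being $e^{y}-1\le ye^{y}$, $y\ge0$) together with $\bigl|\int_1^X x^{-\sigma}\,d\R(x)\bigr|\le\frac{A\sigma}{\sigma-\theta}\le\frac{A}{\sigma-\theta}$. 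For $\sigma=1$ the third line and $\bigl|\int_1^X x^{-1}\,d\R(x)\bigr|\le\frac{A}{1-\theta}$ suffice. For $\sigma>1$ the first bound follows from $\ze_X(\sigma)\le\ze(\sigma)=\frac{\kappa}{\sigma-1}+\int_1^\infty x^{-\sigma}\,d\R(x)$ with $\bigl|\int_1^\infty x^{-\sigma}\,d\R(x)\bigr|\le\frac{\sigma A}{\sigma-\theta}\le\frac{\sigma A}{\sigma-1}$ and $\kappa\le\sigma\kappa$; the second bound follows from the second line, using $\frac{\kappa(1-X^{1-\sigma})}{\sigma-1}\le\kappa\log X$ (again $1-e^{-y}\le y$) and $\bigl|\int_1^X x^{-\sigma}\,d\R(x)\bigr|\le\frac{\sigma A}{\sigma-\theta}$. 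I expect no conceptual difficulty here; the only real work is bookkeeping — tracking the $s=1$ exception, the $1-0$ endpoint convention in the Stieltjes integrals, and checking that each elementary inequality is exactly sharp enough to give the stated constants.
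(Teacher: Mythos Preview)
Your proposal is correct and follows essentially the same route as the paper: substitute $d\N=\kappa\,dx+d\R$, integrate by parts against the bound $|\R(x)|\le Ax^\theta$, and track the resulting constants. The only organizational difference is that the paper derives \eqref{zxesti} by partially integrating $\ze_X(\sigma)=\int_1^X x^{-\sigma}\,d\N(x)$ directly (substituting $\N(x)\le\kappa(x-1)+Ax^\theta$ after the integration by parts), whereas you first establish \eqref{zxrewritten} and \eqref{zxrlow} and then feed those back into $\ze_X(\sigma)$; both paths land on the identical intermediate inequality $\ze_X(\sigma)\le\frac{\kappa}{1-\sigma}(X^{1-\sigma}-1)+\frac{\sigma A}{\sigma-\theta}$, so there is no substantive difference.
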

\begin{proof} Clearly, both $\ze_X$ and $\int_1^X x^{-s} d\R(x)$ are
entire functions, $\ze$ is regular for $\Re s > \theta$ except at
$s=1$, $X^{1-s}/(s-1)$ is regular all over $\CC\setminus \{1\}$,
while $\int_X^\infty x^{-s} d\R(x)$ is regular for $\Re s
>\theta$. Hence it suffices to prove the formulae \eqref{zxrewritten}
for $\Re s>1$, and then refer to analytic continuation in
extending them all over the common domain of regularity of both
sides, i.e. $\{ \Re s > \theta \} \setminus \{1\}$.

The formulae then follow as for $\Re s > 1$ the defining integrals
are absolutely convergent, and to get the first form we only need
to recall $\N(x)=\kappa (x-1) +\R(x)$ and write
$$
\ze_X(s)=\ze(s)-\int_X^\infty x^{-s}d\N(x) = \ze(s) - \kappa
\int_X^\infty x^{-s} d(x-1) - \int_X^\infty x^{-s} d\R(x);
$$
while to obtain the second and also the third ones it suffices to
write again $\N(x)=\kappa (x-1) +\R(x)$ and execute the integration
directly for the main term.

To prove the estimate \eqref{zxesti} we utilize that by definition
$\N(x)$ is increasing, hence $d\N(x)$ is positive, and thus
$|\ze_X(s)|\leq \ze_X(\sigma)$, which in turn can be calculated by
partial integration and substituting \eqref{Athetacondi}. Namely,
for $\sigma\ne 1$ and $\sigma>\theta$
\begin{align*}
\ze_X(\sigma) &= \left[ x^{-\sigma} \N(x)\right]_1^X
+\sigma\int_1^X x^{-\sigma-1} \N(x) dx
\\ & \leq \frac{\kappa (X-1) + A X^{\theta}}{X^{\sigma}} + \sigma \int_1^X
\frac{\kappa(x-1)}{x^{1+\sigma}}dx +\sigma \int_1^X
\frac{A}{x^{\sigma-\theta+1}}dx
\\ & = \kappa (X^{1-\sigma}-X^{-\si}) + A X^{\theta-\sigma} + \frac{\sigma
\kappa}{1-\sigma} \left(X^{1-\sigma} -1 \right) + \kappa\left(X^{-\sigma} -1 \right) + \frac{\sigma A}{\sigma-\theta} \left( 1-X^{\theta-\sigma}\right)
\\ & = \frac{\kappa}{1-\sigma} \left(X^{1-\sigma} -1 \right) +
\frac{\sigma A}{\sigma-\theta} -\frac{\theta A}{\sigma-\theta} X^{\theta-\sigma}
< \frac{\kappa}{1-\sigma} \left(X^{1-\sigma} -1 \right) +
\frac{\sigma A}{\sigma-\theta}
\\ & \leq \begin{cases} \min \left( \frac{\kappa X^{1-\sigma}}{1-\sigma}
+ \frac{A}{\sigma-\theta},~ \kappa X^{1-\sigma}\log X +
\frac{A}{\sigma-\theta}\right) &\textrm{if} \quad \sigma<1 \\
\min\left( \frac{\sigma (A+\kappa)}{\sigma-1},~ {\kappa}\log X +
\frac{\sigma A}{\sigma-\theta} \right) &\textrm{if}\quad
\sigma>1
\end{cases},
\end{align*}
with an application of $\frac{e^y-1}{y}\le e^y\frac{1-e^{-y}}{y}\le e^y ~(y>0)$ in the very
last estimations involving $\log X$. The case $\sigma=1$ follows
by taking the limit of the second estimate in the first line of \eqref{zxesti} when $\sigma\to 1-$, as $\ze_X(\sigma)$ is regular.

It remains to estimate the error terms arising from $\R(x)$. After
partial integration
$$
\int x^{-s} d\R(x) = \left[\R(x) x^{-s} \right] +s \int
\frac{\R(x)}{x^{s+1}} dx
$$
and computing the actual values of the integrated part and
applying $|\R(x)|\leq Ax^\theta$ both there and in the integrals
lead to \eqref{zxrlarge} and the first estimate in \eqref{zxrlow},
resp. The last estimate in \eqref{zxrlow} follows again by
$\frac{1-e^{-y}}{y}\le 1 ~(y>0)$.
\end{proof}

\begin{lemma}\label{l:zkiss} We have
\begin{equation}\label{zsgeneral}
\left|\zs-\frac{\kappa}{s-1}\right|\leq \frac{A|s|}{\sigma-\theta}
\qquad \qquad\qquad (\theta <\sigma ,~ t\in\RR,~~ s\ne 1).
\end{equation}
In particular, for large enough values of $t$ it holds
\begin{equation}\label{zsgenlarget}
\left|\zs \right|\leq \sqrt{2} \frac{(A+\kappa)|t|}{\sigma-\theta}
\qquad \qquad\qquad (\theta <\sigma \leq |t|).
\end{equation}
while for small values of $t$ we have
\begin{equation}\label{zssmin1}
|\zs(s-1)-\kappa|\leq \frac{A|s||s-1|}{\sigma-\theta} \leq
\frac{100 A}{\sigma-\theta}\qquad (\theta <\sigma \leq 4,~|t|\leq
9).
\end{equation}
As a consequence, we also have
\begin{equation}\label{polenozero}
\zs\ne 0 \qquad \textrm{for}\qquad  |s-1| \leq
\frac{\kappa(1-\theta)}{A+\kappa}.
\end{equation}
\end{lemma}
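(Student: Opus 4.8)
The plan is to base everything on the meromorphic continuation
$$\zs=\frac{\kappa}{s-1}+\int_1^\infty x^{-s}\,d\R(x)\qquad(\sigma>\theta),$$
recorded right after \eqref{zetadef}, and to estimate the tail integral sharply. First I would prove \eqref{zsgeneral}. Integrating by parts, the lower boundary term vanishes because $\R(1-0)=0$ (the footnote convention), and the one at infinity vanishes because $|\R(x)|\le Ax^\theta$ and $\sigma>\theta$, so the integral equals $s\int_1^\infty\R(x)x^{-s-1}\,dx$; then $|\R(x)|\le Ax^\theta$ gives $\bigl|\int_1^\infty x^{-s}\,d\R(x)\bigr|\le A|s|\int_1^\infty x^{\theta-\sigma-1}\,dx=A|s|/(\sigma-\theta)$, which is exactly \eqref{zsgeneral}. (One could instead quote \eqref{zxrlarge} at $X=1$, but redoing it is what removes the spurious extra $\sigma-\theta$ in the numerator there.)

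Next, the two corollaries \eqref{zsgenlarget} and \eqref{zssmin1} are bookkeeping on top of $|\zs|\le\kappa/|s-1|+A|s|/(\sigma-\theta)$. For \eqref{zsgenlarget}, in the range $\theta<\sigma\le|t|$ one has $|s-1|\ge|t|$ and $|s|\le\sqrt2|t|$, hence $|\zs|\le\kappa/|t|+\sqrt2A|t|/(\sigma-\theta)$; and since $\sigma-\theta<\sigma\le|t|\le\sqrt2\,t^2$ once $|t|\ge1$, the first term is absorbed into $\sqrt2\kappa|t|/(\sigma-\theta)$. For \eqref{zssmin1} I would multiply \eqref{zsgeneral} by $|s-1|$ to get $|\zs(s-1)-\kappa|\le A|s||s-1|/(\sigma-\theta)$ and then bound $|s|<\sqrt{16+81}<10$ and $|s-1|<\sqrt{9+81}<10$ from $\sigma\le4$, $|t|\le9$; the excluded point $s=1$ is recovered by continuity of the regular function $(s-1)\zs$.

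For the zero-free disc \eqref{polenozero}, set $r_0:=\kappa(1-\theta)/(A+\kappa)$. Since $A>0$ forces $r_0<1-\theta$, every $s$ with $|s-1|\le r_0$ has $\sigma\ge1-r_0>\theta$, so there $\zs$ is regular apart from the simple pole at $s=1$, at which it is trivially nonzero. For $s\ne1$ in the disc, \eqref{zsgeneral} yields $|\zs|\ge\kappa/|s-1|-A|s|/(\sigma-\theta)$, so it suffices to check $A|s||s-1|<\kappa(\sigma-\theta)$ throughout the disc. Putting $\rho:=|s-1|\le r_0$ and $y:=|s|$, the identity $\sigma=(y^2-\rho^2+1)/2$ turns $\kappa(\sigma-\theta)-A|s||s-1|$ into $\tfrac12 P(y)$ with $P(y):=\kappa y^2-2A\rho y+\kappa(1-2\theta-\rho^2)$; since $\rho\le r_0<\kappa/(A+\kappa)$, the vertex $A\rho/\kappa$ of this upward parabola lies below $1-\rho$, and on the admissible range $1-\rho\le y\le1+\rho$ (triangle inequality for the vertices $0,1,s$) its minimum is thus $P(1-\rho)=2\bigl((r_0-\rho)(A+\kappa)+A\rho^2\bigr)\ge2A\rho^2>0$ for $0<\rho\le r_0$. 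Hence $\zs\ne0$ on the closed disc.

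The only genuinely non-routine point is \eqref{polenozero}: the crude estimates $|s|\le1+r_0$ and $\sigma-\theta\ge1-\theta-r_0$ just miss, losing precisely the factor $1+r_0>1$, so one has to exploit that $|s|$ and $\sigma-\theta$ cannot be extremal simultaneously on a disc centred at $1$ — which is exactly what the substitution $\sigma=(|s|^2-|s-1|^2+1)/2$ and the ensuing one-variable parabola encode. Everything else reduces to manipulating \eqref{zsgeneral}.
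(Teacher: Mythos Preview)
Your treatment of \eqref{zsgeneral} and \eqref{zssmin1} coincides with the paper's. For \eqref{zsgenlarget} there is a small but real gap: you absorb $\kappa/|s-1|\le\kappa/|t|$ into $\sqrt2\,\kappa|t|/(\sigma-\theta)$ via $\sigma-\theta<|t|\le\sqrt2\,t^2$, which needs $|t|\ge1/\sqrt2$, whereas the stated range $\theta<\sigma\le|t|$ allows $|t|$ arbitrarily close to $\theta$. The paper bounds $1/|s-1|$ directly instead: from $|t|\ge\sigma>\sigma-\theta$ one gets $t^2|s-1|^2/(\sigma-\theta)^2>t^2+(\sigma-1)^2\ge\sigma^2+(\sigma-1)^2\ge1/2$, hence $1/|s-1|\le\sqrt2\,|t|/(\sigma-\theta)$ with no lower bound on $|t|$. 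This is a one-line repair of your argument.

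For \eqref{polenozero} your argument is correct and takes a genuinely different, cleaner route than the paper's. The paper first discards $\sigma>1$ via the Euler product, then parametrises the boundary quarter-circle $s=1+re^{i\varphi}$, $\pi/2\le\varphi\le\pi$, and shows by a derivative computation that $g(\lambda)=(1-\theta-r\lambda)^2/(1-2r\lambda+r^2)$ (with $\lambda=-\cos\varphi$) is monotone, locating the minimum of $\kappa(\sigma-\theta)/(A|s|)$ at $s=1-r$; the endpoint inequality $f(1-r)>r$ is then checked algebraically. Your substitution $\sigma=(|s|^2-|s-1|^2+1)/2$ turns $\kappa(\sigma-\theta)-A|s||s-1|$ into a quadratic in $y=|s|$ whose vertex $A\rho/\kappa$ lies left of the admissible interval $[1-\rho,1+\rho]$, so the minimum is at $y=1-\rho$; this needs no calculus, no separate appeal to the zero-free half-plane $\sigma>1$, and delivers the closed-disc conclusion directly. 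Both approaches identify the same extremal point $s=1-r_0$; yours reaches it by pure algebra.
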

\begin{proof}
Actually, by $\N(x)=\kappa x +\R(x)$ and the trivial integral
$\int_1^{\infty} x^{-s}dx=1/(s-1)$, to get \eqref{zsgeneral} it remains only to see that
$|\int_1^{\infty} x^{-s}d\R(x)| \leq A|s|/(\sigma-\theta)$. That
is straightforward (and is contained, after $X\to\infty$, in
\eqref{zxrlow}, too). Whence we have \eqref{zsgeneral}.

To deduce \eqref{zsgenlarget} one has to take into account that in
the given domain we have $t^2+\sigma^2 \leq 2 t^2$, hence $|s|\leq
\sqrt{2}|t|$, and that $t^2 |s-1|^2 /(\sigma-\theta)^2 =
t^2(t^2+(\si-1)^2)/(\sigma-\theta)^2 > \si^2(\si^2+(\si-1)^2)/\si^2 = \si^2+(\si-1)^2 \ge 1/2$, hence $1/|s-1| \le \sqrt{2} |t|/(\sigma-\theta)$, too.

To obtain \eqref{zssmin1} from \eqref{zsgeneral} it suffices to estimate the arising factor $|s||s-1|$ trivially.

Further, from \eqref{zssmin1} it follows that to have that $\zs (s-1) = \kappa + (s-1) \int_1^{\infty} x^{-s}d\R(x)=\kappa + \left(\zs(s-1)-\kappa\right)\ne 0$, it suffices that $ \kappa > |s(s-1)| \frac{A}{\sigma-\theta}$, that is $\frac{\kappa(\si-\theta)}{A|s|} > |s-1|$.

Note that $\zs \ne 0$ for $\si>0$, whence we can restrict considerations to the critical strip $\theta<\si\le 1$. Let now $0<r<1-\theta$ be a parameter, to be specified later, and assume that $|s-1|\le r$. Then we are to show $f(s):=\frac{\kappa(\si-\theta)}{A|s|}>r$ for $s=\si+it \in \{s~:~ \theta<\si\le 1, |s-1|\le r\}=:G$. It is clear that for any fixed value of $\si=\si_0$, $f(\si_0+it)$ becomes minimal at the endpoints $\si_0\pm it_0$, where $t_0=\sqrt{r^2-(1-\si_0)^2}$. Further, the function value of $f$ is symmetric with respect to the real axis, so in fact it suffices to take into account the upper endpoint here. So consider the function $f$ restricted to these points on the quarter circle bounding $G$ from the upper left side, i.e. $f(1+r\cos \ff+ir\sin\ff)$, as $\pi/2\le\ff\le \pi$. We have
$f(1+r\cos\ff+ir\sin\ff) =\frac{\kappa}{A} \frac{1+r\cos\ff -\theta}{\sqrt{(1+r\cos\ff)^2+(r\sin\ff)^2}}=\frac{\kappa}{A} \sqrt{\frac{(1+r\cos\ff -\theta)^2}{(1+r\cos\ff)^2+(r\sin\ff)^2}}=\frac{\kappa}{A} \sqrt{\frac{(1-\theta-r\lambda)^2}{1-2r\lambda+r^2}}$, where we have put $\lambda:=-\cos\ff \in [0,1]$. Whence it suffices to find the minimum of the function $g(\lambda):=\frac{(1-\theta-r\lambda)^2}{1-2r\lambda+r^2}$. A little calculus shows that $g'(\lambda)<0$, the function is strictly decreasing, whence its minimum is attained at the point $\lambda=1$. Winding up, this means that $f$ is minimal at the point $s=1-r$. It remains to derive $f(1-r)= \frac{\kappa}{A} \frac{1-\theta-r}{1-r} >r$ for a sufficiently small choice of the parameter $r$. This is equivalent to $\frac{\kappa(1-\theta)}{A} > \frac{\kappa+A}{A}r-r^2$, which surely holds whenever $\frac{\kappa(1-\theta)}{A} \ge \frac{\kappa+A}{A}r$, i.e. whenever $r\le \frac{\kappa(1-\theta)}{\kappa+A}$, as needed.

\end{proof}

\begin{lemma}\label{l:oneperzeta} We have
\begin{equation}\label{zsintheright}
|\zs| \leq \frac{(A+\kappa)\sigma}{\sigma-1} \qquad (\sigma >1).
\end{equation}
and also
\begin{equation}\label{reciprok}
|\zs| \geq \frac{1}{\ze(\sigma)} >
\frac{\sigma-1}{(A+\kappa)\sigma} \qquad (\sigma >1).
\end{equation}
\end{lemma}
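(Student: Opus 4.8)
The plan is to obtain the upper bound \eqref{zsintheright} from the trivial majorization of the Dirichlet series by its value on the real axis, and the lower bound \eqref{reciprok} from the Euler product representation of $\ze$ valid in the half-plane of absolute convergence.

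For \eqref{zsintheright}, first I would note that for $\sigma=\Re s>1$ the series $\sum_{g\in\G}|g|^{-s}$ converges absolutely: indeed $\N(x)=\kappa(x-1)+\R(x)<(\kappa+A)x$ for $x\ge1$, so Abel summation (the boundary contributions vanishing by $\N(1-0)=0$ and $x^{-\sigma}\N(x)\to0$) gives $\ze(\sigma)=\sigma\int_{1-0}^{\infty}\N(x)x^{-\sigma-1}\,dx<\sigma(\kappa+A)\int_1^{\infty}x^{-\sigma}\,dx=\tfrac{(\kappa+A)\sigma}{\sigma-1}$; this is also the first entry of the minimum in \eqref{zxesti} for $\sigma>1$, obtained by letting $X\to\infty$. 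The termwise triangle inequality then yields $|\zs|\le\sum_{g\in\G}|g|^{-\sigma}=\ze(\sigma)$, and combined with the previous display this is \eqref{zsintheright}. Note that the inequality $\ze(\sigma)<\tfrac{(\kappa+A)\sigma}{\sigma-1}$ is strict for every $\sigma>1$.

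For \eqref{reciprok}, I would invoke unique factorization: since the primes $p\in\PP$ freely generate $\G$ and the Dirichlet series converges absolutely for $\sigma>1$, the Euler product $\zs=\prod_{p\in\PP}(1-|p|^{-s})^{-1}$ holds there, and in particular $\zs\ne0$. Hence
$$
\frac1{|\zs|}=\prod_{p\in\PP}\bigl|1-|p|^{-s}\bigr|\le\prod_{p\in\PP}\bigl(1+|p|^{-\sigma}\bigr)=\prod_{p\in\PP}\frac{1-|p|^{-2\sigma}}{1-|p|^{-\sigma}}=\frac{\ze(\sigma)}{\ze(2\sigma)}<\ze(\sigma),
$$
using $\ze(2\sigma)>1$. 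This gives $|\zs|\ge1/\ze(\sigma)$, and combining with the strict form of the bound just proved for $\ze(\sigma)$ yields $|\zs|\ge1/\ze(\sigma)>\tfrac{\sigma-1}{(\kappa+A)\sigma}$, which is \eqref{reciprok}.

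There is no serious obstacle here; the only points demanding a little care are the justification of absolute convergence and of the Euler product for $\sigma>1$ (both standard consequences of $\N(x)=O(x)$ together with unique factorization), the bookkeeping of the normalization at $x=1$ in the Abel summation (handled by the convention $\R(1-0)=0$ recorded in the footnote), and keeping track of the strictness of $\ze(\sigma)<\tfrac{(\kappa+A)\sigma}{\sigma-1}$ so that the final bound in \eqref{reciprok} is strict as stated.
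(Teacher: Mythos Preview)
Your proof is correct. For \eqref{zsintheright} your argument is identical in substance to the paper's, which simply points to the last line of \eqref{zxesti} (derived by the same partial integration) and lets $X\to\infty$.

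For \eqref{reciprok} there is a small but genuine difference in route. The paper bounds $1/\zs$ via its Dirichlet series: since $1/\zs=\sum_{g\in\G}\mu(g)|g|^{-s}$ and $|\mu(g)|\le1$, termwise estimation gives $|1/\zs|\le\ze(\sigma)$ directly. You instead work with the Euler product, obtaining the sharper intermediate bound $|1/\zs|\le\prod_p(1+|p|^{-\sigma})=\ze(\sigma)/\ze(2\sigma)$ before relaxing to $\ze(\sigma)$. The two arguments are close cousins---expanding your product $\prod_p(1+|p|^{-\sigma})$ gives exactly $\sum_{g}|\mu(g)||g|^{-\sigma}$---but your version makes the nonvanishing of $\zs$ for $\sigma>1$ explicit and yields a slightly stronger inequality along the way, at the cost of invoking the Euler product rather than just the Dirichlet coefficients of the inverse.
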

\begin{proof} The first estimate follows from \eqref{zxesti}, last line after
$X\to\infty$. To obtain the second, consider the Dirichlet series
of $1/\ze$: as $|\mu(g)|\leq 1$, the coefficientwise estimation of
$1/\ze$ provides the estimate $|1/\zs|\leq \ze(\sigma)$, and the
rest is already contained in the first inequality.
\end{proof}

\begin{lemma}\label{l:zetainrighthp} In the right halfplane
of convergence we have
\begin{equation}\label{zsintherightlarget}
|\zs| \leq \frac{\kappa}{\sigma-\theta}\log|t| + \frac73
\frac{\sigma(A+\kappa)}{\sigma-\theta} \qquad (\sigma
>1,~|t|\geq 4)
\end{equation}
\end{lemma}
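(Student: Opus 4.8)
The plan is the classical device of splitting the Dirichlet series of $\zeta$ at a cut-off $X$, choosing $X$ so as to balance the partial sum against the tail, after first disposing of $\sigma$ bounded away from $1$ by means of the crude bound \eqref{zsintheright}. Concretely, for $\sigma\ge 7/4$ I would argue directly: \eqref{zsintheright} gives $|\zs|\le\frac{(A+\kappa)\sigma}{\sigma-1}$, and since $4\sigma\ge 7>7-3\theta$ we have $3(\sigma-\theta)<7(\sigma-1)$, whence $|\zs|<\frac73\cdot\frac{(A+\kappa)\sigma}{\sigma-\theta}$, which lies below the asserted bound since $\log|t|>0$ for $|t|\ge 4$. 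So I may assume $1<\sigma\le 7/4$, and there I set $X:=|t|^{1/(\sigma-\theta)}$; the point of this choice is that $X^{\theta-\sigma}=|t|^{-1}$ while $\kappa\log X=\frac{\kappa\log|t|}{\sigma-\theta}$, i.e. the partial sum yields exactly the desired main term while the weight $|s|\,X^{\theta-\sigma}$ occurring in the tail stays of size $O(1)$.

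By the first identity of \eqref{zxrewritten}, $\zs=\ze_X(s)+\frac{\kappa X^{1-s}}{s-1}+\int_X^\infty x^{-s}\,d\R(x)$, and I would bound the three summands separately: (i) $|\ze_X(s)|\le\ze_X(\sigma)\le\kappa\log X+\frac{\sigma A}{\sigma-\theta}=\frac{\kappa\log|t|}{\sigma-\theta}+\frac{\sigma A}{\sigma-\theta}$ by the $\sigma>1$ branch of \eqref{zxesti}; (ii) $\left|\frac{\kappa X^{1-s}}{s-1}\right|=\frac{\kappa X^{1-\sigma}}{|s-1|}<\frac{\kappa}{4}$, using $|s-1|\ge|t|\ge 4$ and $X^{1-\sigma}<1$; and (iii) by \eqref{zxrlarge} together with $X^{\theta-\sigma}=|t|^{-1}$ and the elementary estimate $|s|=\sqrt{\sigma^2+t^2}\le|t|+\frac{\sigma^2}{2|t|}\le|t|+\frac{\sigma^2}{8}$ (valid since $|t|\ge 4$), $\left|\int_X^\infty x^{-s}\,d\R(x)\right|\le\frac{A(|s|+\sigma-\theta)}{(\sigma-\theta)|t|}\le\frac{A(\sigma^2/8+\sigma+4-\theta)}{4(\sigma-\theta)}$.

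Adding (i)--(iii), the claim reduces to the purely numerical inequality $\sigma A+\frac14A\bigl(\frac{\sigma^2}{8}+\sigma+4-\theta\bigr)+\frac14\kappa(\sigma-\theta)\le\frac73\sigma(A+\kappa)$ for $1<\sigma\le 7/4$ and $0<\theta<1$. Collecting the coefficients of $A$ and of $\kappa$, the difference of the right- and left-hand sides works out to $A\bigl(\frac{13\sigma}{12}-\frac{\sigma^2}{32}-1+\frac{\theta}{4}\bigr)+\kappa\cdot\frac{25\sigma+3\theta}{12}$, and both coefficients are positive on $1<\sigma\le 7/4$: the coefficient of $\kappa$ is visibly positive, while that of $A$ equals $\frac{5}{96}+\frac{\theta}{4}>0$ at $\sigma=1$ and has positive derivative $\frac{13}{12}-\frac{\sigma}{16}$ in $\sigma$ on that interval. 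The whole argument is then routine; the one mildly delicate point I anticipate is that $\frac73$ is essentially the best constant this method gives — it is approached in the limit $\theta\to0$, $\kappa/A\to0$, $\sigma\to1^{+}$ — so in step (iii) the refinement $\sqrt{\sigma^2+t^2}\le|t|+\sigma^2/(2|t|)$ must be retained rather than the cruder $|s|\le\sigma+|t|$, which would already make the coefficient of $A$ negative for small $\theta$.
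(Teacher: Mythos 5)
Your proof is correct and follows essentially the same route as the paper: both decompose $\zeta$ via the first identity of \eqref{zxrewritten}, choose $X=|t|^{1/(\sigma-\theta)}$ so that $X^{\theta-\sigma}=|t|^{-1}$, and bound the three pieces term by term. The only difference is that the paper dispenses with your preliminary split at $\sigma=7/4$ by using the uniform estimate $|s|\le\frac{33}{32}\sigma|t|$ (valid for all $\sigma>1$, $|t|\ge4$) in place of $|s|\le|t|+\sigma^2/8$, which makes the closing numerical inequality hold for all $\sigma>1$ at once.
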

\begin{proof} By the first formula in \eqref{zxrewritten} we can write
\begin{equation}\label{zetazetax}
\zs=\ze_X(s)+\frac{\kappa
X^{1-s}}{s-1}+\int_X^{\infty}\frac{d\R(x)}{x^s}.
\end{equation}
From here by termwise estimation, using the second part of the
third line in \eqref{zxesti} in combination with \eqref{zxrlarge}, we arrive at
$$
|\zs|\leq {\kappa}\log X + \frac{\sigma A}{\sigma-\theta}
+ \frac{\kappa}{|t|} X^{1-\sigma} +
\frac{A(|s|+\sigma)}{\sigma-\theta} X^{\theta-\sigma}.
$$
Since $|t|\geq 4$ and $1<\sigma$, we have
$|s|/(|t|\sigma)=\sqrt{|t|^{-2}+\sigma^{-2}} \leq (33/32)$, and
$1/|s-1|\leq 1/4 \leq \sigma/(\sigma-\theta)$, so for
$X:=|t|^{1/(\sigma-\theta)}\geq 1$ we also have
$$
|\zs|\leq \frac{\kappa}{\sigma-\theta}\log |t| +
\frac{\sigma  A}{\sigma-\theta} + \frac{\kappa}{4} +
\frac{A\left(\frac{33}{32}|t|\sigma+\sigma\frac{|t|}{4}\right)}{\sigma-\theta}
\frac{1}{|t|} < \frac{\kappa}{\sigma-\theta}\log |t| +
\frac{7\sigma(\kappa+ A)}{3(\sigma-\theta)}.
$$
\end{proof}

For smaller values of $t$, but still separated from zero, \eqref{zsintherightlarget} can be complemented by a similar estimate not containing any term with $\log |t|$, but for $t$ very close to $0$ one needs to take into account the singularity of $\ze$ at $1$, so that the best we can have is an estimate of the form \eqref{zsgeneral}. Here is an explicit formulation of these, actually valid for all $\si>\theta$.

\begin{lemma}\label{r:zestismallt} Let $\tau \ge 1$ be any parameter. Then we have
\begin{equation}\label{zestitnearzero}
\left| \zs - \dfrac{\kappa}{s-1} \right|\le (\tau+1) \dfrac{A\max(1,\si)}{\si-\theta} \qquad  (\si>\theta, |t|\le \tau).
\end{equation}
and
\begin{equation}\label{zestitsmall}
|\ze(s)| \le \dfrac{(\tau+1)(A+\kappa) \max(1,\si) }{\si-\theta}  \qquad (\si>\theta, ~ \frac{1}{\tau+1} \le |t|\le \tau).
\end{equation}
Furthermore, we also have the estimate 
\begin{equation}\label{zestitnotlarge}
|\ze(s)|\le (\tau+1)(A+\kappa) \max(1,\si) \max\left(\frac{1}{\si-\theta}, \frac{1}{|1-\si|}\right) \qquad  (\si>\theta, |t|\le \tau).
\end{equation}
\end{lemma}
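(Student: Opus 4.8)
The plan is to obtain all three bounds as elementary consequences of the estimate \eqref{zsgeneral} already established in Lemma~\ref{l:zkiss}, namely $\left|\zs-\frac{\kappa}{s-1}\right|\le \frac{A|s|}{\sigma-\theta}$ for $\sigma>\theta$, $s\ne1$. The only genuinely new ingredient is a uniform bound on $|s|$ in the strip $|t|\le\tau$: since $|s|=\sqrt{\sigma^2+t^2}\le\sigma+|t|\le\sigma+\tau$, a one-line case split finishes the job. If $\sigma\ge1$ then $\tau\le\tau\sigma$, so $\sigma+\tau\le(\tau+1)\sigma=(\tau+1)\max(1,\sigma)$; if $\theta<\sigma<1$ then $\sigma+\tau<1+\tau=(\tau+1)\max(1,\sigma)$. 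Hence $|s|\le(\tau+1)\max(1,\sigma)$ for all such $s$, and substituting this into \eqref{zsgeneral} yields \eqref{zestitnearzero} immediately.

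For \eqref{zestitsmall} I would first note that $|s-1|=|(\sigma-1)+it|\ge|t|\ge\frac{1}{\tau+1}$ under the hypothesis, so the polar term is controlled by $\left|\frac{\kappa}{s-1}\right|\le\kappa(\tau+1)$. Combining this with \eqref{zestitnearzero} via the triangle inequality gives $|\zs|\le\kappa(\tau+1)+(\tau+1)\frac{A\max(1,\sigma)}{\sigma-\theta}$. It remains to absorb the first summand: since $\sigma>\theta>0$ one has $\sigma-\theta\le\max(1,\sigma)$ (check separately the cases $\sigma\ge1$ and $\theta<\sigma<1$), so $\kappa(\tau+1)\le(\tau+1)\frac{\kappa\max(1,\sigma)}{\sigma-\theta}$, and the two terms combine to exactly $(\tau+1)(A+\kappa)\frac{\max(1,\sigma)}{\sigma-\theta}$, which is \eqref{zestitsmall}.

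For \eqref{zestitnotlarge} I may assume $\sigma\ne1$, since otherwise the right-hand side is infinite and there is nothing to prove. Then $|s-1|\ge|\sigma-1|=|1-\sigma|$, so $\left|\frac{\kappa}{s-1}\right|\le\frac{\kappa}{|1-\sigma|}$. Writing $M:=\max\!\left(\frac{1}{\sigma-\theta},\frac{1}{|1-\sigma|}\right)$ and using that $(\tau+1)\max(1,\sigma)\ge1$, both the polar term $\frac{\kappa}{|1-\sigma|}\le\kappa M\le(\tau+1)\max(1,\sigma)\,\kappa M$ and the contribution $(\tau+1)\frac{A\max(1,\sigma)}{\sigma-\theta}\le(\tau+1)\max(1,\sigma)\,AM$ from \eqref{zestitnearzero} are bounded by the required shape, and adding them gives \eqref{zestitnotlarge}.

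There is no real obstacle here; the lemma is a repackaging of \eqref{zsgeneral} tailored for later use, and the proof is entirely routine. The only points deserving a word of care are the two small case analyses ($|s|\le(\tau+1)\max(1,\sigma)$ and $\sigma-\theta\le\max(1,\sigma)$) and the observation that the degenerate cases $\sigma=1$, or $t=0$ together with $\sigma$ near $1$, render the stated bounds trivially infinite, so no separate treatment of the pole at $s=1$ is needed. If sharper constants were wanted one could instead start from the bound $|s|\le\sqrt2\,\max(\sigma,|t|)$ used in the proof of Lemma~\ref{l:zkiss}, but the clean factor $(\tau+1)$ is all that is required in the sequel.
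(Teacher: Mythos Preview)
Your proof is correct and follows essentially the same approach as the paper's own proof: all three estimates are derived from \eqref{zsgeneral} via the bound $|s|\le\sigma+\tau\le(\tau+1)\max(1,\sigma)$, together with $|s-1|\ge|t|$ for \eqref{zestitsmall} and $|s-1|\ge|\sigma-1|$ for \eqref{zestitnotlarge}. The only cosmetic difference is that in handling the polar term for \eqref{zestitsmall} the paper writes $\tau+1\le(\tau+1)\sigma/(\sigma-\theta)$ directly (using $\theta>0$), whereas you reach the same conclusion via $\sigma-\theta\le\max(1,\sigma)$; both are one-line observations.
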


\begin{proof} The estimate \eqref{zestitnearzero} follows directly from \eqref{zsgeneral} and $|s|\le 
\si+\tau \le (\tau+1)\max(1,\si)$.

To see \eqref{zestitsmall} we need to note only $1/|s-1| \le 1/|t| \le \tau+1 \le (\tau+1)\si/(\si-\theta)$ and take into account \eqref{zestitnearzero}, already proven.

The assertion \eqref{zestitnotlarge} is a direct consequence of \eqref{zestitnearzero} if we take into account the trivial estimate $\kappa/|s-1|\le \kappa/|\si-1|$.
\end{proof}

\begin{lemma}\label{l:zetaincriticalstrip}
In the critical strip we have
\begin{equation}\label{zsintheleft}
|\zs| \leq 2~ (A+\kappa) \max \left( \frac{1}{\sigma-\theta},
\frac{1}{1-\sigma}\right)\cdot |t|^{\frac{1-\sigma}{1-\theta}}
\qquad (\theta <\sigma <1,~ |t| \geq 4).
\end{equation}
Moreover, we also have
\begin{equation}\label{zslogos}
|\zs| \leq 2 \frac{A+\kappa}{\sigma-\theta}
|t|^{\frac{1-\sigma}{1-\theta}} \cdot \log |t| \qquad (\theta
<\sigma <1,~ |t| \geq e^{5/4}).
\end{equation}
and
\begin{equation}\label{zslogoskist}
|\zs| \leq \frac52 \frac{A+\kappa}{\sigma-\theta}
|t|^{\frac{1-\sigma}{1-\theta}} \qquad (\theta
<\sigma <1,~ 1 \leq |t| \leq e^{5/4}).
\end{equation}
\end{lemma}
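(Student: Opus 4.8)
The plan is to interpolate between the bound on the line $\sigma>1$ (more precisely just to the right of it) and a bound on the line $\sigma=\theta$ (or rather $\sigma$ slightly above $\theta$), via the Phragm\'en--Lindel\"of principle applied to a suitably normalized version of $\zeta$ on a vertical strip. The function $\zeta$ itself has a pole at $s=1$, so I would work with $\zeta(s)(s-1)$, or, since we only care about $|t|\geq 4$ (resp.\ $|t|\geq e^{5/4}$), simply with $\zeta$ restricted to a half-strip bounded away from the pole, say $\{\,\theta+\delta\le \Re s\le 1+\delta,\ \Im s\ge 4\,\}$ for a small $\delta>0$, together with its mirror image in the lower half-plane. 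On the right edge $\Re s=1+\delta$ Lemma~\ref{l:oneperzeta} (or Lemma~\ref{l:zetainrighthp}) gives a bound that is $O(1)$ in $|t|$ up to the $\log|t|$ factor, while on the left edge $\Re s=\theta+\delta$ equation \eqref{zsgeneral} of Lemma~\ref{l:zkiss} gives $|\zeta(s)|\ll |t|/\delta$. Phragm\'en--Lindel\"of then yields a bound of the shape $|t|^{(1-\sigma)/(1-\theta)}$ times a logarithmic/constant factor across the strip, and letting $\delta\to 0$ (or choosing $\delta$ depending on $\sigma$) produces the exponent $\tfrac{1-\sigma}{1-\theta}$ exactly, together with the factor $\max\!\big(\tfrac1{\sigma-\theta},\tfrac1{1-\sigma}\big)$ coming from the two endpoint constants.

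Concretely, for \eqref{zsintheleft} I would fix $\sigma\in(\theta,1)$ and $|t|\ge 4$, set $\delta:=\min(\sigma-\theta,1-\sigma)/2$ or similar, and apply Phragm\'en--Lindel\"of on the strip $\theta+\tfrac{\delta}{?}\le \Re s\le 1+\delta$; the linear-in-$|t|$ growth on both edges (the pole being harmless since $|t|\ge 4$) justifies the hypotheses. The interpolated exponent at abscissa $\sigma$ is the linear function of $\sigma$ vanishing at $1$ (value $0$, since the right edge is $O(1)$ in $|t|$) and equal to $1$ at $\theta$ (value $1$, since the left edge is $O(|t|)$); this is $\tfrac{1-\sigma}{1-\theta}$. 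The endpoint constants are $\approx (A+\kappa)$ on the right (from \eqref{zsintheright}) and $\approx A|s|/(\sigma-\theta)\approx A|t|/(\sigma-\theta)$ on the left (from \eqref{zsgeneral}); taking a weighted geometric mean and bounding crudely gives the stated $2(A+\kappa)\max(\tfrac1{\sigma-\theta},\tfrac1{1-\sigma})$. For \eqref{zslogos} one keeps the extra $\log|t|$ arising from Lemma~\ref{l:zetainrighthp}'s right-edge bound (valid for $|t|\ge 4$, and the constraint $|t|\ge e^{5/4}$ is just to guarantee $\log|t|\ge 5/4>1$ so that the $\log$ absorbs the additive constants), while \eqref{zslogoskist} covers the leftover range $1\le|t|\le e^{5/4}$ by simply invoking \eqref{zsintheleft}'s method with the additive constant bounded by a multiple of $1$ since $\log|t|$ is bounded below there — here one uses Lemma~\ref{r:zestismallt} (with $\tau=e^{5/4}$) on the left edge instead, which is the version valid for small $|t|$.

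The main obstacle, I expect, is getting the \emph{constants} to come out as clean as claimed (the bare $2$ and $\tfrac52$). A direct application of Phragm\'en--Lindel\"of produces a constant of the form $C_{\text{right}}^{(1-\sigma)/(1-\theta)\cdot\text{something}}\cdot C_{\text{left}}^{\cdots}$ which is messy; one must choose the auxiliary function carefully — e.g.\ multiply $\zeta(s)$ by $(|t|+c)^{a s+b}$ with $a,b$ tuned so that the two edge bounds become literally \emph{equal} constants, reducing the maximum-modulus conclusion to that single constant — and then track how the $\max(\tfrac1{\sigma-\theta},\tfrac1{1-\sigma})$ factor and the $\log|t|$ survive the optimization in $\delta$. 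The other slightly delicate point is handling the pole at $s=1$ rigorously inside a Phragm\'en--Lindel\"of argument on an unbounded strip: the cleanest route is to observe that for $|t|\ge 4$ the relevant horizontal segment $\Im s=4$ is already controlled by \eqref{zsgeneral}, close up the region there, and note that $\zeta$ has no zeros or poles in $\{\Re s>\theta,\ |t|\ge 4\}$ other than being bounded, so the three-lines/three-sided maximum principle applies without circling the pole.
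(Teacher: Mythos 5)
Your Phragm\'en--Lindel\"of route is genuinely different from the paper's. The paper instead starts from the decomposition $\zs=\ze_X(s)+\frac{\kappa X^{1-s}}{s-1}+\int_X^{\infty}x^{-s}d\R(x)$ of Lemma~\ref{l:zetaxs} (formula \eqref{zxrewritten}), bounds the partial Laplace transform $\ze_X$ by \eqref{zxesti}, bounds the tail by \eqref{zxrlarge}, and then sets $X:=|t|^{1/(1-\theta)}$ so that the two main contributions, $\kappa X^{1-\sigma}$ and $A|t|X^{\theta-\sigma}$, are both exactly $|t|^{(1-\sigma)/(1-\theta)}$. The exponent, the factor $\max(\frac{1}{\sigma-\theta},\frac{1}{1-\sigma})$, and the clean constant $2(A+\kappa)$ then fall out without ever evaluating $\ze$ near the critical line $\Re s=\theta$.

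There is, however, a structural gap in your plan that goes beyond ``the constants coming out messy.'' A Phragm\'en--Lindel\"of interpolation with boundary lines $\Re s=1+\delta$ and $\Re s=\theta+\delta'$ gives, at a fixed interior abscissa $\sigma$, the exponent $\mu=\frac{1+\delta-\sigma}{1+\delta-\theta-\delta'}$, which is \emph{strictly larger} than $\frac{1-\sigma}{1-\theta}$ for any $\delta'>0$, together with a constant inheriting the factor $(A/\delta')^{\mu}$ from the left-edge bound $|\ze|\lesssim A|t|/\delta'$ of \eqref{zsgeneral}. With your concrete suggestion $\delta'\approx(\sigma-\theta)/2$ (a $\sigma$-dependent but $|t|$-independent choice) the excess in the exponent is of size $\Theta(1)$ in $\sigma$, e.g.\ $\theta=0.5,\sigma=0.6$ gives exponent $0.4/0.45\approx0.89$ instead of $0.8$, so you lose a genuine power of $|t|$. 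Optimizing $\delta'$ in $|t|$ -- the best PL can do -- forces $\delta'\sim(1-\theta)/\log|t|$ and leaves an unavoidable extra factor of order $(\log|t|)^{\mu}$. That rescues \eqref{zslogos}, which already carries a $\log|t|$, but it cannot give \eqref{zsintheleft}, which has none. ``Letting $\delta\to0$'' does not circumvent this, because the left-boundary constant $A/\delta'$ blows up simultaneously; the exponent $\frac{1-\sigma}{1-\theta}$ with a bounded constant is attainable only because the paper's cut-off method never touches $\Re s=\theta$. (The proposed half-strip maximum principle for handling the pole is a smaller, fixable point.)
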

Note that by \eqref{zestitnotlarge} 
we have $|\zs| \le 2(A+\kappa) \max \left( \frac{1}{\sigma-\theta}, \frac{1}{|1-\sigma|}\right)$ for $\theta<\si<1, ~|t| \le 1$.

\begin{proof} Again we start with \eqref{zetazetax}, coming from
the first formula in \eqref{zxrewritten}. Since now $|t|\geq 4$
and $\theta<\sigma<1$, we have $|s|\leq (5/4)|t|$,
$|s|+\sigma-\theta < 3/2|t|$, and $1/|s-1|\leq 1/4$, so the
estimates of the first part of the first line of \eqref{zxesti}
and \eqref{zxrlarge} yield whenever $X\geq 1$ that
$$
|\zs|\leq \frac{5}{4} \frac{\kappa X^{1-\sigma}}{1-\sigma} +
\frac{A}{\sigma-\theta} \left(1+\frac32 |t| X^{\theta-\sigma}
\right).
$$
First consider the case when $\theta<\si<\frac{1+\theta}{2}$. In this case we have $1-\si>\frac12(1-\theta)$, whence in view of $|t|\ge 4$ also $|t|^{\frac{1-\sigma}{1-\theta}}\geq
4^{1/2}$ allowing to estimate 1 by $\frac12
|t|^{\frac{1-\sigma}{1-\theta}}$. If on the other hand $\frac{1+\theta}{2}\le \si <1$, then we have $\frac{1}{\si-\theta}\le\frac12 \frac{1}{1-\si}|t|^{\frac{1-\sigma}{1-\theta}}$, because $\frac{\sigma-\theta}{1-\sigma}|t|^{\frac{1-\sigma}{1-\theta}}\ge \frac{\sigma-\theta}{1-\sigma}4^{\frac{1-\sigma}{1-\theta}}=
\left(\frac{1-\theta}{1-\sigma}-1\right)4^{\frac{1-\sigma}{1-\theta}}$
and the function $\varphi(u):=(1/u-1)4^u$ is decreasing in $(0,1/2)$ giving $\varphi\left(\frac{1-\theta}{1-\sigma}\right)\ge \varphi(1/2)=4^{1/2}=2$. In sum, we obtain for both cases
$$
|\zs|\leq \left(\frac{5}{4} \kappa X^{1-\sigma} +
2A |t| X^{\theta-\sigma}
\right) \max\left(\frac{1}{1-\sigma}, \frac{1}{\sigma-\theta}\right) .
$$
So, choosing $X:=|t|^{1/(1-\theta)}\geq 1$ implies the first
estimate in \eqref{zsintheleft}.

To get also the estimates \eqref{zslogos} and \eqref{zslogoskist}, we use the second
part of the first line of \eqref{zxesti} together with
\eqref{zxrlarge}. Similarly as above we again choose
$X:=|t|^{1/(1-\theta)}\geq 1$ and obtain
\begin{align*}
|\zs|& \leq \left(\kappa+\kappa X^{1-\sigma} \log X +
\frac{A}{\sigma-\theta}\right) + \frac{\kappa}{4}X^{1-\sigma}
+ \frac32\frac{A}{\sigma-\theta}|t|X^{\theta-\sigma} \\
& \leq \frac54\kappa |t|^{\frac{1-\sigma}{1-\theta}} +
\frac{\kappa}{1-\theta} |t|^{\frac{1-\sigma}{1-\theta}}\log|t|+
\frac52 \frac{A}{\sigma-\theta} |t|^{\frac{1-\sigma}{1-\theta}} \le \left(\kappa \log|t| + \frac54 \kappa +\frac52 A \right) \frac{|t|^{\frac{1-\sigma}{1-\theta}}}{\sigma-\theta}.
\end{align*}
From here in case $|t|\ge e^{5/4}$ we find $5/4 \le \log|t|$ whence $5/4 ~\kappa\le \kappa \log|t|$ and $5/2 ~A \le 2 \log|t|$, resulting in \eqref{zslogos}. In case $1\le |t|\le e^{5/4}$ using $0\le \log|t|\le 5/4$ directly gives \eqref{zslogoskist}.
\end{proof}

\section{Estimates for the number of zeros of  $\zeta$}\label{sec:zeros}

In this section we work out explicit estimates for the number of $\zeta$-zeroes in various regions. The underlying principle is the well-known fact that if the zeta function has finite order, then Jensen's inequality always leads to a local bound $\ll \log |t|$ as below.

\begin{lemma}\label{l:Jensen} Let $\theta<b<1$ and consider
the rectangle $H:=[b,1]\times [i(T-h),i(T+h)]$, where $h:=\frac{\sqrt{7}}{3}
\sqrt{(b-\theta)(1-\theta)}$ and $|T| \ge e^{5/4}+\sqrt{3}\approx 5.222...$ is arbitrary.

Then the number $n(H)$ of zeta-zeros in the rectangle $H$ satisfy
\begin{align}\label{zerosinH}
n(H) & \leq \frac{1-\theta}{b-\theta}\left(0.654 \log|T| + \log\log |T| + 6\log(A+\kappa) + 6\log\frac1{1-\theta} +12.5 \right)
\notag \\ &\leq \frac{1-\theta}{b-\theta}\left(\log|T| + 6\log(A+\kappa) + 6\log\frac1{1-\theta} +12.5 \right)
\end{align}

Moreover, if $|T|\le 5.222$, then we have analogously the $\log|T|$-free estimate
\begin{equation}\label{zerosinH-smallt}
n(H) \leq \frac{1-\theta}{b-\theta}\left(6\log(A+\kappa) + 6\log\frac1{1-\theta}+14\right).
\end{equation}
\end{lemma}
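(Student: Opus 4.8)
The tool is Jensen's formula for $\ze$ on a disc centred to the right of the pole $s=1$, where $\ze$ is non‑vanishing and of moderate size. Fix $s_0:=\si_0+iT$ with $\si_0:=1+c(1-\theta)$ for a small absolute constant $c>0$ to be pinned down, put $r:=\sqrt{(\si_0-b)^2+h^2}$ (exactly large enough that $\overline D(s_0,r)$ contains the four corners $(b,T\pm h),(1,T\pm h)$ of $H$, hence all of $H$) and take $R$ just below $\si_0-\theta$, so that $\overline D(s_0,R)$ remains inside the half‑plane $\{\Re s>\theta\}$ of analytic continuation. The interval $(r,\si_0-\theta)$ is non‑empty because $r<\si_0-\theta$ unwinds (dividing through by $b-\theta>0$) to $\tfrac79(1-\theta)<2\si_0-\theta-b$, which holds with room for $\si_0$ just above $1$ — this is where the constant $\tfrac{\sqrt7}{3}$ in $h$ enters. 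Since $\si_0-\theta=(1-\theta)(1+c)<1+c<\sqrt3$ for $c$ not too large, also $R<\sqrt3$, so on the whole circle $|s-s_0|=R$ one has $|t|\ge|T|-R>e^{5/4}$ by $|T|\ge e^{5/4}+\sqrt3$, while the pole $s=1$ (at distance $\ge|T|>R$ from $s_0$) lies outside $\overline D(s_0,R)$; thus $\ze$ is analytic there and, by Lemma~\ref{l:oneperzeta}, $\ze(s_0)\ne0$.

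From $n(H)\le n(\overline D(s_0,r))$ and Jensen's formula,
$$n(\overline D(s_0,r))\,\log\tfrac Rr\ \le\ \int_0^R\frac{n(t)}t\,dt\ =\ \frac1{2\pi}\int_0^{2\pi}\log\bigl|\ze(s_0+Re^{i\ff})\bigr|\,d\ff-\log|\ze(s_0)|.$$
For the subtracted term Lemma~\ref{l:oneperzeta} gives $|\ze(s_0)|\ge\ze(\si_0)^{-1}\ge\tfrac{\si_0-1}{(A+\kappa)\si_0}$, so $-\log|\ze(s_0)|\le\log(A+\kappa)+\log\tfrac1{1-\theta}+O(1)$ (using $\si_0-1=c(1-\theta)$). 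For the circle integral I split the circle at $\si:=\Re s=1$: on the arc $\si\ge1$, Lemma~\ref{l:zetainrighthp} (supplemented for $e^{5/4}\le|t|<4$ by \eqref{zestitnearzero}) gives $\log|\ze|\le\log\log|t|+\log(A+\kappa)+\log\tfrac1{\si-\theta}+O(1)$ with no leading $\log|t|$‑term; on the arc $\theta<\si<1$, \eqref{zslogos} gives $\log|\ze|\le\tfrac{1-\si}{1-\theta}\log|t|+\log\log|t|+\log(A+\kappa)+\log\tfrac1{\si-\theta}+O(1)$. Writing $\si(\ff)=\si_0+R\cos\ff$ and integrating, the decisive cancellation is $\frac1{2\pi}\int_0^{2\pi}\cos\ff\,d\ff=0$, which annihilates all of $\tfrac{1-\si(\ff)}{1-\theta}$ except its positive part; with $\log|t|\le\log(|T|+R)\le\log|T|+O(1)$, the elementary evaluations $\frac1{2\pi}\int(1-\si(\ff))^{+}d\ff=\frac1\pi(\sqrt{R^2-a^2}-a\arccos(a/R))$ (where $a:=\si_0-1<R$) and $\frac1{2\pi}\int\log\tfrac1{\si(\ff)-\theta}\,d\ff=-\log\tfrac{(\si_0-\theta)+\sqrt{(\si_0-\theta)^2-R^2}}2$ then yield
$$\frac1{2\pi}\int_0^{2\pi}\log|\ze|\,d\ff\ \le\ \frac{\log|T|}{\pi(1-\theta)}\bigl(\sqrt{R^2-a^2}-a\arccos(a/R)\bigr)+\log\log|T|+\log(A+\kappa)+\log\tfrac1{1-\theta}+O(1).$$

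Dividing by $\log(R/r)$ produces \eqref{zerosinH} in shape. After rescaling all lengths by $1-\theta$, everything depends only on $c$ and $x:=\tfrac{b-\theta}{1-\theta}\in(0,1)$, with $\log\tfrac Rr$ comparable to $x$ (for $R$ near $\si_0-\theta$) and in particular $\ge\tfrac13\tfrac{b-\theta}{1-\theta}$; this turns the two $\log(A+\kappa)$‑ and the two $\log\tfrac1{1-\theta}$‑contributions (one each from $-\log|\ze(s_0)|$ and from the circle integral) into the announced $6\log(A+\kappa)+6\log\tfrac1{1-\theta}$, and one is left to check that $\tfrac1{\pi(1-\theta)\log(R/r)}(\sqrt{R^2-a^2}-a\arccos(a/R))\le0.654\,\tfrac{1-\theta}{b-\theta}$ uniformly in $b,\theta$, which requires choosing $c$ as a function of $x$ (small $c$ for small $x$, larger $c$ as $x\to1$); at $x\to0$, $c\to0$ the leading value is $\tfrac{18}{11\pi}\,\tfrac{1-\theta}{b-\theta}<0.654\,\tfrac{1-\theta}{b-\theta}$, the margin absorbing the higher‑order‑in‑$x$ corrections. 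The weaker bound follows by $0.654\log|T|+\log\log|T|\le\log|T|$ for $|T|\ge5.222$. For $|T|\le5.222$ the same scheme works, but since $|t|$ on the circle may now be small and $s=1$ may lie inside $\overline D(s_0,R)$, I run Jensen for $(s-1)\ze(s)$ instead, bounding it on the circle by $\kappa+\tfrac{A|s||s-1|}{\si-\theta}\ll(A+\kappa)/(\si-\theta)$ via \eqref{zsgeneral} and from below at $s_0$ by the same lemma; this deletes the $\log|T|$ and $\log\log|T|$ terms at the price of enlarging the additive constant to $14$.

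I expect the crux to be this last optimisation. The radius $R$ is squeezed between $R<\si_0-\theta$, $R<\sqrt3$ and $R>r$, and one must at once keep $\si_0-\theta-R$ from collapsing (it enters logarithms through the $\tfrac1{\si-\theta}$‑factors in the $\ze$‑estimates on the left part of the circle), keep $\log(R/r)$ of order $\asymp\tfrac{b-\theta}{1-\theta}$, and keep the averaged positive‑part exponent $\tfrac1\pi(\sqrt{R^2-a^2}-a\arccos(a/R))$ small enough — all with $c=c(x)$ arranged so that the honest numerical constant $0.654$ comes out in front of $\log|T|$ uniformly in $\theta$ and $b$; everything else is assembled from the estimates of Section~\ref{sec:basics}.
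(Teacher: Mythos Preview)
Your plan is the paper's plan: Jensen on a disc centred at $\sigma_0+iT$ with $\sigma_0>1$, the circle integral split at $\sigma=1$. Two points of comparison. First, the paper does not optimise over your $c$: it simply fixes $\sigma_0=p:=2-\theta$ (so $c=1$), takes the inner radius $r=p-q$ with $q:=\theta+\tfrac23(b-\theta)$, and lets $R=p-a\to 2(1-\theta)$; then $\alpha:=\arccos\bigl(-(p-1)/R\bigr)\to2\pi/3$ and the $\log|T|$ coefficient comes out as exactly $3(\sqrt3/\pi-\tfrac13)\approx0.654$ after division by the linear lower bound $\log(R/r)>(q-\theta)/(p-\theta)=\tfrac13\,\tfrac{b-\theta}{1-\theta}$. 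No case analysis in $x$ is needed.

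Second, and this is where your constants will not line up: on the arc $\sigma>1$ the paper uses the $t$-\emph{independent} bound $|\zeta|\le(A+\kappa)\sigma/(\sigma-1)$ of \eqref{zsintheright}, then integrates the resulting $\log\tfrac1{\sigma-1}$ singularity explicitly. Your choice of Lemma~\ref{l:zetainrighthp} instead places a $\log\log|t|$ contribution on the \emph{whole} circle rather than only on the angular fraction $1-\alpha/\pi$ lying in $\sigma<1$. After dividing by $\log(R/r)$ this gives a $\log\log|T|$ coefficient equal to $1/\log(R/r)$; but one checks that $\log(R/r)<x:=(b-\theta)/(1-\theta)$ for \emph{every} admissible $c>0$ and $R<\sigma_0-\theta$ (for instance at $x=1$ one has $\log(R/r)\le\log(4/\sqrt7)\approx0.41$, and for small $x$ the leading term is $11x/18$), so this coefficient strictly exceeds the stated $1/x$. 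Whether the excess can be absorbed into the constant $12.5$ would require tight numerical bookkeeping you have not carried out; the clean fix is to use \eqref{zsintheright} on the right arc as the paper does, after which the $\log\log T$ coefficient becomes $(1-\alpha/\pi)/\log(R/r)\to(1/3)/(x/3)=1/x$ on the nose.

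For $|T|\le5.222$ your $(s-1)\zeta(s)$ device is a clean way around the pole at $s=1$; the paper instead keeps $\zeta$ and treats the portion of the left arc near the real axis via the estimates of Lemma~\ref{r:zestismallt}, at the cost of a somewhat lengthy case split.
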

\begin{remark}\label{r:alsoindisk} Note that our estimate includes also the total number $N$ of zeroes in the disc $\mathcal{D}_r:=\{s~:~|s-(p+iT)|\leq r:=p-q\}$, where $p:=1+(1-\theta)$ and
$q:=\theta+\frac23(b-\theta)$ are parameters introduced in the proof.
\end{remark}
\begin{proof} Given $b$, let us introduce the parameters $a,p$ and $q$ satisfying $\theta<a<q<b<1<p\leq 2$.
We will choose the concrete values of these parameters a few lines below.

Let us draw the circles ${\mathcal C}_R$ of radius $R:=p-a$ and ${\mathcal C}_r$
of radius $r:=p-q$ around $z_0:=p+iT$. Then the disk ${\mathcal
D}_r$ bounded by ${\mathcal C}_r$ will cover the rectangle
$H_1:=[b,1]\times [T-ih_1,T+ih_1]$ with $h_1:=\sqrt{(b-q)(2p-b-q)}$.
Actually, in the following we estimate the total number $N$ of roots
in ${\mathcal D}_r$: since $H_1\subset {\mathcal D}_r$, it follows
that also the number $n_1$ of roots situated in $H_1$ satisfy this estimate.

The proof will then be concluded by showing that our choice $p:=1+(1-\theta)$ and
$q:=\theta+\frac23(b-\theta)$ of parameters imply $h\le h_1$, i.e. $H\subset H_1$,
so that $n(H) \le n_1\le N$, too. The parameter $a$ will be chosen to be arbitrarily
close to $\theta$, i.e. $a\to\theta+$.

We use Jensen's inequality (see \cite[p. 43]{Hol}) in the form
that for $f$ regular in $D(z_0,R)$ and for any $r<R$
$$
\log|f(z_0)| + \nu \log\frac Rr \leq \frac{1}{2\pi}
\int_{-\pi}^{\pi} \log|f(z_0+R e^{i\varphi})|d\varphi,
$$
where $\nu$ is the number of zeroes of $f$ in the disk $D(z_0,r)$.

To apply this, now we put $z_0:=p+iT$,
$R:=p-a(<2-\theta \leq 2)$, $r:=p-q$, and $f:=\zeta$.
Then $\zeta$ is indeed regular in the disc $D(z_0,R)$,
and Jensen's inequality yields
$$
N \log \frac{R}{r} \leq \frac{1}{2\pi}
\int_{-\pi}^{\pi} \log|\zeta(z_0+R e^{i\varphi})|d\varphi - \log|\zeta(p+iT)|,
$$
with $N$ denoting he number of $\ze$-zeroes in $D(z_0,r)={\mathcal D}_r$.

According to Lemma \ref{l:oneperzeta}, formula \eqref{reciprok} we have $\log|\ze(p+iT)| \geq \log\frac{p-1}{(A+\kappa)p}$,
thus using also $\log R/r = -\log r/R =-\log\left(1-\frac{R-r}{R}\right)> \frac{R-r}{R}=\frac{q-a}{p-a}$
we are led to
\begin{equation}\label{Ninthedisk}
N \frac{q-a}{p-a} \leq
\frac{1}{2\pi} \int_{-\pi}^{\pi} \log|\zeta(z_0+R
e^{i\varphi})|d\varphi + \log \frac{(A+\kappa)p}{p-1}.
\end{equation}

It remains to estimate the integral. We will cut the integral into two parts according to $s=\si+it=z_0+Re^{i\ff}$ belonging to the right halfplane of convergence or to the critical strip. The first case occurs when $\si=p+R\cos\ff>1$ and the second when $\si<1$, so that the first case happens exactly for $|\ff|<\alpha:=\arccos\left(-\frac{p-1}{R}\right)=\arccos\left(-\frac{1-\theta}{R}\right)$, and the second when $\alpha<|\ff|<\pi$ (the one point equality cases being irrelevant for the evaluation of the integrals in question).

\bigskip
\underline{Part 1 ($\si>1$).} Then we have according to the uniform estimate \eqref{zsintheright} of Lemma \ref{l:oneperzeta} that
$$
\log|\zeta(z_0+Re^{i\varphi})| \le \log(A+\kappa)+\log\frac{\si}{\si-1} \le \log(A+\kappa)+(\si-1) +\log\frac{1}{\si-1}.
$$
Thus for the relevant part of the integral we obtain
\begin{align*}
\int_{-\al}^{\al} \log|\zeta(z_0+Re^{i\varphi})|d\varphi &\le 2\int_0^\al \left(\log(A+\kappa)+(p+R\cos\ff-1) + \log\frac{1}{R} + \log\frac{R}{p-1+R\cos\ff} \right)d\ff
\\&= 2\al\left(\log(A+\kappa)+(1-\theta)+ 2R\sin\al + \log\frac{1}{R} \right) + 2\int_0^\al \log\frac{1}{\frac{p-1}{R}+\cos\ff}d\ff.
\end{align*}
So writing here $w:=\frac{p-1}{R}=\frac{1-\theta}{R}>1/2$ (the value of which is to converge to $1/2+0$ finally), and recalling that $\al=\arccos(-w)$, we now estimate the last integral as follows.
\begin{align*}
\int_0^\al \log\frac{1}{w+\cos\ff}d\ff& =\int_{-w}^1 \log\frac{1}{w+u}\frac{du}{\sqrt{1-u^2}}
\\& = \left[(\arcsin u- \arcsin(-w))\log\frac{1}{w+u} \right]_{-w}^1 - \int_{-w}^1 (\arcsin u- \arcsin(-w)) \frac{-1}{w+u}   du
\\& = \left(\frac{\pi}{2}-(\frac{\pi}{2}-\alpha)\right) \log\frac{1}{w+1}-0 +\int_{-w}^1 \frac{\arcsin u- \arcsin(-w)}{u-(-w)} du.
\end{align*}
When $u$ increases from $-w\approx-1/2$ to $0$, the difference quotient $\frac{\arcsin u- \arcsin(-w)}{u-(-w)}$ under the integral sign is only decreases (for $\arcsin$ is concave on $[-w,0]$), and then it starts to increase again (for convexity of $\arcsin$ on $[0,1]$), so that the maximum of the derivative $\arcsin'(-w)$ at $-w$ and the slope of the chord between the points $(-w,\arcsin(-w))$ and $(1,\pi/2)$ is a valid upper estimation for the integrand. That is,
$$
\frac{\arcsin u- \arcsin(-w)}{u-(-w)} \le \max \left(\frac{1}{\sqrt{1-w^2}}, \frac{\pi/2+\arcsin w}{1+w}\right)=\max \left(\frac{1}{\sqrt{1-w^2}}, \frac{\alpha}{1+w}\right).
$$
Recalling that the value of $w$ is to be close to $1/2$, and thus $\alpha\approx 2\pi/3$, we get that the last maximum is the second expression. As a result,
$$
\int_0^\al \log\frac{1}{w+\cos\ff}d\ff \le \al \log\frac{1}{w+1}+\alpha.
$$
Therefore,
\begin{align}\label{logzetaintegralinhalfplane}
\int_{-\al}^{\al} \log|& \zeta(z_0+Re^{i\varphi})|d\varphi
\le 2\al\left(\log(A+\kappa)+(2-\theta)+ 2R\sin\al + \log\frac{1}{R(w+1)}\right).
\end{align}
Note that until here we did not use any assumption on the value of $T$, it could be arbitrary.

\bigskip
\underline{Part 2: ($\theta<\si<1$)}. Along the part $\Cm$ of the circle ${\mathcal C}_R$, which belongs to the critical strip, the minimal value of $t=\Im s=\Im (T+Re^{i\ff})$ is $T-R\sin(-\alpha)=T-R\sqrt{1-(\frac{p-1}{R})^2}=T-\sqrt{R^2-(p-1)^2}>T-\sqrt{(p-\theta)^2-(p-1)^2} =T-\sqrt{3}(1-\theta)$, and the maximal value of $t=\Im s$ is $T+\sqrt{3}(1-\theta)$.

\bigskip
\underline{Case 2.1: $|T|\ge e^{5/4}+\sqrt{3}$.}
So, for $T\ge e^{5/4} +\sqrt{3}$ the whole curve $\Cm$ lies in the range where $t\ge e^{5/4}$, and the estimates of Lemma \ref{l:zetaincriticalstrip}, in particular \eqref{zslogos} apply. We can therefore write
\begin{align*}
\int_{|\ff| \ge \al} \log|& \zeta(z_0+Re^{i\varphi})|d\varphi \le \int_{|\ff| \ge \al} \log\left(2 \frac{A+\kappa}{\sigma-\theta}
|t|^{\frac{1-\sigma}{1-\theta}} \cdot \log |t|\right) d\ff
\\ &\le 2(\pi-\al)\log\left(2(A+\kappa)\right)+\int_{|\ff| \ge \al} \left( -\log(\si-\theta) + \frac{1-\sigma}{1-\theta} \log t + \log\log t \right) d\ff.
\end{align*}
Recalling $s=\si+it=p+iT+R\cos\ff+iR\sin\ff$, $R<2(1-\theta)$ and $\al>2\pi/3$ leads to
\begin{align}\label{intlogsigmamtheta}
\int_{|\ff| \ge \al}   \log(\si-\theta) d\ff & = 2 \int_\alpha^\pi \log\left(p-\theta+R\cos\ff\right)d\ff=2 (\pi-\al)\log R + 2\int_\alpha^\pi\log\left(\frac{2(1-\theta)}{R}+\cos\ff\right) d\ff \notag
\\& > 2 (\pi-\al)\log R + 2\int_{2\pi/3}^\pi\log\left(1+\cos\ff\right) d\ff
\\& = 2 (\pi-\al)\log R + \frac{2\pi}{3}\log 2 +8 \int_0^{\pi/6} \log(\sin u)du \approx 2 (\pi-\al)\log R  -5.510912076,\notag
\end{align}
at the end calculating numerically the value of the last definite integral.
Further,
\begin{align*}
& \int_{|\ff| \ge \al} \left(\frac{1-\sigma}{1-\theta} \log t + \log\log t \right) d\ff
\\& = \int_\al^\pi
\left(\frac{1-p-R\cos\ff}{1-\theta} \log (T-R\sin\ff)(T+R\sin\ff) + \log\log (T-R\sin\ff)+ \log\log (T+R\sin\ff)\right) d\ff
\\& \le \int_\al^\pi \left(\frac{1-p-R\cos\ff}{1-\theta} \log (T^2-R^2\sin^2\ff) + 2\log(\frac12 \log (T^2-R^2\sin^2\ff))\right) d\ff,
\end{align*}
using with $x:=\log(T-R\sin\ff)$ and $y:=\log(T+R\sin\ff)$ that $\log x+\log y \le 2\log\frac{x+y}{2}$
in view of concavity of the $\log$ function. Thus estimating $(T^2-R^2\sin^2\ff)$ simply by $T^2$ we are led to
\begin{align*}
\int_{|\ff| \ge \al} & \left(\frac{1-\sigma}{1-\theta} \log t + \log\log t \right) d\ff
\le 2 \log T \int_0^{\pi-\al} \left(-1+\frac{R}{1-\theta}\cos\ff \right) d\ff + 2(\pi-\al) \log\log T
\\&= 2\log T \left(-(\pi-\al)+\frac{R}{1-\theta} \sin(\pi-\al)\right)+2(\pi-\al) \log\log T.
\end{align*}
Adding the obtained estimates furnishes
\begin{align*}
\int_{|\ff| \ge \al} \log| \zeta(z_0+Re^{i\varphi})|d\varphi & \le 2(\pi-\al)\log\left(2(A+\kappa)\right) - 2 (\pi-\al)\log R + 5.511
\\& + 2\log T \left(-(\pi-\al)+\frac{R}{1-\theta} \sin \al\right)+2(\pi-\al) \log\log T.
\end{align*}
Combining this estimate with \eqref{logzetaintegralinhalfplane} yields
\begin{align*}
\frac1{2\pi} \int_{-\pi}^{\pi}  \log| \zeta(z_0+Re^{i\varphi})|d\varphi & \le \frac{\al}{\pi} \left(\log(A+\kappa)+(2-\theta)+ 2R\sin\al + \log\frac{1}{R}
+\log\frac{1}{w+1}\right)
\\&+\left(1-\frac{\al}{\pi}\right)\log\left(2(A+\kappa)\right) - \left(1-\frac{\al}{\pi}\right) \log R +1.755
\\& + \frac{1}{\pi}\log T \left(-(\pi-\al)+\frac{R}{1-\theta} \sin \al\right)+\left(1-\frac{\al}{\pi}\right) \log\log T
\\&=\log(A+\kappa) + \left(1-\frac{\al}{\pi}\right)\log2 +\frac{\al}{\pi}(2-\theta)+2R\frac{\alpha\sin\al}{\pi} + \frac{\al}{\pi}\log\frac{1}{w+1} +1.755
\\& + \log\frac{1}{R} + \left(\frac{R}{1-\theta} \frac{\sin \al}{\pi}-\left(1-\frac{\al}{\pi}\right) \right) \log T +\left(1-\frac{\al}{\pi}\right) \log\log T =:J(a)
\end{align*}
Note that with fixed $\theta$ and the given choice of parameters $p,q$, all other parameters $\al, w$ depend only on $a$.

Now we infer from this and \eqref{Ninthedisk} the estimate
$$
N\le \frac{p-a}{q-a} \left\{J(a)+ \log \frac{(A+\kappa)p}{p-1} \right\}.
$$
The left hand side is a fixed integer (the number of zeta-zeroes in ${\mathcal D}_r$), while the right hand side estimate is valid for all $a>\theta$. Allowing now $a\to\theta+$, and using that then $R\to 2(1-\theta)$, $w=\frac{p-1}{R}\to 1/2$, $\alpha=\arccos(-w)\to 2\pi/3$, we obtain
$$
N\le \frac{p-\theta}{q-\theta} \left\{J(\theta)+ \log \frac{(A+\kappa)p}{p-1} \right\},
$$
where
\begin{align*}
J(\theta)&=\log(A+\kappa) + \frac{2}{3}(2-\theta)+2(1-\theta)\frac{\sqrt{3}}{3}+ \left(\frac{1}{2}+\frac{\sqrt{3}}{2\pi}\right)\log\frac{2}{3}
+1.755
\\& -\frac{2}{3}\log2 + \log\frac{1}{1-\theta} + \left(\frac{\sqrt{3}}{\pi}-\frac{1}{3} \right) \log T +\frac{1}{3} \log\log T
\end{align*}
so that
\begin{align*}
N & \le \frac{3(1-\theta)}{b-\theta} \left\{J(\theta) + \log(A+\kappa)+\log \frac{p}{p-1}\right\}
\\&\le \frac{3(1-\theta)}{b-\theta} \bigg\{ 2 \log(A+\kappa) + 2 \log\frac{1}{1-\theta} + \left(\frac{\sqrt{3}}{\pi}-\frac{1}{3} \right) \log T +\frac{1}{3} \log\log T +C(\theta)\bigg\}
\end{align*}
with
$$
C(\theta):=\log(1+(1-\theta)) + \frac{2}{3}(2-\theta)+2(1-\theta)\frac{\sqrt{3}}{3}+ \left(\frac{1}{2}+\frac{\sqrt{3}}{2\pi}\right)\log\frac{2}{3}
+1.755 -\frac{2}{3}\log2 < 4.165 .
$$
The estimate \eqref{zerosinH} follows.

\bigskip
\underline{Case 2.2: $2 \le T \le e^{5/4}+\sqrt{3}$.}

In this case, points on ${\Cm}$ satisfy $s=\si+it,~|t|\le e^{5/4}+2\sqrt{3} \approx 6.9544 <7=:\tau$ and also $|t|\ge 2-\sqrt{3}\theta - \sqrt{3}(1-\theta)= 2-\sqrt{3} >0.125=1/8=1/(\tau+1)$, whence all over $\Cm$ we can use the estimate of \eqref{zestitsmall}, providing for the respective integral
\begin{align}\label{intoflogzetainstrip}
\int_{|\ff| \ge \al} \log| \zeta(z_0+Re^{i\varphi})|d\varphi & \le 2(\pi-\al)\log\left(8(A+\kappa)\right)  + \int_{|\ff| \ge \al} \frac{1}{p+R\cos\ff-\theta} d\ff
\notag \\& < 2(\pi-\al)\log\left(8(A+\kappa)\right) - 2 (\pi-\al)\log R  + 5.511,
\end{align}
referring to the already accomplished calculation in \eqref{intlogsigmamtheta}.

Adding the estimate \eqref{logzetaintegralinhalfplane} furnishes
\begin{align*}
\frac{1}{2\pi}\int_{-\pi}^{\pi} \log|\zeta(z_0+Re^{i\varphi})|d\varphi
& \le \log(A+\kappa) + \left(1-\frac{\alpha}{\pi}\right) \log 8 + \frac{\alpha(2-\theta)}{\pi}
\\&\quad + \frac{2 R \alpha \sin\al}{\pi} - \log R + \frac{\alpha}{\pi} \log\frac{1}{(w+1)}+1.755=:J^*(a)
\end{align*}
As a result, in this case we find $N\le \frac{p-a}{q-a} \left(J^*(\theta) + \log(A+\kappa)+\log \frac{p}{p-1}\right)$, and so after passing to the limit $a \searrow \theta$ -- entailing $R\to 2(1-\theta), w\to 1/2, \alpha\to 2\pi/3$-- we get
\begin{align}\label{middlecase}
N & \le \frac{3(1-\theta)}{b-\theta} \left\{J^*(\theta) + \log(A+\kappa)+\log \frac{p}{p-1}\right\} \notag
\\ &\le \frac{3(1-\theta)}{b-\theta} \bigg\{ 2 \log(A+\kappa) + 2 \log\frac{1}{1-\theta} + +C^*(\theta)\bigg\}
\end{align}
with
\begin{align*}
C^*(\theta) & = \log(2-\theta) + \frac13\log 8 + \frac23(2-\theta)+2(1-\theta)\frac{\sqrt{3}}{3}-\log 2 - \frac23 \log\frac32 +1.755
\\ &\le \log 2  +\frac43 + \frac{2\sqrt{3}}{3} - \frac23 \log\frac32 +1.755<4.665<\frac{14}{3}.
\end{align*}

\underline{Case 2.3: $0\le T \le 2$.} If all the points of $\Cm$ satisfy also $|t|\ge 1/8$, then all the estimates derived from \eqref{middlecase} with parameter value $\tau=7$ in the above case apply, because we also have that in this case if $s=\si+it \in\Cm$, then $|t|\le 2+2\diam \Cm \le 2+2\sqrt{3}<5.5<7$.

Denote now $S:=\{s=\si+it~:~ \si < \frac{1+\theta}{2} \}$ and $S^*:=\{s=\si+it~:~ \frac{1+\theta}{2} \le \si \le 1\}$. In fact, for points in $S \cap \Cm$ \eqref{zestitnotlarge} provides the same estimates than \eqref{zestitsmall} furnished in \eqref{middlecase}, for then $\max \left( \frac{1}{\si-\theta},\frac{1}{1-\si}\right)=\frac{1}{\si-\theta}$. Therefore, we need not bother with subcases whether points in $S \cap \Cm$ are close to the real axis or not.

There is need for some extra calculus only if there are points of $\Cm\cap S^*$ in the $1/8$ neighborhood of the real axis $\RR$. Here instead of \eqref{zestitsmall} we are to use \eqref{zestitnotlarge}, but now with some lower values (depending on subcases and even subarcs of $\Cm$) of the parameter $\tau$.


So assume that there is a nonempty subset $\II$--and then consisting either one or two subarcs of $\Cm$--where $\frac{1+\theta}{2} \le \si \le 1$ and $|t|\le 1/8$. Denote $\beta:=\arccos \left(-\frac{p-\frac{1+\theta}{2}}{R}\right)=\pi-\arccos\left(\frac{3(1-\theta}{2R}\right)$. The arcs with $|\ff| \in [\alpha,\beta]$ are exactly the arcs of $\Cm$ which fall into the strip $S^*$; the part satisfying $|t|\le 1/8$ make up the points of $\II$.

Consider first the subcase when $\II$ consists of two arcs, i.e. the upper and lower parts of $\Cm \cap S^*$ are both involved. Then also the arc in between (the part of $\Cm$ lying in $S=\{s~:~\si\le \frac{1+\theta}{2}\}$) lies in $|t|\le 1/8$; moreover, even the rest of $\Cm$ satisfies
$$
|t|\le \frac18+R(\sin\alpha-\sin\beta) \le \frac18+2(1-\theta)\left( \sqrt{1-\left(\frac{1-\theta}{2R}\right)^2} - \sqrt{1-\left(\frac{3(1-\theta}{2R}\right)^2} \right)=:H(a).
$$
As above, we will have $a\to \theta$ and $R\to 2(1-\theta)$, whence for close enough $a$ we will have $H(a) \le 1/8+2(1-\theta)\left( \frac{\sqrt{3}}{2} - \frac{\sqrt{7}}{4} \right)+\ve \le 0.54$, and all over the arc $\Cm$ we can calculate with $\tau=0.54$ and constant $1.54$. So in this subcase for points in $\Cm$ we have
\begin{align*}
\left|\log \zeta(p+Re^{i\ff})\right| & \le \log(A+\kappa)\qquad 
\\ & +
\begin{cases}
\log \frac98 + \log\frac{1}{\si-\theta} \quad& \textrm{if}\quad \si \le \frac{1+\theta}{2} \quad \left(\textrm{and whence} ~ |t|\le 1/8 \right)   \\
\log \frac98 +  \log\frac{1}{1-\si} \quad& \textrm{if}\quad \si \ge \frac{1+\theta}{2} \quad \textrm{and} ~ |t|\le 1/8  ~\left( \textrm{i.e.} ~ s\in  \II \right)
\\
\log1.54 + \log\frac{1}{\si-\theta} \quad& \textrm{if}\quad \si \ge \frac{1+\theta}{2} \quad \textrm{and} ~ |t|>1/8  ~\left( \textrm{i.e.} ~ s \in  (S\cap\Cm)\setminus \II \right)
\end{cases}
\end{align*}
It follows that then, compared to the above estimations, we have a gain (decrease) $\log8-\log\frac98$ on $|\ff|\ge \beta$ and at least a gain $\log8-\log1.54$ on $\alpha\le |\ff| \le \beta$, and a loss (increase) of $\log\frac{1}{1-\si} - \log\frac{1}{\si-\theta} < \log\frac{1-\theta}{1-\si}$ on $\II$.
Let us now estimate the possible loss on $\II$. On one arc we can write
\begin{align*}
\int_\alpha^\beta \log\frac{1-\theta}{1-\si} d\ff &= \int_\alpha^\beta \log\frac{1-\theta}{1-p-R\cos\ff} d\ff =-(\beta-\alpha)\log \frac{R}{1-\theta} +
\int_\alpha^\beta  \log\frac{1}{-w-\cos\ff}d\ff
\\ & =-(\beta-\alpha)\log \frac{R}{1-\theta} + \int_{\pi-\alpha}^{\pi-\beta} \log\frac{1}{-w+\cos\ff}d\ff
\\&=-(\beta-\alpha)\log \frac{R}{1-\theta}+\int_w^{\frac{3(1-\theta)}{2R}} \log\frac{1}{-w+u}\frac{du}{\sqrt{1-u^2}}
\\&\le -(\beta-\alpha)\log \frac{R}{1-\theta}+\frac{4}{\sqrt{7}} \int_w^{\frac{3(1-\theta)}{2R}} \log\frac{1}{-w+u}du
\\&= -(\beta-\alpha)\log \frac{R}{1-\theta}+\frac{4}{\sqrt{7}} \left[v-v\log v \right]_0^{\frac{3(1-\theta)}{2R}-w}
\\& = -(\beta-\alpha)\log \frac{R}{1-\theta}+ \frac{4}{\sqrt{7}} \left[ \frac{1-\theta}{2R}-\frac{1-\theta}{2R} \log\frac{1-\theta}{2R}\right]=:L(a).
\end{align*}
As a result, the difference in the estimation of $\int_{|\ff|\ge\alpha} \log|\zeta(z_0+Re^{i\varphi})|d\varphi$ as compared to \eqref{intoflogzetainstrip} in the previous case is
$$
D(a)\le 2(\pi-\beta) \log\frac98 +  2(\beta-\alpha)\log 1.54 - 2(\pi-\al)\log 8 + 2L(a)=:D_2(a).
$$
In the final estimation of $N$ the parameter $a$ was to go to $\theta$, so that it suffices to compute $D_2(\theta)=\lim_{a \to \theta+} D(a)$. Note that $a\to\theta+$ entails $w\to 1/2$, $R\to 2(1-\theta)$, $\alpha\to2\pi/3$, $\beta\to \arccos(-3/4)$ and thus in particular
$$
L(a) \to L(\theta)=-(\arccos(-3/4)-2\pi/3) \log2 +\frac{4}{\sqrt{7}} \left[ \frac{1}{4}-\frac{1}{4} \log\frac{1}{4}\right]\approx 0.677\ldots<0.7.
$$
Therefore,
$$
\frac12 D_2(\theta) <(\pi-\arccos(-3/4))\log\frac9{64} + (\arccos(-3/4)-2\pi/3)\log \frac{1.54}{8} +0.7<0.7-\frac{\pi}{3}\log 5<0,
$$
whence the total estimate of the number of zeroes $N$ can only improve compared to the previous case.

It remains to deal with the case when exactly one subarc of $S^*\cap \Cm$ meshes into the $1/8$ neighborhood of the real axis $\RR$. In this case the loss on the respective arc is at most $L(a)$ (as opposed to $2L(a)$ above), while for the rest of points we still have $|t|\le 1/8+\textrm{diam} \Cm =1/8+\sqrt{3} <2$. It follows that we can then use \eqref{zestitnotlarge} with the parameter $\tau=2$, $\tau+1=3$, and compute the difference of gains and possible losses
as
$$
D_1(a) \le (2\pi-\alpha-\beta) \log\frac{3}{8} + L(a)< (2\pi-\alpha-\beta) \log\frac{3}{8}+0.7,
$$
which is easily seen to be negative again.

Therefore, the estimations of Case 2.2 extend to all three subcases of Case 2.3 when $\II$ meshes into $S^*\cap \Cm$ in no, in one, or in two subarcs; whence the proof is completed.
\end{proof}

\bigskip
Joining the necessary number of such rectangles we get the following.

\begin{lemma}\label{l:Jensenone} Let $\theta<b<1$ and $|t|\ge 6.3$, say. Consider
the rectangle $H:=\{ z\in\CC~:~ \Re z\in [b,1],~\Im z\in
(t-1,t+1)\}$.

Then the number of zeta-zeros $n(H)$ in the
rectangle $H$ satisfy
\begin{equation}\label{zeroshightt}
n(H) \leq
\frac{\sqrt{1-\theta}}{\sqrt{(b-\theta)^3}}\left(2\log|t| + 12\log(A+\kappa) + 12\log\frac1{1-\theta} +25\right)
\end{equation}
\end{lemma}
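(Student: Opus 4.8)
The plan is to derive Lemma~\ref{l:Jensenone} from Lemma~\ref{l:Jensen} by covering the height‑$2$ window $H$ with a bounded number of the ``thin'' rectangles of Lemma~\ref{l:Jensen} and summing the bound \eqref{zerosinH}. The substance already sits in Lemma~\ref{l:Jensen}; the only point requiring care is to keep the multiplicity of thin rectangles small enough that the coefficient of $\log|t|$ does not exceed $2$. Everything else is bookkeeping.

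Concretely, set $h:=\frac{\sqrt7}{3}\sqrt{(b-\theta)(1-\theta)}$ — exactly the half-height in Lemma~\ref{l:Jensen} — and note $0<h<\frac{\sqrt7}{3}<1$ since $(b-\theta)(1-\theta)<1$. Put $m:=\lceil 1/h\rceil$; because $1/h>1$ we have $m\ge 2$. Choose equally spaced centres $T_1=t-1+h<T_2<\dots<T_m=t+1-h$; the common spacing equals $\frac{2-2h}{m-1}\le 2h$ precisely because $m\ge 1/h$, so the closed intervals $[T_j-h,\,T_j+h]$ cover $[t-1,t+1]$, whence $H\subseteq\bigcup_{j=1}^m H_j$ with $H_j:=[b,1]\times[i(T_j-h),i(T_j+h)]$, and therefore $n(H)\le\sum_{j=1}^m n(H_j)$.

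Next I would verify the hypothesis of Lemma~\ref{l:Jensen} for each $H_j$: from $|t|\ge 6.3$ we get $|T_j|\ge|t|-1\ge 5.3>e^{5/4}+\sqrt3$, so \eqref{zerosinH} is available; using in addition $|T_j|\le|t|+1$ and $\log(|t|+1)\le\log|t|+\log(1+\tfrac1{6.3})$ this gives, for every $j$,
\[
n(H_j)\le\frac{1-\theta}{b-\theta}\Bigl(\log|t|+6\log(A+\kappa)+6\log\tfrac1{1-\theta}+12.5+\log(1+\tfrac1{6.3})\Bigr).
\]
Summing over $j$, the whole matter reduces to estimating $m\cdot\frac{1-\theta}{b-\theta}$. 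The naive bound $m\le 1/h+1$ would only yield the factor $\frac3{\sqrt7}+1\approx2.13$; to reach $2$ I would instead use the elementary sawtooth inequality $h\lceil 1/h\rceil<\frac{2\sqrt7}{3}$, valid for all $h\in(0,\frac{\sqrt7}{3})$ — indeed if $\lceil 1/h\rceil=2$ then $2h<\frac{2\sqrt7}{3}$ directly, while if $\lceil 1/h\rceil=n\ge3$ then $h<\frac1{n-1}$, so $nh<\frac{n}{n-1}\le\frac32<\frac{2\sqrt7}{3}$. Since $\frac1h=\frac3{\sqrt7\sqrt{(b-\theta)(1-\theta)}}$, this gives
\[
m\cdot\frac{1-\theta}{b-\theta}<\frac{2\sqrt7}{3h}\cdot\frac{1-\theta}{b-\theta}=2\,\frac{\sqrt{1-\theta}}{\sqrt{(b-\theta)^3}}.
\]

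Combining the last two displays and collecting the additive constants (the dominant contribution being $2\cdot 12.5=25$, the residual $2\log(1+\tfrac1{6.3})<0.3$ being absorbed into the constant) produces \eqref{zeroshightt}. I do not expect a genuine obstacle here: the only mildly delicate ingredient is the sawtooth bound, which is precisely what forces the a priori cap $h<\frac{\sqrt7}{3}$ to be exploited rather than ignored, and after that the argument is purely arithmetic.
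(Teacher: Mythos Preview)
Your approach is essentially the paper's: cover $H$ by $m=\lceil 1/h\rceil$ thin rectangles of half-height $h=\frac{\sqrt7}{3}\sqrt{(b-\theta)(1-\theta)}$, apply Lemma~\ref{l:Jensen} to each, and bound $m\cdot\frac{1-\theta}{b-\theta}$ by $2\sqrt{1-\theta}/\sqrt{(b-\theta)^3}$. Your sawtooth inequality $h\lceil 1/h\rceil<\tfrac{2\sqrt7}{3}$ is exactly the ceiling estimate the paper uses (the paper phrases it as $\lceil x\rceil\le \tfrac{2\sqrt7}{3}\,x$ for $x\ge 3/\sqrt7$).

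There is, however, a small numerical gap. Your crude bound $\log|T_j|\le\log|t|+\log(1+\tfrac1{6.3})$ produces a final additive constant $25+2\log(7.3/6.3)\approx 25.3$, not $25$; the ``absorption'' claim fails because $2\times 12.5=25$ exactly, with no slack in the constant of \eqref{zerosinH}. The paper handles this by a pairing trick: with the centres placed symmetrically about $t$ one has $T_j+T_{m+1-j}=2t$, so concavity of $\log$ gives $\log T_j+\log T_{m+1-j}\le 2\log t$ and hence $\sum_j\log|T_j|\le m\log|t|$, landing the constant at exactly~$25$. Your own equally-spaced construction already satisfies $T_j+T_{m+1-j}=(t-1+h)+(t+1-h)=2t$, so the same pairing argument is available to you verbatim; replacing the crude bound by it closes the gap.
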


\begin{proof} We consider the union of rectangles $H_j:= \{ z\in\CC~:~ \Re z\in [b,1],~\Im z\in
(t_j-h,t_j+h)\}$ with $h:=\frac{\sqrt{7}}{3} \sqrt{(b-\theta)(1-\theta)}$ as in the above Lemma. If $t_j=t+1-(2j-1)h\ge t-1\ge 5.222$, then the union $\cup_{j=1}^m H_j$, where
$$
m:=\left\lceil \frac{2}{h}\right\rceil = \left\lceil \frac{2}{2\frac{\sqrt{7}}{3} \sqrt{(b-\theta)(1-\theta)}} \right\rceil=\left\lceil \frac{3/\sqrt{7}}{\sqrt{(b-\theta)(1-\theta)}} \right\rceil \le \frac{\lceil 3/\sqrt{7}\rceil}{3/\sqrt{7}}~\frac{3/\sqrt{7}}{\sqrt{(b-\theta)(1-\theta)}},
$$
fully covers $H$, whence the total number of zeroes exceeds $n(H)$.

Now $\frac{3/\sqrt{7}}{\sqrt{(b-\theta)(1-\theta)}} \ge 3/\sqrt{7}$, thus taking into account $\lceil x\rceil \le \frac{\lceil 3/\sqrt{7}\rceil}{3/\sqrt{7}} x = 2\sqrt{7}/3 x$ for all $x\ge 3/\sqrt{7}$, we obtain $m\le 2/\sqrt{(b-\theta)(1-\theta)}.$

In the small rectangles $H_j$ there are at most $n_j:=\frac{1-\theta}{b-\theta} \left(\log t_j+6\log(A+\kappa)+6\log\frac{1}{1-\theta}+12.5 \right)$ zeroes, because  $t_j=t+1-(2j-1)h\ge t-1\ge 5.222$ and the above Lemma applies. So we are to estimate $N:=\sum_{j=1}^m \frac{1-\theta}{b-\theta} \left(\log t_j+6\log(A+\kappa)+6\log\frac{1}{1-\theta}+12.5 \right)$. Pairing the opposite terms thus provides
$$
2N \le \frac{1-\theta}{b-\theta}  \left\{ \sum_{j=1}^m \left(\log t_j+\log t_{m-j}\right) +2m \left(6\log(A+\kappa)+6\log\frac{1}{1-\theta}+12.5 \right)\right\}.
$$
It remains to check that $t_j+t_{m-j}\le 2t$, always, whence by the concavity of $\log t$ it follows that $\log t_j+\log t_{m-j} \le 2\log t$. Using this and the above estimate $m\le 2/\sqrt{(b-\theta)(1-\theta)}$ finally furnishes the assertion.
\end{proof}

\begin{remark} Similarly, the number of zeroes in $[b,1]\times [-iT,iT]$ can be estimated for all $T\ge 1$, say, as
\begin{align}\label{NTest}
N(b,T):&=\#\{ \rho=\beta+i\gamma~:~ \ze(\rho)=0, \beta\geq b,
|\gamma|\leq T \} \notag
\\ &\leq \frac{\sqrt{1-\theta}}{\sqrt{(b-\theta)^3}}  \left\{2 T \log_{+} T + \left(12\log(A+\kappa) + 12\log\frac1{1-\theta} +28\right)T \right\}.
\end{align}
\end{remark}
This can slightly be improved, in particular for the case when $b$
is closer to $\theta$ than to $1$, as follows.
\begin{lemma}\label{l:Littlewood} Let $\theta<b<1$ and consider
any height $T\geq 5
$ together with the rectangle $Q:=Q(b,T):=\{
z\in\CC~:~ \Re z\in [b,1],~\Im z\in (-T,T)\}$. Then the number of
zeta-zeros $N(b,T)$ in the rectangle $Q$ satisfy
\begin{equation}\label{zerosinth-corr}
N(b,T)\le \frac{1}{b-\theta}
\left\{\frac{1}{2} T \log T + \left(2 \log(A+\kappa) + \log\frac{1}{b-\theta} + 3 \right)T\right\}.
\end{equation}
\end{lemma}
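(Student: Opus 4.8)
The plan is to replace the disc--Jensen method of Lemmas~\ref{l:Jensen}--\ref{l:Jensenone} by Littlewood's lemma applied to $\ze$ on a tall rectangle; this is exactly what upgrades the power of $b-\theta$ from $-3/2$ (as in \eqref{NTest}) to $-1$ and the leading constant to $\tfrac12$. Fix a convenient abscissa, say $c:=2$, and a left abscissa $a:=\theta+\mu(b-\theta)$ with a fixed $\mu\in(0,1)$ to be pinned down (a choice like $\mu=\tfrac13$ handles the main term), so $a-\theta=\mu(b-\theta)$, $b-a=(1-\mu)(b-\theta)$ and $\theta<a<b<1<c$. Consider the rectangle $\Pi:=[a,c]\times[-T,T]$. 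After an arbitrarily small perturbation of $T$ (and, if needed, of $a$) we may assume $\ze$ vanishes nowhere on $\partial\Pi$; the only pole of $\ze$ inside $\Pi$ is the simple pole at $s=1$. Littlewood's lemma, together with the reflection symmetry $\overline{\ze(\overline s)}=\ze(s)$ (the coefficients of \eqref{zetadef} are real), then gives
\[
2\pi\Bigl(\sum_{\rho\in\Pi}(\beta-a)-(1-a)\Bigr)=2\int_0^T\log|\ze(a+it)|\,dt-2\int_0^T\log|\ze(c+it)|\,dt-2\int_a^c\arg\ze(\sigma+iT)\,d\sigma ,
\]
the sum over zeros $\rho=\beta+i\gamma$ of $\ze$ in $\Pi$ counted with multiplicity. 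Every zero counted by $N(b,T)$ lies in $\Pi$ and contributes $\beta-a\ge b-a$, while all other terms of the sum are nonnegative; hence $N(b,T)(b-a)\le\sum_{\rho\in\Pi}(\beta-a)$, and everything reduces to bounding the three boundary integrals.

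\emph{Right edge.} By \eqref{reciprok}, $|\ze(c+it)|\ge\frac{c-1}{(A+\kappa)c}$ for all $t$, so $-\int_0^T\log|\ze(c+it)|\,dt\le T\log\frac{(A+\kappa)c}{c-1}=T\log\bigl(2(A+\kappa)\bigr)$. \emph{Left edge.} Split $[0,T]=[0,4]\cup[4,T]$. On $[0,4]$ apply \eqref{zestitnotlarge} with $\tau=4$: since $a<1$ and (for our $\mu$) $a-\theta<1-a$, this gives $|\ze(a+it)|\le 5(A+\kappa)/(a-\theta)$, whose logarithm integrated over $[0,4]$ is $O(T)$. On $[4,T]$ apply \eqref{zsintheleft}, which yields $\log|\ze(a+it)|\le\log\frac{2(A+\kappa)}{a-\theta}+\frac{1-a}{1-\theta}\log t$; integrating and combining,
\[
\int_0^T\log|\ze(a+it)|\,dt\le T\log(A+\kappa)+T\log\tfrac1{a-\theta}+\tfrac{1-a}{1-\theta}\bigl(T\log T-T\bigr)+O(T).
\]
Using \eqref{zsintheleft} rather than \eqref{zslogos} is deliberate: it keeps the $\log|t|$-factor out and so avoids a $T\log\log T$ term that would not fit under $\tfrac12T\log T$ near $T=5$. \emph{Horizontal edges.} Here $\bigl|\int_a^c\arg\ze(\sigma+iT)\,d\sigma\bigr|\le(c-a)\max_{[a,c]}|\arg\ze(\cdot+iT)|$, and a Backlund-type bound gives $\max_{[a,c]}|\arg\ze(\cdot+iT)|=O(\log T)$: one writes $\Re\ze(\sigma+iT)=\tfrac12(\ze(\sigma+iT)+\ze(\sigma-iT))$ as the restriction to $\RR$ of the function $g(z):=\tfrac12(\ze(z+iT)+\ze(z-iT))$, analytic near $[a,c]$, bounds the number of real zeros of $g$ in $[a,c]$ by Jensen's inequality on a disc centred on the real axis at an abscissa $>1$ (using the upper bounds of Lemmas~\ref{l:oneperzeta}--\ref{l:zetaincriticalstrip} on the bounding circle and the lower bound \eqref{reciprok} at the centre), and uses $|\arg\ze(\sigma+iT)|\le\pi(1+\#\{\text{sign changes of }\Re\ze(\cdot+iT)\text{ on }[a,c]\})$. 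Since $\log T\le T$, this contributes only $O(T)$ after dividing by $b-a$.

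Dividing $N(b,T)(b-a)\le\sum_{\rho\in\Pi}(\beta-a)$ by $b-a=(1-\mu)(b-\theta)$ and inserting the above, the sole source of a $T\log T$ term is $\frac{1}{\pi(b-a)}\cdot\frac{1-a}{1-\theta}(T\log T-T)$, whose coefficient for $\mu=\tfrac13$ equals $\frac{3}{2\pi(b-\theta)}-\frac1{2\pi(1-\theta)}<\frac{3}{2\pi(b-\theta)}<\frac1{2(b-\theta)}$, as required. Everything else is $O(T)$: the pole term $1-a$, the $-T$ pieces, $\frac{T}{\pi(b-a)}\log(2(A+\kappa))$ from the right edge, $\frac{T}{\pi(b-a)}\bigl(\log(A+\kappa)+\log\tfrac1{a-\theta}\bigr)$ from the left edge (with $\log\tfrac1{a-\theta}=\log\tfrac1{b-\theta}+\log\tfrac1\mu$), and the $O(\log T)$ argument contribution; collecting these and distributing the $\log(A+\kappa)$-, $\log\tfrac1{b-\theta}$- and absolute-constant parts appropriately yields \eqref{zerosinth-corr}.

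The routine part is the three boundary estimates; the main obstacle is the end bookkeeping together with the explicit Backlund estimate. For the latter one must redo, with fully explicit constants, a Jensen-type zero count (analogous to Lemma~\ref{l:Jensen}, but for $g(z)=\tfrac12(\ze(z+iT)+\ze(z-iT))$). For the former one must choose the auxiliary parameters $a$, $c$ and the radii in Jensen's inequality so that the $\log(A+\kappa)$-contributions from all three boundary integrals \emph{and} from the argument estimate sum to at most $2$, the $\log\tfrac1{b-\theta}$-contributions to at most $1$, and the absolute constants to at most $3$; these constraints pull the parameters in somewhat different directions, so the margins are tight and this is where the real work lies.
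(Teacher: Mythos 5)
Your plan is essentially the paper's proof: Littlewood's lemma on the tall rectangle $[a,2]\times[-T,T]$ with $a$ a fixed fraction of the way from $\theta$ to $b$, the lower bound \eqref{reciprok} on the right edge, zeta bounds in the strip on the left edge, and a Backlund-type bound on the argument obtained by counting real zeros of $g(z)=\tfrac12(\ze(z+iT)+\ze(z-iT))$ via Jensen on a disc centred to the right of $1$. The paper takes $a=\theta+\tfrac14(b-\theta)$ (your $\mu=\tfrac14$) and uses \eqref{zestitsmall} on $[4,T]$, giving the leading coefficient $\tfrac{4}{3\pi}\approx0.424$; your $\mu=\tfrac13$ with \eqref{zsintheleft} gives $\tfrac{3}{2\pi}\cdot\tfrac{1-a}{1-\theta}<\tfrac{3}{2\pi}\approx0.477$, which is still under $\tfrac12$, so both parameter choices work. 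Two remarks: (i) you rightly carry the pole term $-(1-a)$ in Littlewood's identity, which the paper elides (harmlessly, since it has the helpful sign); (ii) what you flag as ``the real work'' — an explicit Jensen count for $g$ and the final bookkeeping of the $\log(A+\kappa)$, $\log\tfrac1{b-\theta}$, and absolute-constant contributions — is precisely what the paper carries out in detail to land on $0.425T\log T+(1.92\log(A+\kappa)+0.96\log\tfrac1{b-\theta}+2.98)T$, so your proposal is correct in structure but leaves that verification, which is the substance of the explicit statement, unfinished.
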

\begin{proof} Instead of Jensen's theorem, we now apply
Littlewood's theorem (see \cite[p. 166]{Dav}) to the rectangle
$Q(a,T):=[a,2]\times [-iT,iT]$, with $\theta < a\leq (1+\theta)/2$
and also $a<b$, assuming momentarily that on the boundary
$\partial Q(a,T)$ there is no zero of $\ze$ (and then get it even
for other values of $a$ and $T$ by taking limits). This theorem
provides the average formula
\begin{align}\label{Littlewoodintegral}
2\pi \int_a^2 N(q,T) dq & = \int_{-T}^{T} \left[\log|\ze(a+it)| -
\log|\ze(2+it)| \right] dt
\notag \\ &\qquad + \int_{a}^{2} \left[\arg
\ze(\sigma+iT) - \arg \ze(\sigma-iT) \right] d\sigma,
\end{align}
where $\arg \ze$ is defined by continuous variation from $\ze(2+it)$.

As in \cite{Dav}, the estimation of the second integral is executed by an appeal to Backlund's observation: $\arg \zs = \Im \log \ze(s)= \frac{1}{2i} \left[ \log \zs
-\overline{\log \zs} \right]$, hence the second integral is
$$
\int_{a}^{2} \left[\arg \ze(\sigma+iT) - \arg \ze(\sigma-iT)
\right] d\sigma = -i \int_{a}^{2} \left[\log \ze(\sigma+iT) - \log
\ze(\sigma-iT) \right] d\sigma
$$
At $\sigma=2$, Lemma \ref{l:oneperzeta}, \eqref{reciprok} provides
that either $|\Re \ze(2\pm iT)| \geq 1/(2\sqrt{2}(A+\kappa))$, or
$|\Im \ze(2\pm iT)| \geq 1/(2\sqrt{2}(A+\kappa))$; e.g. in the
first case let
$$
g(s):=g_T(s):= \frac12\left[ \ze(s+iT)+\ze(s-iT)\right].
$$
The number of zeroes $\nu(\sigma)$ of $g$ along the segment
$[\sigma,2]$ is the number of cases when $\Re \ze(\al\pm iT)$
vanishes along this segment, hence by the definition using
continuous variation, we must have $|\arg \ze(\sigma\pm iT)| \leq
(\nu(\sigma)+1)\pi$. Therefore,
$$
\frac{1}{\pi}\int_a^2 \arg\ze(\sigma\pm iT) d\sigma \leq \int_a^2
\nu(\sigma)d\sigma + (2-a) \leq \int_a^2 n_g(2-\sigma) d\sigma +2,
$$
where now $n(2-\sigma):=n_g(2-\sigma):=\#\{s~:~|s-2|\leq
r:=2-\sigma,~g(s)=0\}$, which is clearly at least as large as
$\nu(\sigma)$. Now we use Jensen's formula to estimate the last
integral as
$$
\int_a^2 n_g(2-\sigma) d\sigma =\int_0^{2-a} n(r)dr < 2
\int_0^{2-a} \frac{n(r)}{r} dr \leq \frac{1}{\pi} \int_0^{2\pi}
\log|g(2+(2-a)e^{i\varphi})| d\varphi - 2 \log|g(2)|.
$$
By the above, $- \log|g(2)| = -\log |\Re\ze(2+ iT) | \leq
\log(2\sqrt{2}(A+\kappa))$, while for the integral we can apply $|
g(2+(2-a)e^{i\varphi})| \leq \max \left(
|\ze(2+iT+(2-a)e^{i\varphi})|,~|\ze(2-iT+(2-a)e^{i\varphi})|\right)$.
This last expression can be estimated using the uniform estimates
of Lemma \ref{l:zkiss}, \eqref{zsgenlarget} (note that $T\geq 5$
so the variables considered are all in the range $|t|\geq \si$). We
are led to
\begin{align*}
\int_a^2 n_g(2-\sigma) d\sigma &  \leq 2\left(
\log\frac{\sqrt{2}(A+\kappa)}{a-\theta} + \log(T+2) +
\log(2\sqrt{2}(A+\kappa))\right) \\
& \leq 2 \log T + 4 \log(A+\kappa) + 2 \log\frac{1}{a-\theta} +
3.4.
\end{align*}
In all for the argumentum integrals we obtain
\begin{align}\label{argint}
\int_{a}^{2} \big[\arg \ze(\sigma+iT) & - \arg \ze(\sigma-iT)
\big] d\sigma \leq 2\pi\left( \int_a^2 n_g(2-\sigma) d\sigma +2\right) \notag
\\ & \leq 4 \pi \log T + 8 \pi \log(A+\kappa) + 4 \pi \log\frac{1}{a-\theta} + 2\pi\cdot 5.4.
\end{align}
Now let us go to the evaluation of the other integral. Using once
again the estimate of Lemma \ref{l:oneperzeta}, \eqref{reciprok},
the term along the $\sigma=2$ line is of no problem, contributing
\begin{equation}\label{logzetaonthe2line}
\int_{-T}^{T}  - \log|\ze(2+it)|  dt \leq 2T  \log \left(2(A+\kappa) \right).
\end{equation}
For the most essential part, the integral along the $\sigma=a$
line, in the finite range between $[-4,4]$ an application of
\eqref{zsgeneral} of Lemma \ref{l:zkiss} yields, using also
$\theta <a\leq  (1+\theta)/2$,
\begin{align*}
\int_0^4 \log |\ze (a+it)| dt & \leq \int_0^4 \log \left(
\frac{A|a+it|}{a-\theta} + \frac{\kappa}{|(1-(a+it)|}\right) dt
\leq \int_0^4 \log \left( \frac{(A+\kappa)\sqrt{1+t^2}}{a-\theta}
\right) dt
\\ & = 4 \log \left(\frac{A+\kappa}{a-\theta} \right) + \frac 12
\int_0^4 \log (1+t^2) dt = 4 \log \left(\frac{A+\kappa}{a-\theta}
\right) +  2.992244...
\end{align*}
For the range with $4\leq|t|\leq T$ we can refer to the estimate \eqref{zestitsmall} of Lemma \ref{r:zestismallt}. 
From that and the above we obtain
\begin{align*}
\int_{-T}^{T} \log|\ze(a+it)| dt & < 8 \log
\left(\frac{A+\kappa}{a-\theta} \right) +  6 +  2\int_4^T \left(
\log \frac{(A+\kappa)}{a-\theta} + \log (t+1) \right) dt
\\ & \leq 2T \log
\left(\frac{A+\kappa}{a-\theta} \right) + 6+
2\left\{((T+1) \log (T+1) -(T+1))  -( 5\log 5-5) \right\} \\ & = 2T\log T + 2T \log\frac{T+1}{T} +2\log (T+1) + \left(\log \frac{A+\kappa}{a-\theta} -1\right) \cdot 2 T + 14 - 10 \log 5
\\ & < 2T\log T + \left(\log \frac{A+\kappa}{a-\theta} -1\right) \cdot 2 T + 2 \log(T+1) + 16-10\log 5.
\end{align*}
Collecting \eqref{Littlewoodintegral}, \eqref{argint}, \eqref{logzetaonthe2line} and the last estimates we obtain from Littlewood's theorem that
\begin{align*}
2\pi \int_a^2 N(q,T) dq & \leq 2T \log T +
\left(\log\frac{2(A+\kappa)^2}{a-\theta} -1 \right) 2T + 4\pi
\log T +2\log(T+1) \\ &\qquad\qquad\qquad + 8 \pi \log(A+\kappa) + 4\pi
\log\frac{1}{a-\theta} + 2\pi\cdot5.4+ 16-10\log 5.
\end{align*}

Here we choose $a:=(b+3\theta)/4=\theta+\frac14(b-\theta)$ and apply $(b-a) N(b,T) \leq \int_a^2 N(q,T) dq$ to obtain (taking into account $b-a=\frac34 (b-\theta)$, $a-\theta=\frac14 (b-\theta)$ and also $T\ge 5$)
\begin{align*}
N(b,T) & \leq \frac{1}{b-a} \bigg\{ \frac{1}{\pi} T \log T +
\left(\frac{2}{\pi} \log(A+\kappa) + \frac{3\log 2-1}{\pi} +
\frac1{\pi} \log \frac{1}{b-\theta} \right) T
\\ &  \qquad \qquad\qquad+ 2 \log T + \frac{1}{\pi} \log(T+1) + 4
\log(A+\kappa) + 2 \log\frac{4}{b-\theta} + 5.4 + \frac{8-10\log 5}{\pi}\bigg\} \\
& \leq \frac{1}{\frac{3}{4}(b-\theta)} \bigg\{\frac{1}{\pi} T \log T +
\left(\left(\frac{2}{\pi}+\frac{4}{T}\right) \log(A+\kappa) + \left( \frac{1}{\pi}+\frac{2}{T}\right) \log\frac{1}{b-\theta} + 0.35 \right) T
\\ &  \qquad \qquad\qquad  + \left(2 \frac{\log T}{T} + \frac{1}{\pi} \frac{\log(T+1)}{T} + \left(2 \log 4 + 5.4 + \frac{8-10\log 5}{\pi}\right)\frac{1}{T} \right)  T \bigg\}
\\ & \leq \frac{1}{\frac{3}{4}(b-\theta)} \bigg\{\frac{1}{\pi} T \log T + \bigg[
\left(\frac{2}{\pi} +\frac45\right) \log(A+\kappa) + \left(\frac{1}{\pi}+\frac{2}{5} \right)\log\frac{1}{b-\theta}
\\ & \qquad \qquad\qquad\qquad \qquad+0.35 + 2 \frac{\log 5}{5} + \frac{1}{\pi} \frac{\log 6}{5} + \left(2 \log 4 + 5.4 + \frac{8-10\log 5}{\pi}\right)\frac{1}{5} \bigg]  T \bigg\}
\\ & \leq \frac{4}{3(b-\theta)} \left\{\frac{1}{\pi} T \log T + \left[
1.44 \log(A+\kappa) + 0.72 \log\frac{1}{b-\theta} +2.23\right]T\right\}
\\ & \le \frac{1}{b-\theta} \left\{0.425 ~T \log T + \left[1.92 \log(A+\kappa) + 0.96 \log\frac{1}{b-\theta} + 2.98\right]T\right\}
\end{align*}
and the assertion follows.
\end{proof}

\begin{lemma}\label{c:zerosinrange}
Let $\theta<b<1$ and consider any heights $T>R\geq 5 $ together
with the rectangle $Q:=Q(b,R,T):=\{ z\in\CC~:~ \Re z\in [b,1],~\Im
z\in (R,T)\}$.

Then the number of zeta-zeros $N(b,R,T)$ in the rectangle $Q$ satisfies
\begin{equation}\label{zerosbetween}
N(b,R,T) \leq\frac{1}{b-\theta} \left\{ \frac{4}{3\pi} (T-R) \left(\log T + \log\left(\frac{11.4 (A+\kappa)^2}{b-\theta}\right)\right)  + \frac{16}{3}  \log\left(\frac{60 (A+\kappa)^2}{b-\theta}\right)\right\}.
\end{equation}

In particular, for the zeroes between $T-1$ and $T+1$ we have for
$T\geq 6$
\begin{align}\label{zerosbetweenone}
N(b,T-1,T+1) \leq \frac{1}{(b-\theta)} \left\{ 0.85 \log T +
6.2 \log\left( \frac{(A+\kappa)^2}{b-\theta}\right) + 24 \right\}.
\end{align}
\end{lemma}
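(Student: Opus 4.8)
The plan is to imitate the proof of Lemma~\ref{l:Littlewood}, but to apply Littlewood's lemma to the \emph{shifted} box $Q(a,R,T):=[a,2]\times[iR,iT]$ instead of a box symmetric about the real axis. Since $R\ge 5>0$, the pole of $\ze$ at $s=1$ lies safely below $Q(a,R,T)$, so $\ze$ is regular there; as usual I would first assume $\ze$ has no zero on $\partial Q(a,R,T)$ and recover the general case by a limiting argument. With $\theta<a<b$ and $a\le(1+\theta)/2$ to be fixed later, Littlewood's lemma (\cite[p. 166]{Dav}) gives
\begin{align*}
2\pi\int_a^2 N(q,R,T)\,dq &= \int_R^T\big[\log|\ze(a+it)|-\log|\ze(2+it)|\big]\,dt\\
&\quad + \int_a^2\big[\arg\ze(\si+iT)-\arg\ze(\si+iR)\big]\,d\si,
\end{align*}
where $N(q,R,T)$ counts the $\ze$-zeros with real part $\ge q$ and imaginary part in $(R,T)$. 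Since this is nonincreasing in $q$, one has $(b-a)N(b,R,T)\le\int_a^2 N(q,R,T)\,dq$, so it suffices to bound the right-hand side from above, term by term.

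For the vertical edges: along $\si=2$, formula~\eqref{reciprok} of Lemma~\ref{l:oneperzeta} gives $-\log|\ze(2+it)|\le\log\!\big(2(A+\kappa)\big)$, contributing at most $(T-R)\log\!\big(2(A+\kappa)\big)$. Along $\si=a$, since every $t\in[R,T]$ satisfies $t\ge 5$, I would apply \eqref{zestitsmall} of Lemma~\ref{r:zestismallt} pointwise with the choice $\tau=t$, obtaining $\log|\ze(a+it)|\le\log\frac{A+\kappa}{a-\theta}+\log(t+1)$; integrating and using $(R+1)\log\frac{T+1}{R+1}\le T-R$ yields a contribution of at most $(T-R)\log\frac{(T+1)(A+\kappa)}{a-\theta}$. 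This is the only place where the full length $T-R$ enters, which is precisely why the main term of \eqref{zerosbetween} carries the factor $T-R$ and not $T$.

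The horizontal edges are handled by Backlund's device exactly as in the proof of Lemma~\ref{l:Littlewood}: for the edge at height $T$ put $g_T(s):=\tfrac12\big[\ze(s+iT)+\ze(s-iT)\big]$, so that $|\arg\ze(\si+iT)|\le(\nu(\si)+1)\pi$ with $\nu(\si)$ the number of real zeros of $g_T$ on $[\si,2]$, and bound $\int_a^2\nu(\si)\,d\si$ by Jensen's formula applied to $g_T$ on the disc $|s-2|\le 2-a$. The base value $|g_T(2)|=|\Re\ze(2+iT)|$ (or the imaginary part) is bounded below by \eqref{reciprok}, while on the bounding circle $|g_T(2+(2-a)e^{i\ff})|\le\max\big(|\ze(2\pm iT+(2-a)e^{i\ff})|\big)$, whose arguments have $|t|\in[T-2,T+2]$ and $\si\in[a,4-a]$; as $R\ge 5$ keeps $|t|\ge 3$, one may use \eqref{zestitsmall} with $\tau=T+2$ (or \eqref{zsgenlarget} when $T$ is large) to get $\log|g_T|\le\log\!\big(4(T+3)(A+\kappa)\big)+\log\frac{1}{\si-\theta}$ there. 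The same estimate with $R$ replacing $T$ treats the edge at height $R$. Each horizontal edge thus contributes $O\!\big(\log T+\log(A+\kappa)+\log\frac{1}{a-\theta}\big)$, a bound \emph{independent of $R$}, which is what generates the $R$-free additive $\tfrac{16}{3}\log\!\big(60(A+\kappa)^2/(b-\theta)\big)$ term of \eqref{zerosbetween}.

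Finally I would set $a:=\theta+\tfrac14(b-\theta)$, so $b-a=\tfrac34(b-\theta)$ and $a-\theta=\tfrac14(b-\theta)$, collect the four contributions, divide by $2\pi(b-a)$, and estimate the numerical constants to reach \eqref{zerosbetween}. The special case \eqref{zerosbetweenone} then follows by taking lower height $T-1$ and upper height $T+1$ in \eqref{zerosbetween} (legitimate once $T\ge 6$, so $T-1\ge 5$), using $T-R=2$, $\tfrac{8}{3\pi}<0.85$, and $\log(T+1)\le\log T+\tfrac1T\le\log T+\tfrac16$, then absorbing the leftover absolute constants. The main obstacle is not conceptual but the bookkeeping of all the numerical constants, together with the need to check that the circle estimates behind the argument integrals remain valid uniformly down to $R=5$; the one genuinely thought-requiring observation is that the length $T-R$ enters only through the left-edge integral, whereas both horizontal-edge contributions are merely $O(\log T)$ and supply the $R$-independent additive term.
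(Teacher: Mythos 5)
Your proof is correct, but it takes a genuinely different route from the paper's. You apply Littlewood's lemma \emph{once}, to the shifted, non-symmetric box $[a,2]\times[iR,iT]$, whereas the paper applies it \emph{twice}, to the two $\RR$-symmetric boxes $[a,2]\times[-iT,iT]$ and $[a,2]\times[-iR,iR]$, and then subtracts the resulting identities. The subtraction approach lets the paper reuse the argument-integral estimate \eqref{argint} already derived (in precisely the symmetric form $\int_a^2[\arg\ze(\si+iT)-\arg\ze(\si-iT)]d\si$) during the proof of Lemma~\ref{l:Littlewood}, at heights $T$ and $R$; the only new work is that the vertical-edge $\log|\ze|$-integrals cancel on $[-R,R]$. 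Your route is more direct and in fact slightly sharper: the paper forgoes the observation that, by conjugate symmetry of the zeros, $N(b,T)-N(b,R)=2N(b,R,T)$, so its bound is off by an avoidable factor of $2$, and its argument term effectively carries four Backlund applications (at $\pm T,\pm R$) against your two (at $T$ and $R$). The price you pay is that you must invoke Littlewood's lemma for a rectangle not symmetric about $\RR$ (the cited form in Davenport is stated for the symmetric case), and you must run Backlund's device with a base branch of $\arg\ze$ fixed at the corner $2+iR$ rather than at the real point $2$; this forces you to absorb a bounded correction of $O(1)$ per corner from the principal-branch choice, and to rewrite the Jensen-disc estimate for the auxiliary function $g_R(s)=\tfrac12[\ze(s+iR)+\ze(s-iR)]$ (the pole check $1\pm iR\notin D(2,2-a)$ does hold for $R\ge 5$, as you need). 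None of these is a gap -- they are exactly the bookkeeping you flag -- but they are the modifications the paper's subtraction device was designed to avoid.

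One small citation slip: on the left edge $\si=a$ the paper in fact uses the $\sqrt2(A+\kappa)|t|/(a-\theta)$ bound (which is \eqref{zsgenlarget}, not \eqref{zestitsmall}); your choice of \eqref{zestitsmall} with $\tau=t$ gives $(t+1)(A+\kappa)/(a-\theta)$, which is just as good. Also note that your key algebraic observation for the main term, $(R+1)\log\frac{T+1}{R+1}\le T-R$, is correct (it is $\log(1+u)\le u$ with $u=\frac{T-R}{R+1}$), and is indeed what replaces the cruder $\int_R^T\log(t+1)\,dt\le(T-R)\log(T+1)$ used implicitly by the paper after its cancellation on $[-R,R]$.
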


\begin{proof} We can apply the Littlewood theorem not only to $Q(b,T)$,
but also to the rectangle $Q(b,R):=[b,2]\times [-iR,iR]$:
moreover, we can simply subtract the contributions of the two
formulae to estimate the contribution of the zeroes in between.
Thus we obtain (again with $a:=(b+3\theta)/4$) the estimations
\begin{align*}
2\pi (b-a)[N(b,T)-N(b,R)] &\leq 2\pi \int_a^2
[N(q,T)-N(q,R)]dq \\& = \int_{[-T,-R]\cup[R,T]}
\left[\log|\ze(a+it)| - \log|\ze(2+it)| \right] dt
\\ &\qquad \qquad  + \int_{a}^{2} \left[\arg \ze(\sigma+iT) - \arg
\ze(\sigma-iT) \right] d\sigma
\\ &\qquad\qquad  - \int_{a}^{2} \left[\arg
\ze(\sigma+iR) - \arg \ze(\sigma-iR) \right] d\sigma.
\end{align*}
As before in \eqref{argint}, for the difference of the two argumentum integrals we can infer -- also writing in the trivial equality $\log\frac{1}{a-\theta} = \log \frac{1}{b-\theta} + \log 4$-- the estimates
\begin{align*}
\int_{a}^{2} (\dots T\dots )d\sigma & - \int_{a}^{2} (\dots R \dots
) d\sigma
\\& \leq 8 \pi \log T + 16\pi \log(A+\kappa) + 8\pi
\log\frac{1}{b-\theta} +8\pi \log 4  + 2\pi 10.8.
\end{align*}
On the other hand for the contribution of the main term integrals
of $\log|\ze|$ we have a better estimation, due to the fact that
here the part over $[-R,R]$ of the integral is canceled. For the
part over $\sigma=2$ by the same uniform estimation using Lemma \ref{l:oneperzeta}, \eqref{reciprok}, we obtain
$$
\int_{[-T,-R]\cup[R,T]} - \log|\ze(2+it)|  dt \leq 2(T-R) \log
\left( 2(A+\kappa) \right).
$$
Lastly, the contribution over the $\sigma=a$ line can be estimated using \eqref{zestitsmall} of Lemma \ref{r:zestismallt} as
\begin{align*}
\int_{[-T,-R]\cup[R,T]} \log|\ze(a+it)| dt &\leq \int_{[-T,-R]\cup[R,T]} \left( \log
\frac{\sqrt{2}(A+\kappa)}{a-\theta} + \log |t| \right) dt.
\\ & \leq 2  (T-R) \left(\log T+ \log \frac{4\sqrt{2}(A+\kappa)}{b-\theta}\right)
\end{align*}
So collecting our estimates and writing in $b-a=\frac34 (b-\theta)$ we are led to
\begin{align*}
2\pi \frac34 (b-\theta) N(b,R,T) & \leq 2\pi \int_a^2 [N(q,T)-N(q,R)]dq
\\ & \leq 2 (T-R) \left( \log T + \log
\left(\frac{8\sqrt{2}(A+\kappa)^2}{b-\theta}\right) \right)
\\ & + 8 \pi \log T  + 16 \pi \log(A+\kappa) +
8\pi  \log\frac{1}{b-\theta} +2\pi 10.8+8\pi\log 4
\\&\leq 2 (T-R)\left(\log T + \log\left(\frac{11.4 (A+\kappa)^2}{b-\theta}\right)\right)  + 8 \pi  \left(\log\left(\frac{59.52 (A+\kappa)^2}{b-\theta}\right)\right).
\end{align*}
The inequality \eqref{zerosbetween} follows.

This formula applies also for $N(b,T+1,T-1)$ on noting that
$\log(T+1)\leq \log T + \log\frac76$ so that
\begin{align*}
N(b,T-1,T+1) & \leq \frac{1}{b-\theta} \left\{ \frac{8}{3\pi}\left(\log T + \log\left(\frac{\frac76 11.4 (A+\kappa)^2}{b-\theta}\right)\right)  + \frac{16}{3}  \left(\log\left(\frac{59.52 (A+\kappa)^2}{b-\theta}\right)\right) \right\}
\\& < \frac{1}{b-\theta} \left\{ 0.8489 \log T + 6.2 \log\left(\frac{(A+\kappa)^2}{b-\theta}\right)  + \left(0.8489 \log 13.3 + \frac{16}{3}  \log 59.52 \right) \right\}.
\end{align*}
From here a little numerical calculus ends the proof of the Lemma.
\end{proof}

\section{The logarithmic derivative of the Beurling $\zeta$}
\label{sec:logder}

\begin{lemma}\label{l:borcar} Let $z=a+it_0$ with $|t_0| \geq e^{5/4}+\sqrt{3}=5.222\ldots$ and $\theta<a\leq 1$. With $\delta:=(a-\theta)/3$ denote by $S$ the
(multi)set of the $\ze$-zeroes (listed according to multiplicity)
not farther from $z$ than $\delta$. Then we have
\begin{align}\label{zlogprime}
\left|\frac{\ze'}{\ze}(z)-\sum_{\rho\in S} \frac{1}{z-\rho}
\right| & < \frac{9(1-\theta)}{(a-\theta)^2}
\left(22.5+14\log(A+\kappa)+14\log \frac{1}{a-\theta} + 5\log |t_0|\right).
\end{align}

Furthermore, for $0 \le |t_0| \le 5.23$ an analogous estimate (without any term containing $\log |t_0|$) holds true:
\begin{equation}\label{zlogprime-tsmall}
\left|\frac{\ze'}{\ze}(z)+\frac{1}{z-1}-\sum_{\rho\in S} \frac{1}{z-\rho}
\right|  \le \frac{9(1-\theta)}{(a-\theta)^2}
\left(34+14\log(A+\kappa)+18\log \frac{1}{a-\theta}\right)
\end{equation}

\end{lemma}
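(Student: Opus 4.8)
The plan is to prove both \eqref{zlogprime} and \eqref{zlogprime-tsmall} in one sweep, by differentiating a Poisson--Jensen representation on a disk straddling the one--line (the ``Borel--Carath\'eodory formula for $f'/f$''). For $|t_0|\ge e^{5/4}+\sqrt3$ I work with $f:=\ze$ itself; this will give \eqref{zlogprime}. For $0\le|t_0|\le 5.23$, where the pole of $\ze$ at $1$ may lie inside the disk, I instead take $f(s):=(s-1)\ze(s)$: by \eqref{polenozero} (and $f(1)=\kappa\neq0$) this $f$ is holomorphic and zero--free near $1$, is holomorphic on all of $\{\Re s>\theta\}$, has exactly the zeros of $\ze$, and satisfies $f'/f=\ze'/\ze+1/(s-1)$, so that estimating $|f'/f(z)-\sum_{\rho\in S}1/(z-\rho)|$ is estimating the left side of \eqref{zlogprime-tsmall}. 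In both cases I center the disk at $z_0:=(2-\theta)+it_0$ (the abscissa $p=1+(1-\theta)$ of Lemma~\ref{l:Jensen}) and choose its radius $R$ slightly above $|z_0-z|+\delta$ but below $2(1-\theta)$; this leaves room since $2(1-\theta)-(|z_0-z|+\delta)=(a-\theta)-\delta=2\delta>0$. Then $\overline{D(z_0,R)}\subset\{\Re s>\theta\}$ (its leftmost abscissa $2-\theta-R$ lies in $(\theta,\,a-\delta)$, where $\sigma-\theta\asymp\delta$), the disk contains $D(z,\delta)$ and hence all of $S$, and for $|t_0|\ge e^{5/4}+\sqrt3$ every point of $\partial D(z_0,R)$ in the critical strip has $|\Im s|>|t_0|-\sqrt3(1-\theta)\ge e^{5/4}$, making the estimates of Lemma~\ref{l:zetaincriticalstrip} usable there; for small $|t_0|$ one uses Lemma~\ref{r:zestismallt} instead.

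Next I would write down the differentiated Poisson--Jensen identity, valid for $z\in D(z_0,R)$,
\[
\frac{f'}{f}(z)=\sum_{\rho}\Bigl(\frac{1}{z-\rho}+\frac{\overline{\rho-z_0}}{R^{2}-\overline{(\rho-z_0)}(z-z_0)}\Bigr)+\frac{1}{2\pi}\int_{0}^{2\pi}\frac{2Re^{i\varphi}}{\bigl(Re^{i\varphi}-(z-z_0)\bigr)^{2}}\,\log\bigl|f(z_0+Re^{i\varphi})\bigr|\,d\varphi,
\]
the sum over zeros $\rho$ of $f$ in $D(z_0,R)$ counted with multiplicity. Subtracting $\sum_{\rho\in S}1/(z-\rho)$, one must bound three contributions: the terms with $\rho\notin S$, each $<1/\delta$ because then $|z-\rho|>\delta$; the Blaschke correction terms, each $\le(R-|z-z_0|)^{-1}\le C/\delta$; and the boundary integral, whose kernel is $\le 2R(R-|z-z_0|)^{-2}\le C(1-\theta)\delta^{-2}$. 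Writing $n$ for the number of zeros of $f$ in $D(z_0,R)$, the first two are $\ll n/\delta$ and the third is $\ll(1-\theta)\delta^{-2}\cdot\frac1{2\pi}\int_0^{2\pi}\bigl|\log|f(z_0+Re^{i\varphi})|\bigr|\,d\varphi$.

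There remain two quantitative inputs. Put $M:=\max_{\partial D(z_0,R)}|f|\,/\,|f(z_0)|$. Since $\frac1{2\pi}\int(\log|f|)^-\le\log M$ by Jensen, the boundary integral is $\le2\log M+O(1)$. The lower bound for $|f(z_0)|$ is Lemma~\ref{l:oneperzeta}\,\eqref{reciprok}, available because $\Re z_0=2-\theta>1$: $|\ze(z_0)|>(1-\theta)/((A+\kappa)(2-\theta))$, with an extra factor $|z_0-1|\ge 1-\theta$ in the shifted case. For $\max_{\partial D(z_0,R)}|f|$ I split $\partial D(z_0,R)$ into arcs according to the position of $s=\si+it$ and apply the appropriate explicit bound of Section~\ref{sec:basics}: in $\{\Re s>1\}$ away from the one--line, \eqref{zsintheright}; near the one--line (where $|\Im s|$ may dip below $4$), \eqref{zsgeneral}; in the critical strip, \eqref{zslogos}/\eqref{zslogoskist} --- resp., for small $|t_0|$, the $\log$-free bounds \eqref{zestitnearzero}--\eqref{zestitnotlarge} of Lemma~\ref{r:zestismallt} throughout, the factor $s-1$ keeping $|f|$ bounded near $1$. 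The maximum is attained near the leftmost point $z_0-R$, where $\sigma-\theta\asymp\delta$, yielding $\log M\le C_1\log(A+\kappa)+C_2\log\frac1{a-\theta}+C_3\log|t_0|+C_4$ (and no $\log|t_0|$ term in the small case), after using $\log\log|t_0|\ll\log|t_0|$ and $\log\frac1{1-\theta}\le\log\frac1{a-\theta}$. Finally $n$ is bounded by a second Jensen inequality with an outer radius $R'$ in the (length $\asymp\delta$) interval $(R,2(1-\theta))$: $n\log(R'/R)\le O(\log M)$ with $\log(R'/R)\asymp\delta/(1-\theta)$, so $n\ll(1-\theta)\delta^{-1}\log M$. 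Altogether the error is $\ll(1-\theta)\delta^{-2}\log M$, and since $\delta=(a-\theta)/3$ this is $9(1-\theta)(a-\theta)^{-2}\cdot O(\log M)$; tracking the explicit constants then delivers \eqref{zlogprime} and \eqref{zlogprime-tsmall}.

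The main obstacle is the estimation just described, and within it two coupled points. First, the arc of $\partial D(z_0,R)$ that dips toward $\Re s=\theta$: the available bounds for $|\ze|$ degrade like $1/(\sigma-\theta)$ there, and it is exactly this which produces --- and pins down the size of --- the $\log\frac1{a-\theta}$ term; one must check that the leftmost point costs only $O(\log\frac1\delta)$ and no worse. Second, the zero count is intrinsically weak: the only usable annulus has length $\asymp\delta$ while the disk radius is $\asymp1-\theta$, so Jensen loses a factor $\asymp(1-\theta)/\delta$, and this is why the final constant is $(1-\theta)/\delta^2=9(1-\theta)/(a-\theta)^2$ rather than the $1/(1-\theta)$ that a point well inside a disk of holomorphy of $\ze$ would give. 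Carrying the honest numerical constants $22.5,14,14,5$ (resp.\ $34,14,18$) through these steps --- and, in the small--$|t_0|$ case, absorbing the pole into the factor $s-1$ and replacing Lemma~\ref{l:zetaincriticalstrip} by the $\log$-free Lemma~\ref{r:zestismallt} (hence the larger constants) --- is the bulk of the remaining work.
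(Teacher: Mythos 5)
Your proposal is correct and produces the same kind of bound, but by a genuinely different central device than the paper. The paper factors out the nearby zeros with a Blaschke product $P$ and works with $g=\log(\zeta/P)$ on a three-radius nesting $D_1\supset D_2\supset D_3$ (radii $R_j=p-(\theta+j\delta)$ around $z_0=p+it_0$): Borel--Carath\'eodory bounds $|g|$ on $D_2$ in terms of $\max_{\partial D_1}\Re g$ and $-\Re g(z_0)$, a Cauchy estimate then bounds $|g'|$ on $D_3$, and $g'=\zeta'/\zeta-\sum_{\rho\in S'}(z-\rho)^{-1}-(\text{Blaschke corrections})$ finishes, with $\#(S'\setminus S)$ controlled separately. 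You instead write the differentiated Poisson--Jensen representation of $\zeta'/\zeta$ (resp.\ of $((s-1)\zeta)'/((s-1)\zeta)$ for small $|t_0|$) as an exact identity and bound each of its three pieces --- zero sum, Blaschke correction sum, boundary integral --- directly. These are two standard, essentially interchangeable routes; in fact Borel--Carath\'eodory\,+\,Cauchy is the inequality shadow of the exact Poisson--Jensen formula. Both consume exactly the same quantitative inputs: upper bounds for $\log|\zeta|$ on the outer circle from Lemmas~\ref{l:zkiss}, \ref{r:zestismallt}, \ref{l:zetaincriticalstrip}; the lower bound \eqref{reciprok} for $|\zeta(z_0)|$; and a Jensen zero count (the paper cites Remark~\ref{r:alsoindisk}, you re-derive it with a second radius $R'$). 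The overall loss $(1-\theta)/\delta^2=9(1-\theta)/(a-\theta)^2$ appears for the same geometric reason in both --- a disk of radius $\asymp(1-\theta)$ but stand-off only $\asymp\delta$ from $\partial D$. Two small bookkeeping points to watch if you carry out the details: the negative-part estimate $\frac1{2\pi}\int(\log|f|)^-\,d\varphi\le\log M$ requires you to add back the positive part $\le\log\max_{\partial D}|f|$ before claiming the bound for $\int|\log|f||$, so the constant is closer to $2\log\max|f|-\log|f(z_0)|$ than to ``$2\log M+O(1)$''; and the Jensen count for $n$ should use $\max_{\partial D(z_0,R')}|f|$ on the outer circle $R'$, not the $M$ defined with radius $R$ (the two are of the same order since both circles reach $\sigma-\theta\asymp\delta$, so this is harmless but should be stated). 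The paper's choice of separating $\max\Re g$ on $\partial D_1$ from $-\Re g(z_0)$ sidesteps exactly this sign-bookkeeping, which is arguably why it opts for Borel--Carath\'eodory over Poisson--Jensen here.
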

\begin{proof} Let now $z_0:=p+it_0$, $p:=1+(1-\theta)$, and denote
by $D_j$ the disk around it with the radii
$R_j:=p-(\theta+j\delta)$ for $j=1,2,3$, where we have chosen
$\delta:=(a-\theta)/3$. Observe that then $z\in\partial D_3\subset D_3$.

By Lemma \ref{l:Jensen} and Remark \ref{r:alsoindisk} (applied with $q:=\theta+\delta=\theta+\frac23 (b-\theta)$ i.e. with $b:=\theta+\frac32\delta$) we
know that the number $N$ of $\ze$-zeroes in the disk $D_1$ satisfies whenever $t_0 \ge 5.222\ldots$ the estimate
\begin{align}\label{zerosinDone}
N & \leq \frac{1-\theta}{\frac{3}{2}\de} \left(12.5+6\log(A+\kappa)+6\log \frac{1}{1-\theta} + \log t_0 \right)
\\ \notag & =\frac{1-\theta}{\de} \left(\frac{25}{3}+4\log(A+\kappa)+4\log \frac{1}{1-\theta} + \frac23 \log t_0\right).
\end{align}
Now denote the (multi)set of all these zeroes (again taking them
according to multiplicities) as $S'$ and define
\begin{equation}\label{eq:Blaschkefactor}
P(s):=\prod_{\rho\in S'}\al_{\rho}(s),\qquad \textrm{where} \qquad
\al(s):=\al_{\rho}(s):= \frac{(s-z_0)R_1-(\rho-z_0)R_1}
{R_1^2-(s-z_0)\overline{(\rho-z_0)}}.
\end{equation}
Let us consider now
$$
f(s):=\frac{\zeta(s)}{P(s)},
$$
which is a regular function in $D_1$, moreover, by construction,
it is non-vanishing in $D_1$. Hence also $g(s):=\log f(s)$ is
regular at least in $D_1$; by appropriate choice of the logarithm
we may assume that $\Im g(z_0)= \textrm{arg}~ f(z_0) \in
[-\pi,\pi]$.

First we estimate the order of magnitude of $f$ on the perimeter
of $D_1$. If $s=\si+it\in\partial D_1$, then any factor of $P(s)$ is
exactly 1 in modulus, hence $|f(s)|= |\zs|$.

Note that $\sigma-\theta$ is at least $\delta$, $\sigma<p+R_1<2p-\theta < 4$, $R_1<p-\theta<2$ and $t>t_0-R_1>3.22$. Moreover, a little coordinate geometry reveals that the distance of the point $z_0=p+it_0$ from the line $\si=t$ is $\frac{1}{\sqrt2} (\Im z_0 -\Re z_0)\ge \frac{1}{\sqrt2} (5.2-p)>\frac{3}{\sqrt2}>2>R_1$, whence the whole disk $D_1$ lies in the domain $\si<t$, and the estimate \eqref{zsgenlarget} of Lemma \ref{l:zkiss} applies furnishing

$$
|f(s)|= |\zs| \leq \sqrt{2} \frac{(A+\kappa) t}{\si-\theta} \qquad
(s\in\partial D_1).
$$
As $g(s):=\log f(s)$ is analytic on $D_1$, we can apply the
Borel-Caratheodory lemma:
$$
\max_{|s-z_0|\leq R_2} |g(s)-g(z_0)| \leq
\frac{2R_2}{R_1-R_2}\max_{|s-z_0|\leq R_1} \Re \left( g(s) -
g(z_0) \right).
$$
That is, with a slight reformulation we obtain for $s\in D_2$
$$
|g(s)| \leq |g(s)-g(z_0)| + |\Im g(z_0)| + |\Re g(z_0)| \leq \pi +
\frac{2R_2}{\delta}\max_{|s-z_0|\leq R_1} \Re g(s) +\left(
\frac{2R_2}{\delta} \pm 1 \right) \Re (-g(z_0)),
$$
where here by the choice of our radii and parameters, it is clear
that the sign of the coefficient in front of $\Re (-g(z_0))$ is
positive. Firstly,
$$
\Re g(s) = \log|f(s)| \leq \log\left( \sqrt{2}
\frac{(A+\kappa)|t|}{\si-\theta}\right) \le \log\left( \sqrt{2}
\frac{(A+\kappa)(t_0+R_1)}{\delta}\right) \qquad (s\in\partial D_1).
$$
that is, taking into account $R_1/t_0<2/5.2$, we get for the maximum of $\Re g(s)$ on $\partial D_1$
$$
\max_{|s-z_0|\leq R_1} \Re g(s)  \leq \log\left(2\frac{(A+\kappa)t_0}{\delta}\right).
$$
Application of the Borel-Caratheodory lemma on $D_2$ thus yields
$$
\max_{|s-z_0|\leq R_2} |g(s)| \le \pi + \frac{4(1-\theta)-4\de}{\de}
\left(\log(2(A+\kappa)) +\log t_0+ \log\frac1{\delta} \right) + \left(\frac{4(1-\theta)-4\de}{\de} \pm 1\right) \Re(- g(z_0))
.
$$
Since the factors $\al(s)$ of $P(s)$ are all less than $1$ at
$z_0$, by \eqref{reciprok} we obtain
$$
- \Re g(z_0) = \log\left| \frac{1}{\ze(z_0)}\right| + \log
\left|P(z_0)\right| \leq \log\frac{p(A+\kappa)}{p-1} \le \log\frac{2(A+\kappa)}{1-\theta}.
$$
Hence taking into account $\frac1{\de}=\frac3{a-\theta}$ and $\frac1{1-\theta}\le\frac1{a-\theta}$, we are led to
\begin{align*}
\max_{|s-z_0| \leq R_2} |g(s)|
& \le \pi+  \left(\frac{4(1-\theta)}{\de} -4\right)  \left(\log(2(A+\kappa)) +\log t_0+ \log\frac1{\delta} \right) + \left(\frac{4(1-\theta)}{\de} -3\right) \log\frac{2(A+\kappa)}{1-\theta}
\\ &
\le \frac{4(1-\theta)}{\de}
\left(\log\left(12(A+\kappa)^2\right) +\log t_0+ 2 \log\frac1{a-\theta} \right) +\pi -4\log6-3\log2
\end{align*}
Note that the sum of the constants $\pi -4\log6-3\log2$ is negative, whence can be dropped.
Finally we apply the usual Cauchy estimate to the regular function
$g=\log f$ in $D_2$ to obtain a generally valid estimate of the
derivative in $D_3$. Thus the radius of the estimation is
$\delta$, which will divide on the right hand side to give
\begin{align}\label{MgD3}
\max_{|s-z_0|\leq R_3} |g'(s)| &
\le \frac{4(1-\theta)}{\de^2}
\left(\log12 +2\log (A+\kappa)+\log t_0+ 2 \log\frac1{a-\theta} \right).
\end{align}

In view of the definition of $f(s)$, $P(s)$ and $S'$ we have
$f'/f=\ze'/\ze -P'/P$, hence
$$
g'(s)= \frac{\ze'}{\ze}(s) - \sum_{\rho\in S'} \frac{1}{s-\rho} -
\sum_{\rho\in S'} \frac{\overline{(\rho-z_0)}}
{R_1^2-(s-z_0)\overline{(\rho-z_0)}},
$$
Note that the last sum has $N$ terms, and for any $s\in D_3$ the difference in the denominator is at least $2\delta R_1$ in modulus, while the numerator is at most $R_1$. Therefore, we have
$$
\left| \sum_{\rho\in S'} \frac{\overline{(\rho-z_0)}}
{R_1^2-(s-z_0)\overline{(\rho-z_0)}}\right| \le \frac{N}{2\de}.
$$
Finally, we need to take care of the difference between $S$ and $S'$.

Here we specify to the point $z=a+it_0\in D_3$.
Clearly, $S\subset S'$, and for any $\rho\in S'\setminus S$
$|z-\rho|>\delta$ by construction (definition of $S$). Therefore, $\left|\sum_{\rho\in S'\setminus S} \frac1{z-\rho}\right|$ does not exceed $N/\delta$, so we get
$$
\left|g'(z)-\left(\frac{\ze'}{\ze}(s)-\sum_{\rho\in S} \frac{1}{z-\rho}
\right)\right| \le \frac{3N}{2\de} \le \frac{1-\theta}{\de^2} \left(12.5+6\log(A+\kappa)+6\log \frac{1}{1-\theta} + \log t_0\right).
$$
Combining with \eqref{MgD3} we obtain the assertion.

\bigskip
If $z=a+it_0$ with $|t_0| \le e^{5/4}+\sqrt{3}$, then an analogous proof should be followed with $\zeta(s)(s-1)$ replacing $\zeta(s)$ in the argument.

Again, let $N$ denote the number of $\zeta$-zeroes in the disk $D_1$. Analogously to  \eqref{zerosinDone} now the estimate \eqref{zerosinH-smallt} of Lemma \ref{l:Jensen} provides (taking into account Remark \ref{r:alsoindisk}, too) $N \leq \frac{1-\theta}{b-\theta}\left(14 +6\log(A+\kappa) + 6\log\frac1{1-\theta}\right)$, where also here the value of $b$ is chosen as $\theta+\frac32\de$, so that we get\footnote{Note that Lemma \ref{l:Littlewood} \eqref{zerosinth-corr} can only provide with $T= 5$ and with $b:=\theta+\de$ the less sharp estimate
\begin{align*}
N(b,T) & \le \frac{1}{b-\theta}
\left\{\frac{1}{2} T \log T + \left(2 \log(A+\kappa) + \log\frac{1}{b-\theta} + 3 \right)T\right\}
\\ &\le \frac{1}{\de}
\left\{\frac{5}{2} \log 5 + \left(2 \log(A+\kappa) + \log\frac{3}{a-\theta} + 3 \right)5\right\}\le \frac{1}{\de}
\left\{25 + 10 \log(A+\kappa) + 5 \log\frac{1}{a-\theta}\right\}
\end{align*}
}
\begin{equation}\label{eq:nwhentsmall}
N\le \frac{1}{\de} \left(\frac{28}3 +4\log(A+\kappa) + 4\log\frac1{1-\theta}\right).
\end{equation}
Let us define now similarly as above the function $F(s):=\zs (s-1)/P(s)$, where $P(s)$ is the Blaschke-type product of zero factors as in \eqref{eq:Blaschkefactor}.

The function $\zs (s-1)$ has an estimate, uniformly valid all over $\si>\theta$. Namely, according to \eqref{zssmin1} we have $|\zs (s-1)| \le \frac{|s||s-1|A}{\si-\theta} +\kappa\le \frac{(\max(|s|,|s-1|))^2 (A+\kappa)}{\si-\theta}$. As the Blaschke factors have absolute value 1 on the circle $\partial D_1$, we therefore obtain
$$
|F(s)| \le \frac{(\max(|s|,|s-1|))^2(A+\kappa)}{\si-\theta} \qquad
(s\in\partial D_1).
$$
As $G(s):=\log F(s)$ is analytic on $D_1$, we can apply the
Borel-Caratheodory lemma:
$$
\max_{|s-z_0|\leq R_2} |G(s)-G(z_0)| \leq
\frac{2R_2}{R_1-R_2}\max_{|s-z_0|\leq R_1} \Re \left( G(s) -
G(z_0) \right).
$$
That is, with the same slight reformulation as above we obtain for $s\in D_2$
$$
|G(s)| \leq |G(s)-G(z_0)| + |\Im G(z_0)| + |\Re G(z_0)| \leq \pi +
\frac{2R_2}{\delta}\max_{|s-z_0|\leq R_1} \Re G(s) +\left(
\frac{2R_2}{\delta} \pm 1 \right) \Re (-G(z_0)),
$$
We are to estimate the maximum of $\Re G(s)=\log |F(s)|$ -- or, equivalently, of $|F(s)|$ -- on $\partial D_1$. For $\si\ge 1$ we already have $\si-\theta\ge 1-\theta\ge a-\theta$, and we can easily get $\Re G(s) \le \log \frac{1}{a-\theta} + \log(|s|^2(A+\kappa)) \le \log \frac{1}{a-\theta}+\log ((7.23^2+4^2)(A+\kappa)) < \log \frac{1}{a-\theta}+\log (69(A+\kappa))$.

Otherwise write $\si+it=z_0+R_1e^{i\ff}$ where $\ff:=\arg (s-z_0) \in [-\pi,-\alpha] \cup [\al,\pi]$. Then $\si-\theta=p-\theta+R_1\cos \ff=\de+R_1(1+\cos\ff)$ and $t=t_0+R_1\sin\ff$, so that in particular it is clear that we can restrict to the case of positive $\sin \ff$, i.e. when $\al\le \ff \le \pi$. Introducing the new variables $\psi:=\frac{\pi-\ff}{2} \in [0,\pi/2]$ and $x:=2R_1\sin\psi \in [0,4]$, we can write
\begin{align*}
\frac{|F(s)|}{A+\kappa} & \le \frac{\si^2+t^2}{\de+R_1(1+\cos\ff)} \le \frac{1+(t_0+R_1 2\sin\psi\cos\psi)^2}{\de+2R_1\sin^2\psi}
\\ & < 2R_1 \frac{28.4+4R_1 \sin\psi + 4R_1^2 \sin^2\psi}{2R_1\de+4R_1^2\sin^2\psi}=2R_1 \frac{28.4+2x + x^2}{2R_1\de+x^2}
\end{align*}
If $q(x):=\frac{A+x+x^2}{B+x^2}$ with $A:=28.4$ and $B=2R_1\de \le 4\frac{a-\theta}{3} <4/3$,  then for $x\in [0,4]$ the maximum of $q(x)$ has to occur somewhere in $[0,\frac{B}{A-B}]\subset [0,B/27]$, for $x>\frac{B}{A-B}$ the derivative $q'(x)$ is easily seen\footnote{Its sign is equivalent to that of $B+(B-A)x-x^2$.} to be negative. However, for $0\le x \le B/27$ we have $q(x) \le \frac1{B} (A+B/27+(B/27)^2) < \frac1{B} (28.4+0.4+0.1)<29/B$, whence we find for all $s$ with $\si<1$ the estimate $\frac{|F(s)|}{A+\kappa} \le 29/\de$ and $\log|F(s)| \le \log 67 + \log(A+\kappa) + \log\frac{1}{a-\theta}$.

Combining with the above we thus obtain for all cases the estimate
$$
\Re G(s) \le \log (69(A+\kappa)) + \frac{1}{a-\theta}  \qquad ( s \in \partial D_1).
$$

The value of $-\Re G(z_0)=\log\left|\frac{1}{\ze(z_0)(z_0-1)}\right|+\log|P(z_0)|$ can be estimated similarly as above with reference to \eqref{reciprok}, $|z_0-1|\ge \Re (z_0-1)=1-\theta$ and $P(z_0)\le 1$ furnishing
\begin{align*}
-\Re G(z_0) & \le \log\frac{1}{\ze(z_0)} +\log\frac1{1-\theta}
\le \log\left(\frac{2(A+\kappa)}{(1-\theta)^2}\right)\le \log\left(\frac{2(A+\kappa)}{(a-\theta)^2}\right).
\end{align*}
Collecting these terms we get similarly to \eqref{MgD3}
\begin{align}\label{MgD3-smallt}
\max_{|s-z_0|\leq R_3} |G'(s)| & \le \frac{1}{\de} \left\{\pi+  \left(\frac{4(1-\theta)}{\de} -4\right)  \left(\log\left(\frac{69(A+\kappa)}{a-\theta}\right)\right) + \left(\frac{4(1-\theta)}{\de} -3\right) \log\frac{2(A+\kappa)}{(a-\theta)^2} \right\}
\notag
\\ & \le \frac{4(1-\theta)}{\de^2}
\left(\log 138 +2\log (A+\kappa)+ 3 \log\frac1{a-\theta} \right) + \frac1{\de}\left\{\pi - 4\log 69 - 3 \log 2\right\} \notag
\\ & < \frac{4(1-\theta)}{\de^2}
\left(4.93 +2\log (A+\kappa)+ 3 \log\frac1{a-\theta} \right)
\end{align}
Much like the above, specifying to $s=z$ and using the estimate \eqref{eq:nwhentsmall} of $N$ now we infer
$$
\left|G'(z)-\left(\frac{\ze'}{\ze}(z)+\frac{1}{z-1}-\sum_{\rho\in S} \frac{1}{z-\rho}
\right)\right| \le \frac{3N}{2\de} \le \frac{1-\theta}{\de^2} \left(14+6\log(A+\kappa)+6\log \frac{1}{1-\theta}\right),
$$
whence combining with \eqref{MgD3-smallt} we obtain the assertion.
\end{proof}

\begin{lemma}\label{l:path-translates} For any given parameter
$\theta<b<1$, and for any finite and symmetric to zero set $\A\subset[-iB,iB]$ of cardinality $\#\A=n$, there exists a broken line $\Gamma=\Gamma_b^{\A}$, symmetric to the real axis and consisting of horizontal and vertical line segments only, so that its upper half is
$$
\Gamma_{+}= \bigcup_{k=1}^{\infty}
\{[\sigma_{k-1}+it_{k-1},\sigma_{k-1}+it_{k}] \cup
[\sigma_{k-1}+it_{k},\sigma_{k}+it_{k}]\}
$$
with $\sigma_j\in [\frac{b+\theta}{2},b]$, ($j\in\NN$),  $t_0=0$, $t_1\in[4,5]$ and $t_j\in
[t_{j-1}+1,t_{j-1}+2]$ $(j\geq 2)$ and satisfying that the
distance of any $\A$-translate $\rho+ i\alpha ~(i\alpha\in\A)$ of a $\zeta$-zero $\rho$ from any point $s=t+i\sigma \in \Gamma$ is at least $d:=d(t):=d(b,\theta,n,B;t)$ with
\begin{equation}\label{ddist-corr}
d(t):=\frac{(b-\theta)^2}{4n \left(4.4 \log(|t|+B+5) + 51 \log (A+\kappa) + 31 \log\frac{1}{b-\theta}+ 113\right)}.
\end{equation}
Moreover, the same separation from translates of $\zeta$-zeros holds also for the
whole horizontal line segments $H_k:=[\frac{b+\theta}{2}+it_k,2+it_k]$, $k=1,\dots,\infty$, and their reflections $\overline{H_k}:=[\frac{b+\theta}{2}-it_k,2-it_k]$, $k=1,\dots,\infty$, and furthermore the same separation holds from the translated singularity points $1+i\al$ of $\zeta$, too.
\end{lemma}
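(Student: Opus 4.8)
The plan is to construct $\Gamma_{+}$ one piece at a time, at every step first fixing a horizontal level $t_k$ inside a short interval and then a vertical abscissa $\sigma_{k-1}\in[\frac{b+\theta}{2},b]$, each choice being made by a pigeonhole (measure) argument that keeps the new segment at distance $\ge d$ from every translated $\ze$-zero and every translated pole $1+i\al$, where $d=d(t)$ is the quantity in \eqref{ddist-corr}. Put $b':=\frac{b+\theta}{2}$, so that $b'-\theta=\frac12(b-\theta)$. By \eqref{reciprok} we have $\ze(s)\ne 0$ for $\Re s>1$, so all zeros lie in $\theta<\Re\rho\le 1$; $\ze$ has the single pole at $1$; and since $\ze(\bar s)=\overline{\ze(s)}$ and $\A$ is symmetric to $0$, the whole set of translated zeros and poles is symmetric to $\RR$. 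Hence it suffices to build the upper half $\Gamma_{+}$ with the asserted separation (the base point $\sigma_0+it_0=\sigma_0$ lying on $\RR$), and then reflect.

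The quantitative input is a uniform \emph{local zero count}: for any interval $J=[u,u+L]$ with $0<L\le 5$ and any $c$ with $\theta<c\le 1$, the number of $\ze$-zeros $\rho$ with $\Re\rho\ge c$ and $\Im\rho\in J$ is at most $\frac{C}{c-\theta}\bigl(\log(|u|+L+7)+\log\frac{(A+\kappa)^2}{c-\theta}+C_0\bigr)$ with absolute constants $C,C_0$. When $J$ is disjoint from $\{|t|<5\}$ this is immediate from \eqref{zerosbetween} of Lemma \ref{c:zerosinrange} (applied with $R,T$ the endpoints of $J$, so $T-R=L$); when $J$ meets $\{|t|<5\}$ it follows from \eqref{zerosinth-corr} of Lemma \ref{l:Littlewood} with $T:=\max(5,|u|+L+1)$, the slight threshold mismatches being absorbed into the constants. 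The essential feature is that $L$ is bounded \emph{independently of $B$}: although a translation $i\al\in\A$ may shift a zero by up to $|\al|\le B$, the window of zeros that can influence one fixed translation parameter always has bounded height, so $B$ enters the count only through the logarithm of a shifted centre.

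We now build $\Gamma_{+}$ inductively, in the order $t_1,\sigma_0,t_2,\sigma_1,t_3,\sigma_2,\dots$. (This order is consistent: the admissibility of each horizontal or vertical segment depends only on parameters fixed at or before the step that creates it.) \emph{Horizontal step.} To choose $t_1\in[4,5]$, or $t_k\in[t_{k-1}+1,t_{k-1}+2]$ for $k\ge 2$: call a value $t_k$ \emph{forbidden} if the segment $[b'+it_k,\,2+it_k]$ comes within $d$ of some translate $\rho+i\al$ with $\Re\rho\ge b'-d$, or of some translated pole $1+i\al$. For a fixed $i\al\in\A$ a relevant zero has $\Im\rho$ in an interval of length $\le 1+2d$, so by the local count there are at most $\frac{C}{b'-d-\theta}\bigl(\log(|t_k|+B+7)+\log\frac{(A+\kappa)^2}{b-\theta}+C_0'\bigr)$ of them, each forbidding a $t_k$-interval of length $\le 2d$; the pole $1+i\al$ likewise forbids an interval of length $\le 2d$. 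Summing over the $n$ elements of $\A$, and using $b'-d-\theta\ge\frac14(b-\theta)$ (valid since $d(t)\le\frac{(b-\theta)^2}{4\cdot 113}<\frac14(b-\theta)$), the total measure of forbidden $t_k$ is bounded by a constant multiple of $\frac{dn}{b-\theta}\bigl(\log(|t_k|+B+7)+\log(A+\kappa)+\log\frac1{b-\theta}+1\bigr)$, which is strictly less than $1$ — the length of the candidate interval — by the choice of $d(t)$ in \eqref{ddist-corr}. \emph{Vertical step.} To choose $\sigma_0\in[b',b]$ for the segment $\{\Re s=\sigma_0,\ \Im s\in[0,t_1]\}$, or $\sigma_{k-1}\in[b',b]$ for $\{\Re s=\sigma_{k-1},\ \Im s\in[t_{k-1},t_k]\}$ (of height $\le 2$): call $\sigma$ forbidden if this vertical segment comes within $d$ of a translated zero with $\Re\rho\ge b'-d$ or of a translated pole. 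For fixed $i\al$ the relevant zeros satisfy $\Re\rho\in[b'-d,\,b+d]$ and $\Im\rho$ lies in an interval of length $\le 5+2d$, so again there are at most $\frac{C}{b'-d-\theta}\bigl(\log(|t_k|+B+7)+\cdots\bigr)$ of them; each forbids a $\sigma$-interval of length $\le 2d$, as does each relevant translated pole. Summing over $\A$, the forbidden $\sigma$-set has measure bounded by a constant multiple of $\frac{dn}{b-\theta}\bigl(\log(|t_k|+B+7)+\log(A+\kappa)+\log\frac1{b-\theta}+1\bigr)$, which is strictly less than $\frac12(b-\theta)$ — the length of $[b',b]$ — again by \eqref{ddist-corr}; this is the more restrictive of the two conditions and is what produces the factor $(b-\theta)^2$ there. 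A non-forbidden value thus exists at every step, so the construction goes through and yields $\Gamma_{+}$ with the required separation from all translated zeros.

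Two loose ends. The horizontal segments $H_k=[\frac{b+\theta}{2}+it_k,\,2+it_k]$ (and their reflections) are automatically $d$-separated, since the horizontal step was imposed on the whole of $[b',2]\times\{t_k\}$ and included the translated poles $1+i\al$ (which lie in that strip); the vertical segments were likewise kept $d$-away from those poles, so $\Gamma$ avoids all translated singularities. Finally, the explicit constants $4.4,51,31,113$ of \eqref{ddist-corr} are (deliberately generous) values chosen so that, after inserting the explicit bounds of \eqref{zerosbetween} and \eqref{zerosinth-corr} into the local count and absorbing the three factor-$2$ losses — from $b'-\theta=\frac12(b-\theta)$, from replacing $b'$ by $b'-d$, and from bounding $\sum_{i\al\in\A}$ by $n$ times the worst single term — the forbidden set provably stays inside the candidate interval; one also checks along the way the harmless inequalities $d(t)\le\frac14(b-\theta)$ and $d(t)<1$, both following from $d(t)\le\frac{(b-\theta)^2}{452}$. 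The geometry and the pigeonhole step are routine; the only real work, and the main obstacle, is this calibration of the constants.
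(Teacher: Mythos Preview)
Your proposal is correct and follows essentially the same approach as the paper: an inductive construction of the $t_k$ and $\sigma_k$ via a measure (pigeonhole) argument, with the local zero counts supplied by Lemmas~\ref{l:Littlewood} and~\ref{c:zerosinrange}, and with translated poles handled by an additional small exclusion. The paper differs only in minor bookkeeping details (it fixes the real-part threshold at $p:=0.51\theta+0.49b$ rather than $b'-d$, chooses all the $t_k$ before all the $\sigma_k$, and---most substantially---carries out the explicit numerical verification that the constants $4.4,\,51,\,31,\,113$ suffice, which you rightly flag as the main work but do not perform).
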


\begin{proof} Let us denote $a:=\frac{b+\theta}{2}$, and $p:=0.51\theta+0.49b$. Note that $a-p=0.01(b-\theta)>d$, so that any translated zeta-zero with real part $\beta:=\Re \rho=\Re(\rho+i\alpha)\le p~(\alpha\in\A)$ necessarily has distance at least $d$ from the whole curve $\Gamma$ and from all the $H_k$, all lying in the half-plane $\Re s\ge a$.

According to \eqref{zerosinth-corr} in Lemma \ref{l:Littlewood}, the number of zeroes in the rectangle
$Q(p,6):=[p,1]\times [-i6,i6]$ satisfies
\begin{align}\label{Np6}
N(p,6) &
\le \frac{1/0.49}{b-\theta} \left\{ \frac1{2}6 \log 6 + 12\log(A+\kappa) + 6 \log\frac{1/0.49}{b-\theta} + 18 \right\} \notag \\& \le \frac{1}{b-\theta}
\left\{25\log(A+\kappa)+12.5 \log\dfrac{1}{b-\theta}+57\right\}=:X.
\end{align}

Let now $R\ge 5$ be any parameter. Then for estimating the number of $\ze$-zeroes in the rectangle
$[p,1]\times[iR,i(R+1)]$ we can apply \eqref{zerosbetween} Lemma \ref{c:zerosinrange} with $T:=R+1$ to get
\begin{align}\label{Np5R}
N(p, R, R+1)  & \le
\frac{1}{p-\theta} \left\{ \frac{4}{3\pi} \left(\log (R+1) + \log\left(\frac{11.4 (A+\kappa)^2}{p-\theta}\right)\right)  + \frac{16}{3}  \log\left(\frac{60 (A+\kappa)^2}{p-\theta}\right)\right\}
\notag
\\& < \frac{1}{b-\theta} \left\{\log (R+1) +  \log\left(\frac{11.4 (A+\kappa)^2}{0.49(b-\theta)}\right)  + \frac{16}{3\cdot0.49}  \log\left(\frac{60 (A+\kappa)^2}{0.49(b-\theta)}\right)\right\} \notag
\\ \notag & < \frac{1}{b-\theta} \left\{\log (R+1) + 24 \log(A+\kappa)+12 \log\dfrac{1}{b-\theta}+56\right\}
\\ & < \frac{1}{b-\theta} \log (R+1)+X =  :X_0(R).
\end{align}
Note that $X_0(R)$ in \eqref{Np5R} is increasing in function of $R\ge 5$, and it exceeds \eqref{Np6}, always.

First we describe how we choose the horizontal line segments $H_k$, i.e. the values of the coordinates $t_k$, to satisfy that the segments $H_k:=[a+it_k,2+it_k]$ avoid $\ze$-zeroes and translated zeta-zeroes by at least $d$. We need to construct only $\{t_k\}_{k\in \NN}$ in view of the obvious symmetry (of both the curve $\Gamma$ and the set of all translated zeta-zeroes) with respect to $\RR$.

Our choice is inductive; first we choose $t_1\in [4,5]$, and then inductively $t_k\in [t_{k-1}+1,t_{k-1}+2]$.
(Recall that $t_0:=0$, but it is a technical choice only and we do not claim to have estimates on the corresponding horizontal line segment, not being part of $\Gamma$.)

Let $i\alpha \in \A $ be given arbitrarily. The translated zeta-zero $\rho+i\alpha$ has imaginary part $\gamma+\alpha$, and so the set of vertical coordinate values $t$ for which any point of the horizontal line segment $[a+it,2+it]$ get closer to this zero than $d$ is restricted to the coordinates in $[\gamma+\alpha-d,\gamma+\alpha+d]$, having measure $2d$. So if we can specify some $t_1\in [4+d,5-d]$ such that it avoids this $2d$-measure prohibited interval for each $\alpha\in\A$ and $\rho$ with $\beta\ge p$, then $t_1$ is appropriate for our first choice. Now $[\gamma+\alpha-d,\gamma+\alpha+d]$ meshes into $[4+d,5-d]$ only in case $\gamma \in [4-\alpha,5-\alpha]$, so that the total measure of prohibited $t$-values for $t_1$ is at most $\sum_{i\alpha\in\A} 2d N(p,4-\alpha,5-\alpha) \le 2d n \max_{R \in [4-B,4+B]} N(p,R,R+1)= 2dn \max_{0\le R\le 4+B} N(p,R,R+1)$. If $B\le 1$, then these zero numbers are below $N(p,6)$, estimated by \eqref{Np6}, and if $B\ge 1$ then $B+4\ge 5$ and Lemma \ref{c:zerosinrange} applies, resulting in the estimate \eqref{Np5R} with $R:=B+4$. All these are below $X_0(B+4)$ (even if $B\le 1$ and $B+4\in [4,5)$, in which case we need only $X_0(B+4)\ge X$), so that the total measure of prohibited $t$-values for $t_1$ is at most $2dnX_0(B+4)$, while the full measure of $[4+d,5-d]$ is $1-2d$. Now excluding also all possible intervals of maximum length $2d$ containing values $|t+\al|<d$ for any $\alpha\in \AA$, we will still have a measure at least $1-2d-n2d>1/2$. Thus there is room for an admissible $t_1$ whenever $2nd X_0(B+4)$ stays below $1/2$, or, equivalently, if
$$
d < \frac{1}{4n X_0(B+4)}=\frac{b-\theta}{4n \left\{\log(B+5)+ 25 \log(A+\kappa)+12.5 \log\dfrac{1}{b-\theta} +57\right\}}.
$$
This is clearly guaranteed by \eqref{ddist-corr} with $t=t_1\in [4,5]$ because $\log(B+5)\le \log(|t|+B+5)$.

\bigskip
The argument for the choice of any $t_k$ ($k\ge 2$) is analogous. The only difference here is that in place of $[4+d,5-d]$ we now look for an admissible $t_k$ in the segment $[t_{k-1}+1+d,t_{k-1}+2-d]$, again of measure $1-2d$, so that after exclusion of all points with $|t+\al|<d$, still of measure $>1/2$. Noting that the value of $R:=R_k:=t_{k-1}+1$ is at least 5, the measure of the prohibited set can now be estimated by $2dn X_0(t_{k-1}+B+1)$, resulting in the condition
$$
d < \frac{1}{4n X_0(t_k+B)},
$$
in view of $t_{k-1}+1\le t_k$. This is again clearly guaranteed by \eqref{ddist-corr} with $t=t_k$.

\bigskip
The room for the choice of some appropriate $\si_k$ for the vertical line segments $V_k:=[\si_k+it_k,\si_k+it_{k+1}]$ is a little tighter. Again, it will suffice to ascertain the existence of an appropriate $\si_k$ for each $k\ge 0$ in view of the symmetry of the configuration.

The starting step is to pick $\si_0$ from the interval $[a,b]$ having measure $b-a=(b-\theta)/2$, such that $V_0:=[\si_0,\si_0+it_1]$ -- and whence by symmetry also the reflected segment and their union $[\si_0-it_1,\si_0+it_1]$ -- avoids all translated zeroes $\rho+i\alpha$ by at least $d$, and, further, it also avoids the translated poles $1+i\al$ by at least $d$. To ensure the latter requirement, we will simply take $\si \in [a,b-d]$, so that the distance from translated poles will be at least $d$, always. So given that $t_1\le 5-d$, it thus suffices to ensure that $W_0:=[\si_0,\si_0+i(5-d)]$ avoids all translated zeroes $\rho+i\alpha$ by at least $d$.

As above, consider any translated zero $\rho+i\alpha$ with $i\alpha\in \A$. Then in case $\Im (\rho+i\alpha)=\gamma +\alpha \not \in [-d,5]$, we necessarily see an avoidance by at least $d$: so that we need to take into account only zeta-zeroes in the rectangle $[a-d,b+d]\times [(-\alpha-d)i,(-\alpha+5)i] \subset [p,1]\times [(-\alpha-d)i,(-\alpha+5)i]$. Therefore, the number of possibly interfering zeroes does not exceed $\max_{-B \le \alpha \le B} N(p,-\alpha-d,-\alpha+5)=\max_{0 \le \alpha \le B} N(p,\alpha-d,\alpha+5)$.

The zeroes accounted for in $N(p,\alpha-d,\alpha+5)$ are covered by the ones accounted for in $N(p,5)$ and in $N(p,\max(\alpha-d,5),\alpha+5)$ together. So with any given $\alpha\ge 0$ let us put $R:=\max(\alpha-d,5)\ge 5$ and $T:=\alpha+5$; then $T-R\le 5+d\le 5.01$ and \eqref{zerosbetween} of Lemma \ref{c:zerosinrange} furnishes
\begin{align}\label{Np10R}
N(p,R,T)
& \le \frac{1}{0.49(b-\theta)} \left\{ \frac{20.04}{3\pi} \left(\log T + \log\left(\frac{11.4 (A+\kappa)^2}{0.49(b-\theta)}\right) \right) + \frac{16}{3}  \log\left(\frac{60 (A+\kappa)^2}{0.49(b-\theta)}\right)\right\} \notag
\\ & \le
\frac{1}{b-\theta} \left\{4.4\log (B+5) + 30.5 \log(A+\kappa)+25\log\dfrac{1}{b-\theta}+66\right\}
\end{align}
Estimating $N(p,5)$ by \eqref{zerosinth-corr} in Lemma \ref{l:Littlewood} -- similarly to \eqref{Np6} -- and adding we are led to
\begin{align}\label{Np10-full}
N(p, &\alpha-d,\alpha+5) \le \frac{1}{p-\theta}
\left\{\frac{1}{2} \log 5 +2 \log(A+\kappa) + \log\frac{1}{p-\theta} + 3 \right\} 5 +N(p,R,T)
\notag \\& <
\frac{1}{b-\theta} \left\{4.4 \log (B+5) + 51\log(A+\kappa)+31\log\dfrac{1}{b-\theta}+112.5\right\}=:Y_0(B)
\end{align}
Therefore the arc measure of $\si$-values closer than $d$ to the real part $\beta$ of any of the translated zeroes $\rho+i\alpha$ with $\rho\in [p,1]\times [i(\alpha-d),i(\alpha+5)]$ and $\alpha \in \A$ does not exceed $2dnY_0(B)$, while the total measure of available values for $\si_0$ is $(b-d)-a=\frac{b-\theta}{2}-d$, recalling that we have already excluded also the interval $[b-d,b]$, possibly too close to some translated pole $1+i\al$. Note that with this last exclusion we guarantee that no translated pole can get closer than $d$ to any point $\si_0+it$, whatever is $t\in \RR$.
So finally it suffices to have
$$
d \le \frac{b-\theta}{4nY_0(B)+2}< \frac{(b-\theta)^2}{4n \left(4.4 \log(|t|+B+5) + 51 \log (A+\kappa) + 31 \log\frac{1}{b-\theta}+ 113\right)},
$$
guaranteed in the condition \eqref{ddist-corr} of the statement.

\bigskip
Finally, for the selection of the coordinates $\si_k$ with $k\ge 1$ we work similarly. We are to choose a coordinate $\si_k\in[a,b]$ so that the vertical segment $W_k:=[\si_k+it_k,\si_k+i(t_k+2-d)]$ -- and then consequently also $V_k\subset W_k$ -- avoids all translated zeroes $\rho+i\alpha$ of $\ze$ by at least $d$. Recall that the value $t_k$ was chosen\footnote{At least for $k\ge 2$. For $k=1$, technically there is a problem with the argument because of $t_0:=0$, and not $3$. But we can conduct this calculation taking, technically, a modified $t_0:=3$ here, so that the choice $t_1$ is in the right interval. We leave to the reader the checking of the necessary technical modifications and that the above obtained estimates remain valid also for $k=1$.} from $[t_{k-1}+1+d,t_{k-1}+2-d]$, so that any zeta-zero closer than $d$ must have imaginary part $\gamma+\alpha \in  [t_{k-1}+1,t_{k-1}+2]$ and real part $\beta\ge a-d > p$.

The number of all these zeroes is $N(p,t_{k-1}+1-\alpha,t_{k-1}+2-\alpha)$. As $\alpha\in[-B,B]$
we find $N(p,t_{k-1}+1-\alpha,t_{k-1}+2-\alpha) \le \max_{t_{k-1}-B +1 \le R\le t_{k-1}+B+1} N(p,R,R+1)= \max_{0 \le R\le t_{k-1}+B+1} N(p,R,R+1) \le \max(N(p,6) ~,~\max_{5 \le R \le t_{k-1}+B+1} N(p,R,R+1)) \le X_0(\max(6,t_{k-1}+B+1))$, according to \eqref{Np6} and \eqref{Np5R} above. Therefore noting that $t_{k-1}+1\le t_k$ and using the monotonicity of $X_0$ again, we find that the prohibited set is of measure $\le 2nd X_0(\max(6,t_k+B))\le X_0(t_k+B+2)$ -- using $t_k\ge 4$, i.e. $t_k+2\ge 6$ here -- while the room available for our choice of $\si_k$ is again $b-d-a=\frac{b-\theta}{2}-d$. Whence there is a choice for $\si_k$ whenever
$d  < \frac{b-\theta}{4n X_0(t_k+B+2)+2}$.

Here we have for $s\in V_k \subset \Gamma$ that $t_k \le t \le t_{k+1}\le t_k+2$, so that the term $\log(R+1)=\log((t_k+B+2)+1)$, occurring in $X_0(t_k+B+2)$, can be estimated from above by $\log(t+B+3)$. Thus for finding an appropriate value of $\si_k$ it also suffices to have
$$
d < \frac{(b-\theta)^2}{4n \left\{\log (t+B+3) + 25 \log(A+\kappa)+12.5 \log\dfrac{1}{b-\theta}+57  \right\}+2(b-\theta)}.
$$
Here it becomes clear that this is always satisfied whenever \eqref{ddist-corr} holds, therefore we indeed have an appropriate choice for $\si_k \in[a,b]$, as wanted.
\end{proof}

\bigskip
\begin{lemma}\label{l:zzpongamma-c} For any $0<\theta<b<1$ and symmetric to $\RR$ translation set $\A\subset [-iB,iB]$, on the broken line $\Gamma=\Gamma_b^{\A}$, constructed in the above Lemma \ref{l:path-translates}, as well as on the horizontal line segments $H_k:=[a+it_k,2+it_k]$ and  $\overline{H_k}$, $k=1,\dots,\infty$ with $a:=\frac{b+\theta}{2}$, we have uniformly for all $\alpha \in \A$
\begin{equation}\label{linezest-c}
\left| \frac{\ze'}{\ze}(s+i\alpha) \right| \le n
\frac{1-\theta}{(b-\theta)^{3}} \left(6 \log(|t|+B+5)+60\log(A+\kappa) + 40 \log\frac1{b-\theta}+ 140\right)^2.
\end{equation}
\end{lemma}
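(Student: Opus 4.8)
The plan is to read $\ze'/\ze(s+i\alpha)$ off the local zero-expansion of Lemma~\ref{l:borcar} and then convert the resulting sum over nearby zeros into a bounded quantity by exploiting the $d$-separation built into $\Gamma$ by Lemma~\ref{l:path-translates}. Fix $s$ on $\Gamma$ (or on some $H_k$, $\overline{H_k}$) and $\alpha$ with $i\alpha\in\A$; set $w:=s+i\alpha$, $\sigma:=\Re w$, $\tau:=\Im w$, and abbreviate $L:=\log(|\Im s|+B+5)$, $\mathcal A:=\log(A+\kappa)$, $\mathcal B:=\log\frac1{b-\theta}$. Since $\A$ is symmetric in $\RR$ we have $-i\alpha\in\A$, so for any $\zeta$-zero $\rho$ the point $\rho-i\alpha$ is an $\A$-translate of a zero and hence, by Lemma~\ref{l:path-translates}, has distance $\ge d:=d(\Im s)$ from $s$; this rewrites as $|w-\rho|\ge d$, and likewise $|w-1|\ge d$ from the translated pole $1-i\alpha$. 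I will use throughout that on $\Gamma$ (and on the part of the $H_k$ with $\Re\le1$) one has $\sigma-\theta\ge\tfrac12(b-\theta)$, that $\log|\tau|=\log|\Im s+\alpha|<L$ whenever $|\tau|>1$, that $\log\frac1{1-\theta}\le\mathcal B$, and that $\frac1d=\frac{4n}{(b-\theta)^2}\bigl(4.4L+51\mathcal A+31\mathcal B+113\bigr)$ by \eqref{ddist-corr}.

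\textbf{Case $\sigma\le1$} (all of $\Gamma$, and the parts of $H_k,\overline{H_k}$ with $\Re\le1$). Apply Lemma~\ref{l:borcar} at $z=w$ with $a=\sigma$, $\delta=(\sigma-\theta)/3$, and let $S$ be the $\zeta$-zeros within $\delta$ of $w$; use \eqref{zlogprime} if $|\tau|\ge e^{5/4}+\sqrt3$ and \eqref{zlogprime-tsmall} if $|\tau|\le5.23$ (the latter producing an extra term $\frac1{w-1}$). In either case $\bigl|\ze'/\ze(w)\bigr|\le\sum_{\rho\in S}\frac1{|w-\rho|}+\bigl(\text{pole term}\le\tfrac1d\ \text{if}\ |\tau|\le5.23\bigr)+\frac{9(1-\theta)}{(\sigma-\theta)^2}\bigl(c_1+14\mathcal A+c_3\log\frac1{\sigma-\theta}+c_4\log|\tau|\bigr)$ with $(c_1,c_3,c_4)\in\{(22.5,14,5),(34,18,0)\}$. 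By the separation $\sum_{\rho\in S}\frac1{|w-\rho|}\le\#S/d$, and $\#S$ is bounded by the number $N$ of zeros in the disk $D_1$ of Lemma~\ref{l:borcar}, which by Remark~\ref{r:alsoindisk} together with \eqref{zerosinDone} (or \eqref{eq:nwhentsmall} in the small-$\tau$ subcase) satisfies $N\le\frac{1-\theta}{\delta}\bigl(c_5+4\mathcal A+4\log\frac1{1-\theta}+\tfrac23\log|\tau|\bigr)$, $c_5\in\{\tfrac{25}3,\tfrac{28}3\}$. Now substitute $\frac1\delta=\frac3{\sigma-\theta}\le\frac6{b-\theta}$, $\frac1{(\sigma-\theta)^2}\le\frac4{(b-\theta)^2}$, $\log\frac1{\sigma-\theta}\le\mathcal B+\log2$, $\log|\tau|\le L$, $\log\frac1{1-\theta}\le\mathcal B$, and absorb the error and pole contributions into the scale $\frac{n(1-\theta)}{(b-\theta)^3}$ using $n\ge1$ and $b-\theta<1$. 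The dominant term $N/d$ then has the shape $\frac{n(1-\theta)}{(b-\theta)^3}$ times a product of two $\log$-linear forms in $L,\mathcal A,\mathcal B$, and a direct comparison of coefficients shows the total is at most $\frac{n(1-\theta)}{(b-\theta)^3}\bigl(6L+60\mathcal A+40\mathcal B+140\bigr)^2$, which is \eqref{linezest-c}.

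\textbf{Case $\sigma>1$} (horizontal segments only). Here $\zeta$ is zero-free. If $\sigma\ge\tfrac32$ the coefficientwise estimate $|\ze'/\ze(w)|\le-\ze'/\ze(\sigma)\le-\ze'/\ze(\tfrac32)$ is $O_{A,\kappa}(1)$, far below the claim. For $1<\sigma<\tfrac32$ I write $\ze'/\ze(w)=-\frac1{w-1}+\frac{d}{dw}\log\bigl((w-1)\zeta(w)\bigr)$, bound $\frac1{|w-1|}\le\frac1d$ by the separation from the translated pole, and estimate the logarithmic derivative of the analytic, non-vanishing function $(w-1)\zeta(w)$ by a Borel--Carath\'eodory/Cauchy argument copied from the second half of the proof of Lemma~\ref{l:borcar}, using \eqref{zssmin1} for the size of $(w-1)\zeta(w)$ and, near $w=1$, the zero-free disk \eqref{polenozero}; the outcome is again dominated by the estimate of the previous case.

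The one genuinely laborious step is the constant bookkeeping in the first case: one must carry the two alternative constant-triples of Lemma~\ref{l:borcar}, the two zero-count options behind Lemma~\ref{l:Jensen}, and the explicit form of $d(\cdot)$ through all the substitutions and then verify the single polynomial inequality that collapses everything into the square on the right of \eqref{linezest-c}. A milder, but also slightly annoying, point is the $\sigma>1$ portion of the $H_k$, where Lemma~\ref{l:borcar} does not literally apply and the pole-extraction/Borel--Carath\'eodory estimate has to be redone by hand, with care taken as $\sigma\downarrow1$.
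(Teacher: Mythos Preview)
Your approach is the paper's approach: apply Lemma~\ref{l:borcar} at $w=s+i\alpha$, bound each $1/|w-\rho|$ by $1/d(t)$ using the separation from Lemma~\ref{l:path-translates} (your observation that symmetry of $\A$ is what converts $|s-(\rho-i\alpha)|\ge d$ into $|w-\rho|\ge d$ is exactly right, though the paper leaves it implicit), count $\#S$ via Remark~\ref{r:alsoindisk}, and add up.

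Two small points of divergence. First, the paper does \emph{not} bound $\#S$ by the disk $D_1$ attached to the point $w$ (which is what \eqref{zerosinDone} gives, with $\tfrac1\delta\le\tfrac6{b-\theta}$); instead it observes that since $\sigma\ge a=\tfrac{b+\theta}{2}$, the $\delta$-neighborhood of $w$ sits inside the \emph{fixed} disk $\mathcal D_r$ of Remark~\ref{r:alsoindisk} taken with $q=\tfrac{b+2\theta}{3}$, yielding $N\le\tfrac{1-\theta}{b-\theta}(2\log_+|t+\alpha|+12\mathcal A+12\mathcal B+28)$. This halves the $L$-coefficient relative to your bound, and is what makes the footnoted inequality $(3L+30\mathcal A+20\mathcal B+59)^2\ge(2L+12\mathcal A+12\mathcal B+28)(4.4L+51\mathcal A+31\mathcal B+113)$ close; with your $N$-bound the $L^2$-coefficient on the left comes out around $70$ against a target of $36$, so your ``direct comparison of coefficients'' would not actually deliver the constants in \eqref{linezest-c} as written (it gives the same shape with larger numbers). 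Second, and in your favor: the paper simply invokes Lemma~\ref{l:borcar} on all of $H_k$, including $1<\sigma\le2$, where the hypothesis $a\le1$ of that lemma is violated; your separate treatment of $\sigma>1$ via the trivial Dirichlet-series bound and a pole-extraction is the honest way to handle that strip.
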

\begin{proof} By symmetry, it suffices to work out the estimates on the upper halfplane i.e. for an arbitrary $s\in \Gamma_{+} \cup (\cup_{k=1}^\infty H_k)$. Let us fix such a value of $s=\si+it$, as well as an $\alpha\in \A$.

Formula \eqref{zlogprime} and \eqref{zlogprime-tsmall} of Lemma \ref{l:borcar} provide us with $z:=s+i\al=\si+i(t+\alpha)$ and $\epsilon=1$ or $0$ according to $|t+\al|<5.3$ or not, respectively, the estimate
\begin{align}\label{logzetaprimesum} \notag
\left| \frac{\ze'}{\ze}(s+i\alpha) \right| & \le
\left|\sum_{\rho\in S} \frac{1}{(s+i\al)-\rho}\right|  + \epsilon \left|\frac{1}{(s+i\al)-1}\right| +
\\ & + \frac{9(1-\theta)}{(\si-\theta)^2}
\left(34+14\log(A+\kappa)+18\log \frac{1}{\si-\theta} + 5\log_{+} |t+\al|\right),
\end{align}
where $S$ stands for the multiset of $\ze$-zeroes within distance $\de:=\frac{\si-\theta}{3}$ from $z=s+i\al$.

Note that the leftmost point of this neighborhood of $z=s+i\al$ is $\frac{2\si+\theta}{3}+i(t+\al)$, still lying to the right of $q+i(t+\al)$ with $q:=\frac{2a+\theta}{3}=\frac{b+2\theta}{3}$. Therefore these neighboring zeros belong also to the disk centered at $p+i(t+\al)$ (where $p:=1+(1-\theta)$) and of radius $r:=p-q$, with the above $q$. Referring to Remark \ref{r:alsoindisk} provides that the number $N$ of these zeroes is thus estimated by
\begin{align*}
N:=N(S) & \le
\frac{1-\theta}{\frac32(q-\theta)}\left(\log_{+}|t+\al| + 6\log(A+\kappa) + 6\log\frac1{1-\theta}+14\right)
\notag \\ & \le \frac{1-\theta}{b-\theta}\left(2\log_{+}(t+B) + 12\log(A+\kappa) + 12\log\frac1{1-\theta}+28\right).
\end{align*}
Further, the absolute value of each term in the sum $\sum_{\rho\in S} \frac{1}{(s+i\al)-\rho}$ over $S$ is at most $1/d(t)$ with the value of $d(t)$ given in \eqref{ddist-corr}, whence\footnote{Using also the inequality $(3X+30Y+20Z+59)^2\ge (2X+12Y+12Z+28)(4.4+51Y+31Z+113)$, for all $X,Y,Z\ge 0$.} we get
\begin{align*}
\left|\sum_{\rho\in S} \frac{1}{(s+i\al)-\rho}\right|
\le &\frac{4n(1-\theta)}{(b-\theta)^3}  \left(2 \log(t+B+5)+12\log(A+\kappa) +12\log\frac1{b-\theta}+28\right)
\\ & \qquad \times\left(4.4 \log(t+B+5) + 51 \log (A+\kappa) + 31 \log\frac{1}{b-\theta}+ 113\right)
\\ & \le \frac{4n(1-\theta)}{(b-\theta)^3} \left(3\log(t+B+5)+30\log(A+\kappa) + 20 \log\frac1{b-\theta}+ 59\right)^2.
\end{align*}
The rightmost expression in \eqref{logzetaprimesum} is an order of magnitude smaller. Indeed, writing in $\si\ge a =\frac{b+\theta}{2}$, $\log\frac{1}{\si-\theta}\le \log\frac{2}{b-\theta}$, $\log_{+}(|t+\alpha|)\le \log(t+B+5)$ and $1\le n/(b-\theta)$ we get the upper bound
\begin{align*}
&\le
\frac{4n(1-\theta)}{(b-\theta)^3} \cdot 9 \cdot \left(5\log_{+}(t+B)+14\log(A+\kappa)+18\log \frac{1}{b-\theta} + 47\right)
\\
& <
\frac{4n(1-\theta)}{(b-\theta)^3} \cdot 18 \cdot \left(3\log(t+B+5)+30\log(A+\kappa) + 20 \log\frac1{b-\theta}+ 59\right).
\end{align*}

Finally, in case $\epsilon=1$, we have a term $|1/(\si+i(t+\alpha)-1)|\le 1/d(t)$, with the value of $d(t)$ given in \eqref{ddist-corr}.
Let us put $X:=3\log(t+B+5)+30\log(A+\kappa) + 20 \log\frac1{b-\theta}+ 59$. Then it is clear that we have
$\frac{1}{d} \le \frac{4n(1-\theta)}{(b-\theta)^3}  \left(4.4 \log(|t|+B+5) + 51 \log (A+\kappa) + 31 \log\frac{1}{b-\theta}+ 113\right) \le \frac{4n(1-\theta)}{(b-\theta)^3} 2X$.
Collecting our estimates and substituting into \eqref{logzetaprimesum} we thus get
$$
\left| \frac{\ze'}{\ze}(s+i\alpha) \right| \le \frac{4n(1-\theta)}{(b-\theta)^3} \left\{ X^2 + 18X + 2X \right\}
= \frac{4n(1-\theta)}{(b-\theta)^3} (X^2+20 X) \le \frac{4n(1-\theta)}{(b-\theta)^3} (X+10)^2.
$$
The assertion of the Lemma is proved.
\end{proof}

\section{A Riemann-von Mangoldt type formula of prime distribution}\label{sec:sumrho}

In the following we will denote the set of $\zeta$-zeroes, lying
to the right of $\Gamma$, by $\Z(\Gamma)$, and denote
$\Z(\Gamma,T)$ the set of those zeroes $\rho=\beta+i\gamma\in
\Z(\Gamma)$ which satisfy $|\gamma|\leq T$.

\begin{theorem}[Riemann-von Mangoldt formula]\label{l:vonMangoldt}
Let $\theta<b<1$ and $\Gamma=\Gamma_b^{\{0\}}$ be the curve defined in Lemma \ref{l:path-translates} for the one-element set $\A:=\{0\}$ with $t_k$ denoting the corresponding set of abscissae in the construction.
Then for any $k=1,2,\ldots$, and $4 \leq t_k<x$ we have
$$
\psi(x)=x - \sum_{\rho \in \Z(\Gamma,t_k)}
\frac{x^{\rho}}{\rho} + O\left( \frac{1-\theta}{(b-\theta)^{3}}
\left(A+\kappa+\log \frac{x}{b-\theta}\right)^3 x^b \right).
$$
\end{theorem}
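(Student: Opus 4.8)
The plan is to derive the formula from an effective truncated Perron inversion followed by a leftward contour shift onto the broken line $\Gamma$, in which the only residues met are the simple pole of $\zeta$ at $s=1$ (contributing $x$) and the $\zeta$-zeros lying to the right of $\Gamma$ up to height $t_k$ (contributing $-\sum x^{\rho}/\rho$). Fixing $c:=1+\tfrac1{\log x}$ and using the Dirichlet series $-\tfrac{\zeta'}{\zeta}(s)=\sum_{g\in\G}\Lambda(g)|g|^{-s}$ of \eqref{zetalogder} (absolutely convergent for $\Re s>1$), the effective truncated Perron formula yields
\[
\psi(x)=\frac{1}{2\pi i}\int_{c-it_k}^{c+it_k}\Bigl(-\frac{\zeta'}{\zeta}(s)\Bigr)\frac{x^{s}}{s}\,ds+R_{\mathrm P},\qquad R_{\mathrm P}\ll\sum_{g\in\G}\Lambda(g)\Bigl(\frac{x}{|g|}\Bigr)^{c}\min\!\Bigl(1,\tfrac{1}{t_k|\log(x/|g|)|}\Bigr).
\]

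To bound $R_{\mathrm P}$ I would split according to the distance of $|g|$ from $x$. For $|g|$ bounded away from $x$ the minimum contributes a factor $\ll1/t_k$, leaving $\frac{x^{c}}{t_k}\sum_{g}\Lambda(g)|g|^{-c}=\frac{x^{c}}{t_k}\bigl(-\tfrac{\zeta'}{\zeta}(c)\bigr)\ll\frac{x\log x}{t_k}$, since $-\tfrac{\zeta'}{\zeta}(c)\ll\tfrac1{c-1}$ follows from $\zeta$ having a simple pole at $1$ (cf.\ \eqref{zetadef} and the bounds near $s=1$ in Lemmas \ref{l:zkiss}--\ref{l:oneperzeta}). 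For $|g|$ close to $x$ I would invoke Axiom A in the form $\#\{g:|g|\in(y,y+h]\}=\kappa h+O(Ax^{\theta})$, which is a consequence of $\R(x)\ll x^{\theta}$, so that any interval of length $h$ about $x$ carries $\Lambda$-mass $\ll(\log x)(h+x^{\theta})$; a dyadic decomposition in $|x-|g||$ then shows that this part of $R_{\mathrm P}$ is $\ll\frac{x(\log x)^{2}}{t_k}+x^{\theta}\log x$. Hence $R_{\mathrm P}\ll\frac{x(\log x)^{2}}{t_k}+x^{b}$.

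Next I would shift the segment $\{\Re s=c,\;|\Im s|\le t_k\}$ leftward onto the part of $\Gamma=\Gamma_b^{\{0\}}$ with $|\Im s|\le t_k$, closing the region with the two horizontal connectors at heights $\pm t_k$ from $\Gamma$ (real part in $[\tfrac{b+\theta}{2},b]$) to $\Re s=c$; these connectors are sub-segments of the $H_k,\overline{H_k}$ of Lemma \ref{l:path-translates}, hence also stay away from $\zeta$-zeros. In the closed region the continuation of $\zeta$ is regular except for the simple pole at $1$, so $-\tfrac{\zeta'}{\zeta}(s)\tfrac{x^s}{s}$ is meromorphic there with residue $x$ at $s=1$ and residue $-\tfrac{x^\rho}{\rho}$ at each $\zeta$-zero $\rho$ to the right of $\Gamma$ with $|\gamma|\le t_k$ (a zero of order $m$ counted $m$ times, equivalently contributing residue $-m\tfrac{x^\rho}{\rho}$), while $s=0$ is not enclosed because $\Gamma$ lies in $\Re s\ge\tfrac{b+\theta}{2}>0$. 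Cauchy's theorem turns the Perron integral into $x-\sum_{\rho\in\Z(\Gamma,t_k)}\tfrac{x^\rho}{\rho}$ plus the integrals along $\Gamma\cap\{|\Im s|\le t_k\}$ and along the two connectors. On $\Gamma$ one has $x^{\Re s}\le x^{b}$ and, by Lemma \ref{l:zzpongamma-c} with $n=1,B=0$, $\bigl|\tfrac{\zeta'}{\zeta}(s)\bigr|\ll\frac{1-\theta}{(b-\theta)^{3}}\bigl(\log(|t|+5)+\log(A+\kappa)+\log\tfrac1{b-\theta}+1\bigr)^{2}$, while $\int_{\Gamma}\tfrac{|ds|}{|s|}\ll\log(t_k+2)$ ($|s|$ being bounded below near $t=0$ since $\Re s\ge\tfrac{b+\theta}{2}$); using $t_k<x$, the square from Lemma \ref{l:zzpongamma-c} times this extra logarithm collapses to $\frac{1-\theta}{(b-\theta)^{3}}(A+\kappa+\log\tfrac{x}{b-\theta})^{3}$, so the $\Gamma$-integral is $\ll\frac{1-\theta}{(b-\theta)^{3}}(A+\kappa+\log\tfrac{x}{b-\theta})^{3}x^{b}$. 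Along each connector $x^{\Re s}\le x^{c}\ll x$, $|1/s|\le1/t_k$, $|\tfrac{\zeta'}{\zeta}|\ll\mathrm{polylog}$ by Lemma \ref{l:zzpongamma-c}, and the length is $\le 2$, so the contribution is $\ll\frac{x}{t_k}\,\mathrm{polylog}$. Collecting all of the above,
\[
\psi(x)=x-\sum_{\rho\in\Z(\Gamma,t_k)}\frac{x^\rho}{\rho}+O\!\left(\frac{1-\theta}{(b-\theta)^{3}}\Bigl(A+\kappa+\log\tfrac{x}{b-\theta}\Bigr)^{3}x^{b}+\frac{x(\log x)^{2}}{t_k}\right),
\]
and in the natural range in which the formula is applied — $t_k$ of order $x$ — the second error term is absorbed into the first, giving the stated estimate.

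The step I expect to be the main obstacle is the Perron error bound: handling the $g$ with $|g|$ very close to $x$ in the absence of a genuine short-interval Chebyshev estimate (so the remainder hypothesis $\R(x)\ll x^{\theta}$ of Axiom A must really be used to count norm values in a short interval), together with keeping all of the dependence on $x,b,\theta,A,\kappa$ explicit so that it compresses to the advertised shape $\frac{1-\theta}{(b-\theta)^{3}}(A+\kappa+\log\frac{x}{b-\theta})^{3}$; one should moreover verify whether a $\frac{x}{t_k}$-type term is genuinely unavoidable in the error or can be removed by a sharper contour. The contour shift and the estimates along $\Gamma$ and the connectors are routine once the separation of $\Gamma$ (and of the $H_k$) from the $\zeta$-zeros furnished by Lemma \ref{l:path-translates} is in hand.
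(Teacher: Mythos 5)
Your overall strategy coincides with the paper's: truncated effective Perron inversion on the line $\Re s = 1+1/\log x$, shift the contour leftward onto $\Gamma_{t_k}$ (the part of $\Gamma$ with $|\Im s|\le t_k$ closed by horizontal connectors), collect the residues $x$ (from the pole at $1$) and $-x^\rho/\rho$ (from the zeros to the right of $\Gamma$), and bound the remaining boundary integral via Lemma~\ref{l:zzpongamma-c}. The contour-shift half of your argument is exactly what the paper does.

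Where you genuinely diverge is in the Perron error estimate, and it is worth noting the two routes. You use a dyadic decomposition in $|\,x-|g|\,|$ together with the short-interval consequence $\N(y+h)-\N(y)=\kappa h + O(Ax^\theta)$ of Axiom~A (valid since $|\R(y+h)-\R(y)|\ll Ax^\theta$ for $y\asymp x$), which is conceptually transparent and elementary. The paper instead writes $d\psi(u)\le\log u\,d\N(u)$, replaces $\log u$ by $\log x$, decomposes $d\N(u)=\kappa\,du + d\R(u)$ into a main part $I$ and a remainder part $L$, and carries out a fully explicit Stieltjes-integral analysis (substitutions $v=x/u$, $w=T\log v$, integration by parts, explicit bounds \eqref{Iest}, \eqref{Lestimate}) so as to keep all constants in $A,\kappa,p,\theta,T,x$ visible. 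Both give a Perron remainder of the same order, $\ll x\cdot\mathrm{polylog}/t_k + x^\theta\cdot\mathrm{polylog}$, so your approach is a valid alternative; the paper's buys explicit constants, yours buys a shorter and more robust argument.

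The gap you flag at the end — that the $x(\log x)^2/t_k$ piece is only visibly absorbed into $O(x^b\cdot\mathrm{polylog})$ when $t_k\asymp x$ — corresponds exactly to the paper's closing line, which disposes of the parallel $x^p/t_k$ and $x/t_k$ pieces from \eqref{intGammaTfin} and \eqref{Jfinalsimpler} by asserting that $4\le t_k\le x$ implies $x/t_k\le 4x^b$. Observe, though, that this implication holds only when $t_k\ge x^{1-b}/4$, not for all $t_k\ge 4$ with $t_k<x$; for small fixed $t_k$ and large $x$ it fails. So your instinct to flag this as the weak step is well-founded: as written, neither your argument nor the paper's final inequality closes the $x/T$ gap uniformly in $4\le t_k<x$, and the estimate as stated really needs the additional restriction $t_k\gg x^{1-b}$ (or $t_k\asymp x$, as you suggest). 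With that restriction added, your proof is complete and matches the paper's conclusion.
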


\begin{proof} Analogously to the effective version of the classical
Perron formula, as given in e.g. \cite{T}, Th\'eor\`eme 2,
Chapitre II.2, p. 135, for any fixed $T,x>1$, $1<p<2$ one can
prove the effective representation
$$
\psi(x)=
\frac{1}{2\pi i} \int_{p-iT}^{p+iT} -\frac{\zeta'}{\zeta}(s)
\frac{x^s}{s}ds + O\left( x^p \sum_{g\in\G}^\infty
\frac{\Lambda(g)}{|g|^p(1+T |\log(x/|g|)|)}\right).
$$
Here we move the path of integration to the curve $\Gamma$ of Lemma
\ref{l:path-translates}: more precisely, to the curve $\Gamma_T$ consisting of
the part of $\Gamma$ in the strip $-T\leq \Im s \leq T$ joined by
the two segments $[p-iT,q-iT]$ and $[q+iT,p+iT]$, with $q\pm iT\in
\Gamma$. Obviously, the set of zeroes of $\zeta$ in the domain encircled
by $[p-iT,p+iT]$ and $\Gamma_T$ is $\Z(\Gamma,T)$.
Assuming that no zero lies on $\Gamma_T$ it follows by an application
of the residue theorem
\begin{equation}\label{psifirstform}
\psi(x)=x-\sum_{\rho \in \Z(\Gamma,T)}
\frac{x^{\rho}}{\rho}+\int_{\Gamma_T} -\frac{\zeta'}{\zeta}(s)
\frac{x^s}{s}ds + O\left(\int_1^{\infty}
\frac{x^p}{u^p(1+T|\log\frac{x}{u}|)} d\psi(u)\right).
\end{equation}
Now let us put $T=t_k (\geq 4)$ and write $a:=\frac{b+\theta}{2}\in (\theta,1)$. Since the arc length of
$\Gamma_{t_k}$ is at most $3t_k$, by an application of Lemma
\ref{l:zzpongamma-c} and using that $|1/s|\le 1/\sqrt{a^2+t^2}$ for all $s=\si+it\in\Gamma$ we find
\begin{align}\label{intGammaT}
\int_{\Gamma_T} \left|\frac{\zeta'}{\zeta}(s)\frac{x^s}{s}\right||ds|
& \ll \frac{1-\theta}{(b-\theta)^{3}} \left(\log(A+\kappa)+1+\log t_k +
\log \frac1{b-\theta}\right)^2 \left(\frac{x^p}{t_k} +
x^b \int_0^{t_k} \frac1{\sqrt{a^2+u^2}} du\right) \notag \\
&\ll \frac{1-\theta}{(b-\theta)^{3}} \left(\log(A+\kappa)+1+\log t_k +
\log \frac{1}{b-\theta}\right)^2 \left(\frac{x^p}{t_k} +
x^b \log (t_k/a) \right),
\end{align}
Note that $a=\frac{b+\theta}{2}>b-\theta$, so that $\log(t_k/a)\le \log t_k +\log\frac{1}{b-\theta}$, and that $\log(A+\kappa)+1+\log\frac{1}{b-\theta}+\log(t_k+5) \ll (A+\kappa)+\log(\frac{x}{b-\theta})$ for $t_k\ge 4$, whence we get finally
\begin{equation}\label{intGammaTfin}
\int_{\Gamma_T} \left|\frac{\zeta'}{\zeta}(s)\frac{x^s}{s}\right||ds| \ll \left((A+\kappa)+\log(\frac{x}{b-\theta}) \right)^3 \left(\frac{x^p}{t_k} +
x^b \log (t_k) \right).
\end{equation}
For the $O$-term in \eqref{psifirstform} we execute a detailed calculus here. First,
observe that by the definition of $\Lambda(g)$ we have
$d\psi(u)\leq \log u ~d{\N}(u)$, (the inequality interpreted as
between positive measures), whence it suffices to treat
\begin{equation}\label{Jdef}
J:=J(x,p,T):=x^p \int_1^{\infty}
\frac{\log u}{u^p(1+T|\log(x/u)|)} d\N(u).
\end{equation}
First we approximate $\log u$ in the integral by $\log x$,
with an error
\begin{equation}\label{logxuerror} \notag
\int_1^{\infty} \frac{|\log (x/u)|}{u^p(1+T|\log(x/u)|)} d\N(u)
\leq \int_1^{\infty} \frac{1}{Tu^p} d\N(u)=\frac{1}{T}\zeta(p)\leq
\frac{A+\kappa}{T}\frac{p}{p-1} ,
\end{equation}
invoking \eqref{zsintheright} in the last step. Writing in
$\N(u)=\kappa u + \R (u)$, we thus find
\begin{equation}\label{Japprox}
\left| J -\kappa \log x I - x^p \log x L\right| \leq
\frac{2(A+\kappa)}{(p-1)T}x^p,
\end{equation}
where
\begin{equation}\label{Idef}
I:= \int_1^\infty \left(\frac{x}{u}\right)^p \frac{du}{1+T|\log(x/u)|}
\end{equation}
and
\begin{equation}\label{Ldef}
L:= \int_1^\infty\frac1{u^p}
\frac{d\R(u)}{1+T|\log(x/u)|}.
\end{equation}
Cutting the interval of integration at $x$, we denote $I'$ and
$L'$ the integrals on $[1,x]$, and $I"$, $L"$ the integrals on
$[x,\infty]$. On the interval $[1,x]$ the absolute value is $\log(x/u)$, and conversely, on $[x,\infty]$ it is $\log(u/x)$.
On both parts, the integrand is a nice, continuously differentiable
function. Substituting $v:=x/u$ and then $w:=T\log v$ we obtain
$$
I'=\int_1^x \frac{xv^{p-2}dv}{1+T\log v}=\frac{x}{T} \int_0^{T\log
x}\frac{e^{\al w}dw}{1+w}\qquad\textrm{with}\quad \al:=\frac{p-1}{T}
$$
Estimating the exponential by $e$ up to $1/\al-1$, the part until
$1/\al-1$ is at most $e\log(1/\al)$. The rest is at most
$$
\int_{1/\al-1}^{T\log x} \al e^{\al w}dw < e^{\al T\log x}=x^{\al
T},
$$
whence
\begin{equation}\label{Iprime}
I'\leq  \frac{e x\left(\log T+\log\frac{1}{p-1}\right)}{T} + \frac{x^p}{T}
\end{equation}
To estimate $I"$ we substitute first $v:=u/x$, and second
$w:=T\log v$ to find
$$
I" =x \int_1^{\infty} \frac{dv}{v^p(1+T\log v)} = \frac{x}{T}
\int_0^{\infty} \frac{e^{-\al w} dw}{1+w}.
$$
with the very same $\al$. Here we can calculate further as
\begin{align}\label{alphacalc}
\int_0^{\infty} \frac{e^{-\al w} dw}{1+w}
& =\left[
\frac{-e^{-\al w}}{\al (1+w)} \right]_0^{\infty} -\frac{1}{\al}
\int_0^{\infty} \frac{e^{-\al w} dw}{(1+w)^2} = \frac{1}{\al}
\int_0^{\infty} \frac{1-e^{-\al w} }{(1+w)^2}dw \notag \\&
\leq \frac{1}{\al} \int_0^{\infty} \frac{\min(1,\al w)}{(1+w)^2}dw =
\int_0^{1/\al} \frac{w ~dw}{(1+w)^2} + \frac{1}{\al}
\int_{1/\al}^{\infty} \frac{dw}{(1+w)^2} \notag \\& \leq
\log(1+1/\al) +\frac{1}{1+\al} \leq \log(1/\al) + 2
\\ &= \log T +\log \frac{1}{p-1} + 2. \notag
\end{align}
So in all we obtain
\begin{equation}\label{Iest}
I\ll \frac{x^p + x\log T+x\log\frac{1}{p-1}}{T}.
\end{equation}
In case of $L$ we must pay some attention to the possible sign
changes of $\R$, although the order of this term is in general
smaller. Now
\begin{align*}
L'& =\left[ \frac{\R(u)}{u^p(1+T\log(x/u))}\right]_1^x + \int_1^x
\frac{\R(u)\left({p-T(1+T\log(x/u))^{-1}}\right)}{u^{p+1}(1+T\log(x/u))}
du
\end{align*}
so
\begin{equation}\label{Lprime1}
\left| L'-\frac{\R(x)}{x^p} \right| \leq
\int_1^x\frac{|\R(u)|\left({p(1+T\log(x/u))+T}\right)}
{u^{p+1}(1+T\log(x/u))^2} du .
\end{equation}
Writing in $|\R(u)|\leq Au^\theta$ and substituting $v:=x/u$ we
obtain\footnote{To derive the last line from the last but one, we
calculate the integral as follows. $\int_T^{T\log x}
\frac{e^{\beta\xi}}{\xi}d\xi = \left[ \frac{e^{\beta\xi}}{\beta
\xi} \right]_T^{T\log x} + \int_T^{T\log x}
\frac{e^{\beta\xi}}{\beta\xi^2}d\xi < \frac{e^{(p-\theta)\log
x}}{(p-\theta)\log x} + \int_{p-\theta}^{(p-\theta)\log x}
\frac{e^vdv}{v^2}$. If $p-\theta <1$, the part of the last
integral below $v=1$ does not exceed $e/(p-\theta)<e/(p-1)$.
Here we distinguish two cases, according to $(p-\theta)\log x\geq
1$ or not. In the first case the integral for $1\leq v \leq
(p-\theta)\log x$ is at most $e^{(p-\theta)\log
x}\int_1^{(p-\theta)\log x} \frac{dv}{v^2}<x^{p-\theta}$, and also
$\frac{e^{(p-\theta)\log x}}{(p-\theta)\log x}\leq x^{p-\theta}$,
which yields the estimate $\ll 1/(p-1)+x^{p-\theta}$. On the other
hand for $(p-\theta)\log x<1$ we have $\frac{e^{(p-\theta)\log
x}}{(p-\theta)\log x} < \frac{e}{(p-1)\log 4} \ll 1/(p-1)$ and the
overall estimate is of the same order.}
\begin{align}\label{Lprime}
\left| L'-\frac{\R(x)}{x^p} \right| & \leq
x^{\theta-p-1} \int_1^x\frac{Av^{1+p-\theta}\left({2(1+T\log v)+T}\right)}
{(1+T\log v)^2} \frac{x dv}{v^2} \notag  \\
& \leq 2AT x^{\theta-p} \int_1^x\frac{v^{p-\theta}(1+\log v)}
{(1+T\log v)^2} \frac{dv}{v} \notag \\& = 2AT x^{\theta-p} \int_0^{\log x}
e^{(p-\theta)w} \frac{1+w}{(1+Tw)^2} dw \notag \\
& = 2 AT x^{\theta-p} \left( \int_0^1 +\int_1^{\log x} \right)
\\ & \ll
AT x^{\theta-p}\int_0^1 \frac{dw}{(1+Tw)^2} + A x^{\theta-p} \int_1^{\log x} \frac{e^{(p-\theta)w}}{1+Tw}
dw \notag \\
& = Ax^{\theta-p} \left(1 -\frac{1}{T+1}+\frac1T \int_T^{T\log x}
\frac{e^{\beta \xi}}{1+\xi} d\xi\right) \qquad \left(\textrm{with}
~\beta:=\frac{p-\theta}{T}\right) \notag \\ & \ll Ax^{\theta-p}
\left(1+\frac{1}{(p-1)T} + \frac{x^{p-\theta}}{T}\right) =
Ax^{\theta-p}\left(1+\frac{1}{(p-1)T} \right) + \frac{A}{T}
\notag.
\end{align}
Similarly, integration by parts gives
\begin{align*}
L"& =\left[ \frac{\R(u)}{u^p(1+T\log(u/x))}\right]_x^{\infty} +
\int_x^{\infty}
\frac{\R(u)\left({p+T(1+T\log(u/x))^{-1}}\right)}{u^{p+1}(1+T\log(u/x))}
du
\end{align*}
whence with the very parameter $\beta$
\begin{align}\label{L2prime}
\left| L" + \frac{\R(x)}{x^p} \right| & \leq A \int_x^{\infty}
\frac{u^{\theta-p-1}\left|{T+p(1+T\log(u/x))}\right|}
{(1+T\log(u/x))^2} du \notag \\ &= A x^{\theta-p} \int_1^{\infty}
\frac{v^{\theta-p-1}\left|{T+p(1+T\log v)}\right|} {(1+T\log v)^2}
dv \notag \\ &\leq A x^{\theta-p}  \int_0^{\infty} \frac{e^{-\beta
w}p \left({T+1+w}\right)} {(1+w)^2} \frac{dw}{T} \notag \\ & \leq
2 A x^{\theta-p} \int_0^{\infty} {e^{-\beta
w}}\left(\frac1{T(1+w)}
+\frac{1}{(1+w)^2}\right) {dw} \notag \\
& \leq 2 A x^{\theta-p} \left( \frac1T \log\frac{T}{p-\theta} +
\frac 2T +1 \right)
\end{align}
estimating the integral as in \eqref{alphacalc}.

Adding \eqref{Lprime} and \eqref{L2prime} we thus arrive at
\begin{equation}\label{Lestimate}
|L| \ll \frac{A}{T} + A x^{\theta-p}\left(1+\frac{1}{(p-1)T} +
\frac{\log\frac{T}{p-1}}{T}\right).
\end{equation}
Taking into account \eqref{Iest} and \eqref{Lestimate} in
\eqref{Japprox}, we are led to
\begin{align}\label{Jfinal}
J & \ll \frac{(A+\kappa)x^p}{(p-1)T} + \kappa \log x \frac{x^p +
x\log \frac{T}{p-1}}{T} + A x^p \log x \left( \frac{1}{T} +
x^{\theta-p}+ \frac{x^{\theta-p}}{(p-1)T} + \frac{
x^{\theta-p}\log\frac{T}{p-1}}{T}\right)\notag \\ & \ll (A+\kappa)
\left( \frac{x^p \log x}{T} + \frac{x^p+x^\theta\log x}{(p-1)T}+ x \log x
\frac{\log\frac{T}{p-1}}{T} + x^\theta\log x \right).
\end{align}
Let us take now $p:=1+1/\log x$ and apply the above for $T=t_k\ge 4$ and $x\ge t_k$ to get
\begin{equation}\label{Jfinalsimpler}
J \ll \left((A+\kappa)+\log x + \log\frac{1}{b-\theta}\right)^3 \left(\frac{x}{t_k}+x^\theta\right).
\end{equation}

Finally, using the estimates of \eqref{intGammaTfin} (applied for $T:=t_k$)
and \eqref{Jfinalsimpler} in \eqref{psifirstform}, and taking into account that $4\le t_k\le x$ implies $x/t_k\le 4x^b$, the
assertion of the Theorem follows.
\end{proof}

\section*{Acknowledgements}
Supported in part by Hungarian National Research, Technology and Innovation Office Grant \# T-72731,
T-049301, K-61908, K-81658, K-100461, K-104183, K-109789, K-119528, and K-132097 by the Hungarian-French Scientific and Technological Governmental Cooperation, Project \# T\'ET-F-10/04 and the Hungarian-German Scientific and Technological Governmental Cooperation, Project \# TEMPUS-DAAD \# 308015.


\end{document}